\newtheorem{theo}{Theorem}[section]
\newtheorem{prop}[theo]{Proposition}
\newtheorem{lemm}[theo]{Lemma}
\newtheorem{cor}[theo]{Corollary}
\newtheorem{prob}[theo]{Problem}
\newtheorem{conj}[theo]{Conjecture}
\theoremstyle{definition}
\newtheorem{defi}[theo]{Definition}
\newtheorem{ex}[theo]{Example}
\newtheorem{step}[theo]{Observation}
\theoremstyle{remark}
\newtheorem{rem}[theo]{Remark}
\newcommand{\Ker}[0]{\operatorname{Ker}}
\newcommand{\Image}[0]{\operatorname{Im}}
\newcommand{\Supp}[0]{\operatorname{Supp}}
\newcommand{\deldel}{\sqrt{-1}\partial \overline{\partial}}
\newcommand{\dbar}{\overline{\partial}}
\newcommand{\e}{\varepsilon}
\newcommand{\ome}{\widetilde{\omega}}
\newcommand{\I}[1]{\mathcal{I}(#1)}
\newcommand{\OX}{\mathcal{O}}
\newcommand{\lla}[0]{{\langle\!\hspace{0.02cm} \!\langle}}
\newcommand{\rra}[0]{{\rangle\!\hspace{0.02cm}\!\rangle}}
\begin{document}

\title[]
{A transcendental approach to \\ injectivity theorem for log canonical pairs}

\author{Shin-ichi MATSUMURA}
\address{Mathematical Institute, Tohoku University, 
6-3, Aramaki Aza-Aoba, Aoba-ku, Sendai 980-8578, Japan.}
\email{{\tt mshinichi@m.tohoku.ac.jp, mshinichi0@gmail.com}}
\date{\today, version 0.01}
\renewcommand{\subjclassname}{%
\textup{2010} Mathematics Subject Classification}
\subjclass[2010]{Primary 32J25, Secondary 14F17, 51H30.}
\keywords
{Injectivity theorems, 
Hard Lefschetz theorems, 
Log canonical singularities, 
Kawamata log terminal singularities,
Singular hermitian metrics, 
Multiplier ideal sheaves, 
The theory of harmonic integrals, 
$L^{2}$-methods, 
$\dbar$-equations.}

\maketitle

\begin{abstract}
In this paper, 
we study transcendental aspects of the cohomology groups of 
adjoint bundles of log canonical pairs, 
aiming to establish an analytic theory for log canonical singularities. 
As a result, in the case of purely log terminal pairs, 
we give an analytic proof of the injectivity theorem 
originally proved by the Hodge theory. 
Our method is based on the theory of harmonic integrals  
and the $L^2$-method for the $\dbar$-equation, 
and it enables us to generalize the injectivity theorem 
to the complex analytic setting. 
\end{abstract}


\section{Introduction}\label{Sec1}

In the study of the vanishing theorem, 
analytic methods and algebraic geometric methods 
have been nourishing each other in the last decades. 
The injectivity theorem is one of the most important generalizations 
of the Kodaira vanishing theorem and plays an important role 
when we study fundamental problems in higher dimensional algebraic geometry. 
We recently obtained satisfactory injectivity theorems formulated by multiplier ideal sheaves 
and their applications to vanishing theorems and extension problems of (holomorphic) sections 
from the analytic viewpoint (see \cite{FM16}, \cite{GM13}, and \cite{Mat16}). 
Our results can be seen as an analytic theory of the injectivity theorem 
for kawamata log terminal ({\textit{klt}}, for short) singularities. 
The next interesting problem is the study of the injectivity theorem  
for log canonical ({\textit{lc}}, for short) singularities.

The following result is an injectivity theorem for lc pairs, 
whose proof heavily depends on the Hodge theory 
(see \cite{Amb03}, \cite{Amb14}, \cite{EV92}, \cite[Section 6]{Fuj11}, \cite{Fuj12b}, and \cite{Fuj13b} for the Hodge theoretic viewpoint). 
It is one of the most important problems in complex geometry 
to establish an analytic theory for lc singularities. 
In this paper, 
we study a transcendental approach to 
the following result and lc singularities.

\begin{theo}\label{Fujino}
Let $D$ be a simple normal crossing divisor 
on a smooth projective variety $X$ 
and $F$ be a semi-ample line bundle on $X$. 
Let $s$ be a $($holomorphic$)$ section of a positive multiple $F^m$ 
such that the zero locus $s^{-1}(0)$ 
contains no lc centers of the lc pair $(X,D)$. 
Then, the multiplication map induced by the tensor product with $s$ 
\begin{equation*}
H^{q}(X, K_{X} \otimes D \otimes F) 
\xrightarrow{\otimes s} 
H^{q}(X, K_{X} \otimes D \otimes F^{m+1} )
\end{equation*}
is injective for every $q$. 
Here $K_{X}$ denotes the canonical bundle of $X$. 
\end{theo}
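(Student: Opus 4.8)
The plan is to realize the cohomology classes by $L^{2}$ harmonic forms on the quasi-projective manifold $Y := X \setminus \mathrm{Supp}\,D$ and to combine Enoki's method, which exploits the semipositivity coming from $F$, with a residue analysis along the strata of $D$, which encodes the log canonical structure. First I would use the semi-ampleness of $F$ to fix a smooth metric $h_{F}$ on $F$ with semipositive curvature $\sqrt{-1}\Theta_{h_{F}} \geq 0$, and equip $Y$ with a complete K\"ahler metric $\omega$ that is asymptotically of Poincar\'e type along $D$. Completeness of $\omega$ is what will license the harmonic theory, the integration by parts, and the density of compactly supported forms on the non-compact $Y$.

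With these metrics fixed, the semipositive part of the argument is Enoki's: if $u$ is an $L^{2}$ harmonic $(n,q)$-form valued in $(F,h_{F})$, then $\dbar(su) = s\,\dbar u = 0$ because $s$ is holomorphic, and the Bochner--Kodaira--Nakano identity together with $\sqrt{-1}\Theta_{h_{F}}\geq 0$ forces $\dbar^{*}(su)=0$ as well, so that $su$ is again harmonic, now valued in $F^{m+1}$ with the metric $h_{F}^{m+1}$. On a compact smooth manifold this immediately yields injectivity: if $\otimes s$ kills the class of $u$, then $su$ is $\dbar$-exact and harmonic, hence identically zero, and since $s \not\equiv 0$ we conclude $u \equiv 0$. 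On the non-compact $Y$ I would implement this via cutoff functions adapted to $\omega$, controlling the error terms by completeness, and I would treat the merely semipositive (as opposed to strictly positive) curvature of the semi-ample $F$ by a regularization or limiting argument.

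The genuine difficulty is that the argument above only sees $K_{X}\otimes F$: the line bundle $\mathcal{O}_X(D)$ carried by the adjoint (log canonical) bundle $K_{X}\otimes\mathcal{O}_X(D)$ is in general not semipositive, so it cannot be absorbed into Enoki's curvature term, and the naive $L^{2}$ theory with respect to a Poincar\'e metric records only holomorphic forms without poles along $D$. To recover the logarithmic poles -- equivalently, to pass from $K_{X}\otimes F$ to $K_{X}\otimes\mathcal{O}_X(D)\otimes F$ -- I would stratify $D$ by its lc centers (the components of $D$ and all their intersections) and use the Poincar\'e residue short exact sequences to set up an induction on the dimension of the strata. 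On each stratum $Z$ the residue of a harmonic form is itself a harmonic object on $Z$ valued in $K_{Z}\otimes F|_{Z}$, and the hypothesis that $s^{-1}(0)$ contains no lc center guarantees that $s|_{Z}\not\equiv 0$; this is exactly what is needed to propagate the vanishing of $su$ to the vanishing of $u$ and of all its residues, and to make the inductive injectivity on each $Z$ go through.

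The step I expect to be the main obstacle is reconciling these two mechanisms: showing that the $L^{2}$ harmonic representatives are compatible with the residue filtration and that multiplication by $s$ respects it, at the borderline boundary coefficient $1$ that characterizes the lc (rather than klt) case. This compatibility is the transcendental counterpart of the $E_{1}$-degeneration and strictness that drive the Hodge-theoretic proof, and carrying it out analytically -- while simultaneously managing the non-compactness of $Y$ and the absence of strict positivity -- is where the real work lies.
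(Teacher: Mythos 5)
Your proposal does not follow the paper's route, and it contains a genuine gap at exactly the point you yourself flag as ``where the real work lies.'' A point of orientation first: the paper does not give an analytic proof of Theorem \ref{Fujino} in full lc generality. That statement is quoted as a known Hodge-theoretic result; what the paper proves analytically is only the plt case (Theorem \ref{main}), and the general semi-positive/lc case is left open as Conjecture \ref{F-conj}. So your plan, if it worked, would prove strictly more than the paper does --- which is a warning that the sketch is deferring the hard part rather than solving it.

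The gap is the strata induction. It requires two things that are never established: (i) that $L^2$-harmonic representatives with respect to your Poincar\'e-type metric admit well-defined residues on each stratum which are again harmonic, and that $\otimes s$ respects this structure; and (ii) that injectivity on all strata, together with injectivity on the ``no-pole'' part $K_X\otimes F$, can be assembled into injectivity for $K_X\otimes D\otimes F$. Point (i) is the analytic counterpart of $E_1$-degeneration and is precisely what nobody knows how to do transcendentally; you name it but give no argument. Point (ii) fails structurally once components of $D$ intersect: from injectivity of the multiplication maps $f_i$ on the components $D_i$ one gets $r_i(\alpha)=0$ for every $i$, but one cannot conclude $r_D(\alpha)=0$ (vanishing of the restriction to the non-normal variety $D$), because $H^q(D,\cdot)$ is not the direct sum of the $H^q(D_i,\cdot)$ when the $D_i$ meet; this is exactly the obstruction recorded in the paper's Observation \ref{toward-conj}. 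The paper circumvents it only in the plt case, where $D_i\cap D_j=\emptyset$ forces the direct-sum decomposition, and the remaining step --- injectivity of $\otimes s$ on $\Image\,\Phi_D$ --- is the paper's key analytic result (Theorem \ref{key}), proved not with Poincar\'e metrics and residues but with the regularized metrics $g_{\e}=g/(|t|^2_g+\e)$ on $\mathcal{O}_X(D)$, a refined hard Lefschetz theorem for singular metrics (Theorem \ref{Lef}), and the divergence estimates $\|v\|_{g_{\e}}=O(-\log\e)$ and $\|\dbar^*_{g_{\e}}(su)\|_{g_{\e}}=O(\e(-\log\e))$. Your sketch contains no substitute for these estimates: the hypothesis $s\alpha=0$ produces a $\dbar$-primitive $v$ only in the space with log poles along $D$, where the curvature of $\mathcal{O}_X(D)$ is not semi-positive, so Enoki's Bochner argument on $(K_X\otimes F)$-valued forms never interacts with the equation $su=\dbar v$; bridging these two spaces is the entire content of the problem, and the residue compatibility you appeal to is assumed, not proved.
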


Let $D$ be a simple normal crossing divisor on a smooth variety $X$ 
and let $D = \sum_{i \in I} D_{i}$ be the irreducible decomposition of $D$. 
Then an irreducible component of $D_{i_{1}}\cap \dots \cap D_{i_{k}}$ 
is called an {\textit{lc center}} of the pair $(X,D)$. 
Note that we interchangeably use the words 
\lq \lq (Cartier) divisors", \lq \lq (holomorphic) line bundles", 
and \lq \lq invertible sheaves".

The celebrated injectivity theorem proved by Koll\'ar (see \cite{Kol86a}) 
is the special case of $D=0$ of the above result, 
and thus the above result can be seen as a generalization of 
Koll\'ar's injectivity theorem to lc pairs. 
On the other hand, 
Enoki gave an analytic proof of Koll\'ar's injectivity theorem 
and generalized it to semi-positive line bundles 
on compact K\"ahler manifolds (see \cite{Eno90}). 
Here a (holomorphic) line bundle $F$ is said to be 
{\textit{semi-positive}} if 
$F$ admits a smooth hermitian metric $h$ 
such that the (Chern) curvature $\sqrt{-1}\Theta_{h}(F)$ 
is a semi-positive $(1,1)$-form.  
We remark that 
semi-ample line bundles are always semi-positive, but the converse is not true.
In summary, we have the following diagram\,:
\vspace{0.2cm}
\[\xymatrixcolsep{4pc}\xymatrix{
\hspace{-1.5cm} \text{$
\begin{array}{cl}
\text{Koll\'ar's injectivity theorem} \\
\left \{\hspace{-0.5cm} 
\begin{array}{cl}
&\text{assumption: semi-ample}\\
&\text{method: Hodge theory} 
\end{array}
\right.
\end{array}
$}
\ar[d]^-{\text{complex analytic setting}} \ar[r]^-{\text{lc pairs}} &
\text{$\begin{array}{cl}
\text{Theorem \ref{Fujino}}\\
\left \{\hspace{-0.5cm} 
\begin{array}{cl}
&\text{assumption: semi-ample}  \\
&\text{method: Hodge theory} 
\end{array}
\right.
\end{array}
$}
\ar[d]^-{\text{complex analytic setting}}\\
\hspace{-1.5cm} 
\text{$
\begin{array}{cl}
\text{Enoki's injectivity theorem}\\
\left \{\hspace{-0.5cm} 
\begin{array}{cl}
&\text{assumption: semi-positive}\\
&\text{method: harmonic integrals} 
\end{array}
\right.
\end{array}
$} \ar[r]^-{\text{lc pairs}}
&\text{$
\begin{array}{cl}
\text{{\bf{Conjecture \ref{F-conj}}}}.\\
\end{array}
$}}
\]
\vspace{0.3cm}

Therefore it is natural to ask 
whether we can give an analytic proof of Theorem \ref{Fujino} and 
generalize Theorem \ref{Fujino} to the complex analytic setting. 
In this paper, we study the following conjecture posed in \cite{Fuj15b}. 
By using higher direct image sheaves under projective (or K\"ahler) morphisms, 
we can formulate a relative version of the injectivity theorem. 
We have already obtained a relative version of Enoki's injectivity theorem in \cite{Tak95} and  
its generalization for klt singularities in \cite{Mat16b}. 
A relative version of Theorem \ref{Fujino} is one of the important open problems 
on vanishing theorems in the minimal model program (see \cite[Problem 1.8]{Fuj13a}). 
The following conjecture is a first step to consider \cite[Problem 1.8]{Fuj13a} 
from the analytic viewpoint.

\begin{conj}[{\cite[Conjecture 2.21]{Fuj15b}, cf. \cite[Problem 1.8]{Fuj13a}}] 
\label{F-conj}
Let $D$ be a simple normal crossing divisor on a compact K\"ahler manifold $X$ 
and $F$ be a semi-positive line bundle on $X$. 
Let $s$ be a $($holomorphic$)$ section of a positive multiple $F^m$ 
such that the zero locus $s^{-1}(0)$ 
contains no lc centers of the lc pair $(X,D)$. 
Then, the multiplication map induced by the tensor product with $s$ 
\begin{equation*}
H^{q}(X, K_{X} \otimes D \otimes F) 
\xrightarrow{\otimes s} 
H^{q}(X, K_{X} \otimes D \otimes F^{m+1} )
\end{equation*}
is injective for every $q$. 
\end{conj}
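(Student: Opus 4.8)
The plan is to run Enoki's harmonic-theoretic proof of injectivity on the open manifold $X^{\circ} := X \setminus \Supp D$, realizing the cohomology of the log canonical adjoint bundle as a space of $L^{2}$-harmonic forms there. First I would fix a K\"ahler form $\omega$ on $X$, the given semi-positive smooth metric $h$ on $F$, and a \emph{complete} K\"ahler metric $\ome$ on $X^{\circ}$ of Poincar\'e type along $D$; on $\OX(D)$ I would install a metric adapted to the defining section $s_{D}$ of $D$. The first step is to establish the identification
\[
H^{q}(X, K_{X} \otimes D \otimes F) \cong \mathbb{H}^{n,q}_{(2)}(X^{\circ}, F),
\]
the space of $F$-valued $L^{2}$-harmonic $(n,q)$-forms on $(X^{\circ}, \ome, h)$, so that every cohomology class acquires a harmonic representative $u$ satisfying $\dbar u = \dbar^{*} u = 0$. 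Completeness of $\ome$ guarantees that the $\dbar$-Laplacian is essentially self-adjoint, which is what makes the Hodge-theoretic formalism available on the non-compact $X^{\circ}$.

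The second step is the curvature computation. Since $(F,h)$ is semi-positive and $u$ has bidegree $(n,q)$, the Bochner--Kodaira--Nakano identity forces both the vanishing of the covariant $(1,0)$-derivative of $u$ and the relation $[\sqrt{-1}\,\Theta_{h}(F), \Lambda]\, u = 0$ for the curvature term. Following Enoki (see \cite{Eno90}), this implies that multiplication by the holomorphic section $s$ preserves harmonicity: because $|s|_{h^{m}}$ is bounded on the compact $X$, the form $su$ lies in $L^{2}$, is manifestly $\dbar$-closed, and the curvature identity yields $\dbar^{*}(su)=0$, so $su$ is again $L^{2}$-harmonic for the metric $h^{m+1}$ on $F^{m+1}$, whose curvature $(m+1)\sqrt{-1}\,\Theta_{h}(F)$ remains semi-positive. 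Granting this, injectivity is immediate: if $\otimes s$ kills the class of $u$, then $su$ is simultaneously harmonic and $\dbar$-exact, hence $su \equiv 0$ by orthogonality in the $L^{2}$-Hodge decomposition; as $s \not\equiv 0$ and its zero locus meets no lc center, $u$ vanishes off $s^{-1}(0)$ and therefore on all of $X^{\circ}$, so the original class is zero.

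To reach the borderline \emph{lc} situation (rather than the klt one treated analytically in \cite{Mat16}) I would regularize. Replace the metric on $\OX(D)$ by a family $h_{\e}$ with weight $(1-\e)\log|s_{D}|^{2}$, whose multiplier ideal is the trivial ideal $\OX$ for every $\e > 0$, and simultaneously perturb $h$ so that its curvature becomes strictly positive; the latter restores closed range of $\dbar$ and hence the Hodge decomposition in the presence of merely semi-positive, rather than positive, curvature. For each $\e > 0$ this places us in the klt regime, where the argument of the previous two paragraphs applies verbatim and produces harmonic representatives $u_{\e}$ of a fixed class. One then wishes to pass to the limit $\e \to 0$.

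The hard part will be exactly this limit. As $\e \to 0$ the curvature of $\OX(D)$ concentrates into the current of integration $[D]$ and the multiplier ideal jumps discontinuously from $\OX$ to $\OX(-D)$, so a priori the limiting $L^{2}$-space computes the \emph{smaller} cohomology $H^{q}(X, K_{X} \otimes F)$ in place of $H^{q}(X, K_{X} \otimes D \otimes F)$. Recovering the full log canonical adjoint bundle in the degeneration is precisely the phenomenon that the Hodge-theoretic proof of Theorem \ref{Fujino} dispatches through strictness of the weight filtration, and for which no transcendental substitute is yet available in the semi-positive K\"ahler setting. Producing a uniform $L^{2}$-estimate that survives the degeneration --- equivalently, controlling the residue (boundary) behaviour of $u_{\e}$ along every stratum of lc centers, where the hypothesis that $s^{-1}(0)$ contains no lc center must be fed in to force $u \equiv 0$ on those strata --- is the main obstacle, and is what confines an unconditional result to the purely log terminal case.
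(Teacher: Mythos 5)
You were asked to prove a statement that the paper itself records as a \emph{conjecture} and does not prove: the paper only settles the plt case (Corollary \ref{main-co}), by combining Theorem \ref{key} with a component-by-component reduction that applies Enoki's injectivity theorem on each $D_{i}$ (this is exactly where $D_{i}\cap D_{j}=\emptyset$ is used). So your closing admission --- that the limit $\e\to 0$ is an unresolved obstacle confining any unconditional result to the plt case --- is consistent with the paper's own assessment, and your diagnosis that the multiplier ideal jumps from $\OX_{X}$ to $\OX_{X}(-D)$, so that the naive limiting $L^{2}$-space computes the smaller group $H^{q}(X,K_{X}\otimes F)$, is precisely the phenomenon isolated by Theorem \ref{key}, which proves injectivity only on $\Image\,\Phi_{D}$, i.e. on classes coming from $H^{q}(X,K_{X}\otimes F)$.

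However, even as a sketch of a partial result your first two steps contain a genuine inconsistency. For $(n,0)$-forms the $L^{2}$-condition does not depend on the choice of the positive form $\ome$ on $X^{\circ}$ (last bullet of Lemma \ref{nq}), so the identification in your step 1 is governed entirely by the metric you place on $\OX_{X}(D)$, and neither available choice does what you need. If you take the singular metric $h_{D}=1/|t|^{2}$, which is flat on $X^{\circ}$, then the curvature term in Proposition \ref{BKN} involves only $\sqrt{-1}\Theta_{h}(F)\geq 0$ and Enoki's argument runs, but the $L^{2}$-complex computes $H^{q}(X,K_{X}\otimes D\otimes F\otimes\I{h_{D}})=H^{q}(X,K_{X}\otimes F)$, not the desired group. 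If instead you take a Poincar\'e-type metric $g/(|t|_{g}^{2}\log^{2}|t|_{g}^{2})$ on $\OX_{X}(D)$, whose multiplier ideal is trivial so that the right group $H^{q}(X,K_{X}\otimes D\otimes F)$ is realized, then its curvature on $X^{\circ}$ equals $-2\,\deldel\bigl(-\log\log(1/|t|_{g}^{2})\bigr)$, a \emph{negative} term of the same size as the Poincar\'e part of $\ome$; the curvature term in the Bochner identity then has no sign, and harmonicity of $u$ no longer yields $\dbar^{*}(su)=0$. Your step 2, which mentions only $\Theta_{h}(F)$, silently assumes the first choice of metric while claiming the cohomological output of the second. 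A separate error: your proposed fix for closed range --- ``perturb $h$ so that its curvature becomes strictly positive'' --- is impossible, since a merely semi-positive line bundle on a compact K\"ahler manifold admits no metric of strictly positive curvature unless it is ample; the paper obtains closed range instead from Proposition \ref{decomp}, via the De Rham--Weil isomorphism and complete metrics with bounded local potentials. Finally, note that the paper's proof of the plt case does supply exactly the uniform control of the degeneration $g_{\e}=g/(|t|_{g}^{2}+\e)\to h_{D}$ whose absence you flag as the main obstacle: Theorem \ref{Lef} makes $\ast u$ holomorphic, and the estimates $\|v\|_{g_{\e}}=O(-\log\e)$ and $\|\dbar^{*}_{g_{\e}}(su)\|_{g_{\e}}=O(\e(-\log\e))$ force $su=0$. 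What remains open in the general lc case is not these estimates but the gluing step: deducing $r_{D}(\alpha)=0$ from $r_{i}(\alpha)=0$ when the components $D_{i}$ intersect (see Observation \ref{toward-conj}).
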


The following theorem, which is one of the main results of this paper, 
can be seen as a generalization of Enoki's result to 
purely log terminal ({\textit{plt}}, for short) pairs. 
See \cite{Ohs04} and \cite{Fuj12a} for the formulation of Theorem \ref{main} 
and see \cite{FM16} for applications of this formulation.  
The proof of Theorem \ref{main}, 
which is based on the theory of harmonic integrals and the $L^2$-method, 
provides an analytic method to study lc singularities. 
As a corollary of Theorem \ref{main}, 
we obtain a partial answer for Conjecture \ref{F-conj}, 
which completely solves Conjecture \ref{F-conj} in the case of plt pairs. 

\begin{theo}[Main Theorem]\label{main}
Let $D$ be a simple normal crossing divisor on a compact K\"ahler manifold $X$. 
Let $F$ $($resp. $M$$)$ be a $($holomorphic$)$ line bundle on $X$ 
with a smooth hermitian metric $h_{F}$  $($resp. $h_{M}$$)$ 
such that 
\begin{equation*}
\sqrt{-1}\Theta_{h_M}(M)\geq 0 \  \text{ and }\ 
\sqrt{-1}(\Theta_{h_F}(F)-t \Theta 
_{h_M}(M))\geq 0 
\ \text{ for some }t>0. 
\end{equation*}
We assume that the pair $(X,D)$ is a plt pair 
$($that is, $D_{i} \cap D_{j} = \emptyset$ for $i \not = j$ 
for the irreducible decomposition $D = \sum_{i\in I}D_{i}$$)$. 
Let $s$ be a $($holomorphic$)$ section of $M$  
such that the zero locus $s^{-1}(0)$ 
contains no lc centers of the lc pair $(X,D)$.
Then, the multiplication map induced by the tensor product with $s$
\begin{equation*}
H^q(X, K_X\otimes D \otimes F)
\xrightarrow{\otimes s } 
H^q(X, K_X\otimes D \otimes F\otimes M)
\end{equation*} 
is injective for every $q$. 
\end{theo}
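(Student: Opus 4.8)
The plan is to prove the injectivity by the method of harmonic integrals, adapting Enoki's proof of Koll\'ar's injectivity theorem to the setting of plt pairs. First I would set up the $L^{2}$-framework on the Zariski-open set $X \setminus D$. Since $(X,D)$ is plt, the irreducible components $D_{i}$ are disjoint and smooth, so near each $D_{i}$ the local picture is a single smooth divisor $\{z_{1}=0\}$; this disjointness is what makes the analysis near $D$ tractable, and it also shows that the lc centers are exactly the $D_{i}$, so the hypothesis on $s^{-1}(0)$ means precisely that $s$ vanishes identically on no component $D_{i}$. On $X \setminus D$ I would introduce a complete K\"ahler metric $\ome_{\e}$ (for instance $\omega + \e\,\omega_{P}$ with $\omega_{P}$ a Poincar\'e-type metric with logarithmic poles along $D$), together with an appropriate singular hermitian metric on $\OX(D)$ encoding the logarithmic poles along $D$. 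With these choices the associated $L^{2}$-Dolbeault isomorphism identifies $H^{q}(X, K_{X}\otimes D\otimes F)$ with the space of $L^{2}$-harmonic $(n,q)$-forms for the adjoint bundle, so that cohomology classes acquire canonical harmonic representatives.

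The curvature assumptions give $\sqrt{-1}\Theta_{h_{F}}(F) \geq t\,\sqrt{-1}\Theta_{h_{M}}(M) \geq 0$, so in particular $F$ is semi-positive. The second step is to invoke Enoki's observation: for semi-positive $(F,h_{F})$, every $L^{2}$-harmonic $(n,q)$-form $u$ satisfies the pointwise identity $\sqrt{-1}\Theta_{h_{F}}(F)\wedge \Lambda_{\omega} u = 0$. Indeed, applying the Bochner--Kodaira--Nakano identity to a form annihilated by both $\dbar$ and $\dbar^{*}$ shows that the nonnegative curvature term $\lla \sqrt{-1}\Theta_{h_{F}}(F)\wedge \Lambda_{\omega} u, u \rra$ has vanishing integral, which together with semi-positivity forces $\sqrt{-1}\Theta_{h_{F}}(F)\wedge \Lambda_{\omega} u = 0$ pointwise.

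The third step is the multiplication argument. Given a class in $H^{q}(X, K_{X}\otimes D\otimes F)$ represented by a harmonic form $u$, I would show that $su$, regarded as an $(n,q)$-form with values in $K_{X}\otimes D\otimes F\otimes M$ and measured with the metric $h_{F}h_{M}$, is again harmonic. Closedness $\dbar(su)=s\,\dbar u = 0$ is immediate from the holomorphicity of $s$; for $\dbar^{*}_{h_{F}h_{M}}(su)=0$ I would use the commutation relations to express the extra contributions in terms of $\sqrt{-1}\Theta_{h_{M}}(M)\wedge \Lambda_{\omega} u$ and then dominate them by $t^{-1}\,\sqrt{-1}\Theta_{h_{F}}(F)\wedge \Lambda_{\omega} u$ via the curvature comparison, the latter vanishing by Enoki's identity. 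Consequently, if $[su]=0$ in $H^{q}(X, K_{X}\otimes D\otimes F\otimes M)$, then by uniqueness of the harmonic representative of the zero class we obtain $su=0$ as an $L^{2}$-form; since $s \not\equiv 0$, it is nonzero on a dense open set, forcing $u=0$ there and hence $[u]=0$.

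The main obstacle I expect is analytic rather than formal: establishing the $L^{2}$-Dolbeault isomorphism on the noncompact manifold $X\setminus D$ and controlling the behaviour of the harmonic forms near $D$. Concretely, one must verify that the $L^{2}$-harmonic space for the complete metric $\ome_{\e}$ and the singular weight along $D$ genuinely computes $H^{q}(X, K_{X}\otimes D\otimes F)$, and then pass to the limit $\e \to 0$ while keeping the harmonicity and Enoki's identity intact. The plt hypothesis is exactly what makes this local analysis feasible, and the regularization together with the uniform estimates needed to preserve harmonicity under multiplication by $s$ constitutes the technical heart of the proof.
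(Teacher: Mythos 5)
There is a genuine gap, and it sits at the very first step of your plan. You assert that, after choosing a Poincar\'e-type complete metric on $X\setminus D$ and a singular metric on $\OX_{X}(D)$ ``encoding the logarithmic poles along $D$'', the $L^{2}$-Dolbeault isomorphism identifies $H^{q}(X, K_{X}\otimes D\otimes F)$ with the space of $L^{2}$-harmonic $(n,q)$-forms. This is precisely what fails, and it is the central difficulty of the problem rather than a technical verification. If the metric on $\OX_{X}(D)$ has semi-positive curvature and is singular along $D$ (local weight $\sim 1/|t|^{2}$, as needed to run Enoki's curvature argument), then its multiplier ideal is $\OX_{X}(-D)$, and the De Rham--Weil isomorphism identifies the $L^{2}$-cohomology with $H^{q}(X, K_{X}\otimes D\otimes F\otimes \I{h_{D}})=H^{q}(X, K_{X}\otimes F)$ --- not with $H^{q}(X, K_{X}\otimes D\otimes F)$. (Even at the level of $(n,0)$-forms, a local section $f\,dz_{1}\wedge\cdots\wedge dz_{n}\otimes e_{D}$ is $L^{2}$ for such a weight only if $f$ vanishes along $D$; choosing instead a Poincar\'e metric on the base does not help, since for forms of type $(n,q)$ the weight along $D$ still forces the vanishing.) Conversely, a smooth metric on $\OX_{X}(D)$ does compute the right cohomology, but then its curvature has no sign and Enoki's identity is unavailable. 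This tension is exactly why the paper's Theorem \ref{key} establishes injectivity only on $\Image\,\Phi_{D}$, the image of $H^{q}(X,K_{X}\otimes F)\to H^{q}(X,K_{X}\otimes D\otimes F)$, and even there the proof is not the formal Enoki computation: the regularized metrics $g_{\e}$ have curvature with a negative part, and one must prove the quantitative estimates $\|v\|_{g_{\e}}=O(-\log\e)$ and $\|\dbar^{*}_{g_{\e}}(su)\|_{g_{\e}}=O(\e(-\log\e))$ (together with the refined hard Lefschetz theorem, Theorem \ref{Lef}, which supplies the smoothness of $*u$ needed for those estimates).

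A symptom of the same gap is that your argument never actually uses the two hypotheses that carry the content of the theorem: that $(X,D)$ is plt, and that $s^{-1}(0)$ contains no lc center. You invoke plt only to say the local picture near $D$ is simple, and you use only $s\not\equiv 0$ at the end. If your scheme worked as written, it would prove Conjecture \ref{F-conj} for arbitrary snc (lc) divisors $D$ and arbitrary nonzero $s$, which is well beyond what is known and is not expected without the lc-center condition. In the paper these hypotheses enter through a step your proposal is missing entirely: given $\alpha$ with $s\alpha=0$, one restricts to each component $D_{i}$, applies Enoki's injectivity theorem on $D_{i}$ to the map $f_{i}=\otimes\, s|_{D_{i}}$ --- this is where $s|_{D_{i}}\not\equiv 0$, i.e.\ the lc-center hypothesis, is indispensable --- to get $r_{i}(\alpha)=0$; the disjointness of the $D_{i}$ (plt) then gives $H^{q}(X,\OX_{D}(K_{X}\otimes D\otimes F))=\bigoplus_{i}H^{q}(D_{i},\OX_{D_{i}}(K_{D_{i}}\otimes F))$, hence $r_{D}(\alpha)=0$, and the long exact sequence places $\alpha$ in $\Image\,\Phi_{D}$, where Theorem \ref{key} finally applies. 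Without this reduction, the harmonic-theoretic machinery has nothing to act on, because the harmonic spaces available with good curvature simply do not see all of $H^{q}(X,K_{X}\otimes D\otimes F)$.
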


\begin{cor}\label{main-co}
Under the same situation as in Conjecture \ref{F-conj}, 
we assume that the pair $(X,D)$ is a plt pair. 
Then, the same conclusion as in Conjecture \ref{F-conj} holds,  
that is, the multiplication map induced by the tensor product with $s$ 
\begin{equation*}
H^{q}(X, K_{X} \otimes D \otimes F) 
\xrightarrow{\otimes s} 
H^{q}(X, K_{X} \otimes D \otimes F^{m+1} )
\end{equation*}
is injective for every $q$.

In particular, 
Conjecture \ref{F-conj} is affirmatively solved 
for a plt pair $(X,D)$. 
\end{cor}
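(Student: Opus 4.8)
The plan is to deduce Corollary \ref{main-co} directly from the Main Theorem (Theorem \ref{main}) by specializing the data, with no further analysis required. The key observation is that the target group in the conjecture can be rewritten as $H^{q}(X, K_{X} \otimes D \otimes F \otimes M)$ upon setting $M := F^{m}$, which is exactly the shape of the target appearing in Theorem \ref{main}. Accordingly, I would set $M = F^{m}$ and regard the given section $s$ of $F^{m}$ as a section of $M$; the assumption that $s^{-1}(0)$ contains no lc centers of $(X,D)$ is then precisely the hypothesis on $s$ demanded by Theorem \ref{main}. The plt assumption on $(X,D)$ is identical in both statements, so it transfers unchanged.

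It remains to equip $F$ and $M$ with smooth hermitian metrics satisfying the two curvature conditions of Theorem \ref{main}. Since $F$ is assumed semi-positive, I would fix a smooth metric $h_{F}$ on $F$ with $\sqrt{-1}\Theta_{h_{F}}(F) \geq 0$ and take the induced metric $h_{M} := h_{F}^{\otimes m}$ on $M = F^{m}$. This is again smooth, and by multiplicativity of the Chern curvature under tensor powers its curvature is
\begin{equation*}
\sqrt{-1}\Theta_{h_{M}}(M) = m\,\sqrt{-1}\Theta_{h_{F}}(F) \geq 0,
\end{equation*}
so the semi-positivity of $M$ holds. For the relative positivity the same computation gives
\begin{equation*}
\sqrt{-1}\bigl(\Theta_{h_{F}}(F) - t\,\Theta_{h_{M}}(M)\bigr) = (1 - tm)\,\sqrt{-1}\Theta_{h_{F}}(F),
\end{equation*}
which is semi-positive as soon as $0 < t \leq 1/m$. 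Choosing $t = 1/m > 0$ therefore verifies all the curvature hypotheses of Theorem \ref{main}.

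With these choices Theorem \ref{main} applies verbatim: noting that $F \otimes M = F^{m+1}$, it yields the injectivity of
\begin{equation*}
H^{q}(X, K_{X} \otimes D \otimes F) \xrightarrow{\otimes s} H^{q}(X, K_{X} \otimes D \otimes F^{m+1})
\end{equation*}
for every $q$, which is exactly the assertion of Corollary \ref{main-co}. I do not expect any genuine obstacle here, since the whole content of the corollary is already packaged in Theorem \ref{main}; the deduction amounts only to recognizing $F^{m+1} = F \otimes F^{m}$ and rescaling the curvature so that the relative positivity condition collapses to the plain semi-positivity of $F$. The sole point deserving a moment's care is that $h_{M} = h_{F}^{\otimes m}$ is a legitimate smooth metric with the stated curvature, but this is immediate.
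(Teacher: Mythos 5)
Your proposal is correct and matches the paper's own deduction exactly: the paper likewise sets $(M,h_{M}):=(F^{m},h_{F}^{m})$ and observes that the curvature hypotheses of Theorem \ref{main} hold with $t=1/m$, so that $F\otimes M = F^{m+1}$ gives the claimed injectivity. Your write-up merely spells out the curvature computation that the paper leaves as a one-line remark.
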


\begin{rem}\label{main-rem}
Let $D$ be a simple normal crossing divisor on a smooth variety $X$ and 
$D = \sum_{i\in I}D_{i}$ be the irreducible decomposition of $D$. 
Then we have\,: 
\begin{itemize}
\item[$\bullet$] $(X,D)$ is called a plt pair 
if $D_{i} \cap D_{j} = \emptyset$ for $i \not = j$. 
\item[$\bullet$] When $(X,D)$ is a plt pair, 
the zero locus $s^{-1}(0)$ contains no lc centers of $(X,D)$ if and only if 
$s^{-1}(0)$ does not contain $D_{i}$ for every $i \in I$. 
\end{itemize}
\end{rem}

In order to explain the difficulties of Conjecture \ref{F-conj} and 
the proof of Theorem \ref{main}, 
we recall klt singularities 
and multiplier ideal sheaves. 
The notion of multiplier ideal sheaves plays an important role 
in the recent developments in algebraic geometry. 
The multiplier ideal sheaf $\mathcal{J}(X,D)$, 
which is algebraically defined for a log pair $(X, D)$, 
can be  seen as  a \lq \lq non-klt" ideal (see subsection \ref{Sec2-1}). 
This is because the pair $(X, D)$ has klt singularities 
if and only if $\mathcal{J}(X,D)$ coincides with the structure sheaf $\mathcal{O}_{X}$. 
On the other hand, the multiplier ideal sheaf 
can be analytically defined for singular hermitian metrics 
in terms of the $L^2$-integrability of holomorphic functions (see Definition \ref{multi}). 
Thanks to the analytic expression of multiplier ideal sheaves, 
we can treat klt singularities by using the $L^2$-method. 
Indeed, we have already proved various injectivity theorems  
with multiplier ideal sheaves, 
which can be seen as an injectivity theorem for klt singularities  
(see \cite{Fuj15b} and \cite{Mat15} for the recent developments).

The notion of \lq \lq  non-lc" ideal sheaves has already introduced in \cite{FST11} and \cite{Fuj10}. 
However we have no analytic interpretations for non-lc ideal sheaves (see \cite[Question 2.4]{Fuj10}). 
Although we can apply the $L^2$-method for klt pairs as mentioned above,  
the usual $L^2$-method does not work for lc pairs 
since lc singularities are worse than klt singularities. 
This is one of the difficulties of Corollary \ref{F-conj}. 
To overcome this difficulty, in the proof of Theorem \ref{main}, 
we estimate the order of divergence of suitable $L^2$-norms  
(that is, how far from klt singularities). 
This argument may provide a new technique to treat lc singularities 
from the analytic viewpoint.

For the proof of Theorem \ref{main}, 
we reduce Theorem \ref{main} to the following theorem. 
In this reduction step, we use the assumption that $(X,D)$ is a plt pair. 
However, we emphasize that we do not need this assumption in Theorem \ref{key}.

\begin{theo}[Key Result]\label{key}
Let $D$ be a simple normal crossing divisor on a compact K\"ahler manifold $X$. 
Let $F$ $($resp. $M$$)$ be a $($holomorphic$)$ line bundle on $X$ 
with a smooth hermitian metric $h_{F}$  $($resp. $h_{M}$$)$ 
such that 
\begin{equation*}
\sqrt{-1}\Theta_{h_M}(M)\geq 0 \  \text{ and }\ 
\sqrt{-1}(\Theta_{h_F}(F)-t \Theta 
_{h_M}(M))\geq 0 
\ \text{ for some }t>0. 
\end{equation*}
We consider the map 
\begin{equation*}
\Phi_{D} : 
H^{q}(X, K_{X} \otimes F) 
\xrightarrow{\quad} 
H^{q}(X, K_{X} \otimes D \otimes F )
\end{equation*}
induced by the natural inclusion $\OX_{X} \hookrightarrow \OX_{X}(D)$. 
Then, the multiplication map on the image $\Image \Phi_{D}$ 
induced by the tensor product with $s$ 
\begin{equation*}
\Image \Phi_{D}
\xrightarrow{\quad \otimes s \quad } 
H^{q}(X, K_{X} \otimes D \otimes F\otimes M )
\end{equation*}
is injective for every $q$. 
\end{theo}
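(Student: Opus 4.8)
The plan is to run Enoki's method of harmonic integrals, adapted to the reduced (hence log canonical) boundary $D$ by a regularization that keeps track of how fast the relevant $L^{2}$-norms diverge as the regularization parameter degenerates to the log canonical threshold. First I would fix a K\"ahler form on $X$, a complete K\"ahler metric $\ome$ of Poincar\'e type on $X^{*}:=X\setminus \Supp D$ built from the canonical section $s_{D}$ of $\OX_X(D)$, and a family of metrics $h_{D,\e}$ on $\OX_X(D)$ regularizing the canonical singular metric $|s_{D}|^{-2}$, chosen with weight $(1-\e)\log|s_{D}|^{2}$ so that the associated multiplier ideal sheaf is trivial for every $\e>0$; in particular the $L^{2}$-Dolbeault complex for $(\ome, h_{D,\e}h_{F}h_{M})$ computes $H^{q}(X,K_X\otimes D\otimes F\otimes M)$ for each such $\e$. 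The curvature hypothesis $\sqrt{-1}\Theta_{h_F}(F)\ge t\sqrt{-1}\Theta_{h_M}(M)\ge 0$ will enter exactly as in Enoki's theorem, to force semi-positivity of $F$ and to absorb the curvature of $M$.

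Next I would take a class $\alpha\in\Image \Phi_D$ with $\alpha\otimes s=0$ and produce a convenient representative. Writing $\alpha=\Phi_D(\beta)$ with $\beta\in H^{q}(X,K_X\otimes F)$, represent $\beta$ by its $h_F$-harmonic $(n,q)$-form $u$ on the \emph{compact} manifold $X$; since $u$ has no pole along $D$, the very same form represents $\alpha$ under $\OX_X\hookrightarrow\OX_X(D)$, and $su$ represents $\alpha\otimes s$. The Bochner--Kodaira--Nakano identity together with $\sqrt{-1}\Theta_{h_F}(F)\ge 0$ forces $\nabla' u=0$ and $\sqrt{-1}\Theta_{h_F}(F)\wedge\Lambda u=0$; holomorphicity of $s$ then gives $\dbar(su)=0$, and the slack $t\sqrt{-1}\Theta_{h_M}(M)$ lets me conclude, by the standard Enoki-type computation, that $su$ is again harmonic for the smooth metric $h_Fh_M$.

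The heart of the argument is to reconcile two facts that live in different inner products. On one hand $su$ is harmonic for the \emph{smooth} metric; on the other hand the hypothesis $\alpha\otimes s=0$ says that $su$ is $\dbar$-exact in the $L^{2}$-complex attached to the \emph{pole-permitting} weight $h_{D,\e}$, and these two metrics differ precisely by the factor $|s_{D}|^{-2(1-\e)}$. I would therefore, for each $\e>0$, solve $\dbar v_{\e}=su$ on $X^{*}$ with the H\"ormander $L^{2}$-estimate for $(\ome,h_{D,\e})$, and then estimate the order of divergence, as $\e\to 0$, of $\|v_{\e}\|_{\e}^{2}$ and of the pairing $\lla su,su\rra_{\e}=\lla v_\e,\dbar^{*}_{\e}(su)\rra_{\e}$, the defect $\dbar^{*}_{\e}(su)$ arising because $su$ is co-closed only for the smooth metric. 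The condition that $s^{-1}(0)$ contains no lc center of $(X,D)$ is what guarantees that $su$ retains enough $L^{2}$-control transverse to every stratum of $D$ for these two divergences to be of the same order and to cancel in the limit, yielding $su\equiv 0$. Since $s\not\equiv 0$ and $u$ is smooth, $u$ then vanishes on the dense open set $X\setminus s^{-1}(0)$, hence $u\equiv 0$, so $\beta=0$ and $\alpha=\Phi_D(\beta)=0$.

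The main obstacle is precisely this divergence estimate. Because $D$ is reduced, the natural weight $|s_{D}|^{-2}$ sits exactly on the boundary of $L^{2}$-integrability --- the log canonical threshold equals one --- so the usual $L^{2}$-machinery degenerates and one cannot simply invoke \lq\lq harmonic plus exact equals zero''. Quantifying how the non-integrability produced by the pole along $D$ is compensated, order by order in $\e$, by the behaviour of $s$ along the lc centers is the delicate analytic point, and it is exactly the \lq\lq order of divergence of suitable $L^{2}$-norms'' highlighted in the introduction; the remaining steps are a fairly routine transcription of Enoki's harmonic argument.
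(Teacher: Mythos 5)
Your overall skeleton (regularize the metric of $\OX_{X}(D)$ along $D$, bound the growth of the norm of the $\dbar$-potential, bound the decay of the co-closedness defect of $su$, and conclude by Cauchy--Schwarz) is indeed the skeleton of the paper's proof, but the crucial step fails for the representative you chose, and the failure is visible already at the level of logic: your argument ends with ``$u\equiv 0$, so $\beta=0$'', i.e.\ it would prove injectivity on all of $H^{q}(X,K_{X}\otimes F)$ rather than on $\Image\Phi_{D}$. Taking $M=\OX_{X}$ with the flat metric and $s=1$ (allowed by the hypotheses), this would give $\Ker\Phi_{D}=0$, which is false in general: for $X$ a compact curve, $D$ a point, $F=\OX_{X}$, $q=1$, one has $H^{1}(X,K_{X})=\mathbb{C}$ while $H^{1}(X,K_{X}\otimes D)=H^{0}(X,\OX_{X}(-D))^{*}=0$, so $\Phi_{D}\equiv 0$. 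Analytically, the step that breaks is your claim that the defect $\dbar^{*}_{\e}(su)$ is small. If $u$ is the $h_{F}$-harmonic representative of $\beta$ on the compact manifold, then after the $D'^{*}$-term is killed, the Bochner--Kodaira identity expresses $\|\dbar^{*}_{\e}(su)\|^{2}_{\e}$ as a curvature pairing which contains, besides terms of order $O(\e(-\log\e))$, the contribution of the \emph{positive} part of the curvature of the regularized metric on $\OX_{X}(D)$, namely (in the paper's normalization $g_{\e}=g/(|t|_{g}^{2}+\e)$, $t$ the canonical section of $D$)
\begin{equation*}
\e\,\frac{\sqrt{-1}\,D'_{g}t\wedge\overline{D'_{g}t}}{(|t|_{g}^{2}+\e)^{2}},
\end{equation*}
whose total mass stays of order $1$ as $\e\to 0$ (it converges to the current of integration over $D$). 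Its pairing with $u$ does \emph{not} tend to zero for a generic harmonic $u$: in the curve example $u$ is a nonzero multiple of $\omega$, hence nowhere vanishing, and this term is bounded below by a positive constant. Enoki's pointwise identities $[\sqrt{-1}\Theta_{h_{F}}(F),\Lambda]u=0$ control only the $F$-part of the curvature, not this concentrated part; and the condition on $s^{-1}(0)$ cannot rescue the estimate, since Theorem \ref{key} imposes no such condition ($s$ is an arbitrary section of $M$ there; the lc-center hypothesis enters only in the reduction of Theorem \ref{main}, where Enoki's theorem is applied on each $D_{i}$). With your weight $|s_{D}|^{-2(1-\e)}$ the same obstruction appears with $\|v_{\e}\|^{2}=O(1/\e)$ against a defect at best $O(\e)$, so the product is $O(1)$, not $o(1)$; moreover H\"ormander's estimate is not even available, as the curvature hypotheses give only semi-positivity (this is secondary: the smooth potential $v$ furnished by $s\alpha=0$ suffices, as in the paper).

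The missing idea is the correction of the representative, which is the heart of the paper's argument. One must work in the $L^{2}$-space of the \emph{singular} metric $g_{0}h_{F}=h_{F}/|t|_{g}^{2}$ on $D\otimes F$ (equivalently, multiply the $h_{F}$-harmonic space by $t$), and then orthogonally project the harmonic representative onto $(\Ker\phi)^{\perp}$, where $\Ker\phi$ consists of the $g_{0}$-harmonic forms that become $\dbar$-exact in the $g_{\e}$-complex --- precisely the incarnation of $\Ker\Phi_{D}$. For this corrected $u$, Proposition \ref{f-1} shows that the dangerous pairing $\lla(\sqrt{-1}\Theta_{g_{\e}}(D)+\sqrt{-1}\Theta_{h_{F}}(F))\Lambda u,u\rra_{g_{0}}$ vanishes \emph{exactly}, by a purely functional-analytic orthogonality argument (decompose $w=\dbar\dbar^{*}_{g_{\e}}u$ in the $g_{0}$-complex and use $u\in(\Ker\phi)^{\perp}$), not by a pointwise curvature computation; what remains is only $\lla A_{\e}\Lambda u,u\rra_{g_{0}}=O(\e(-\log\e))$ (Proposition \ref{f-2}), which against $\|v\|^{2}_{g_{\e}}=O(-\log\e)$ (Proposition \ref{bdd}) yields $su=0$. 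The projection changes $\beta$ by an element of $\Ker\Phi_{D}$, which is exactly why the conclusion is $\alpha=0$ and nothing stronger. Finally, since the corrected $u$ is harmonic for a singular metric and $\omega$ is not complete on $X\setminus\Supp D$, its smoothness and the holomorphy of $*u$ (needed to kill the $D'^{*}$-term and to get $\sup_{X}|u|_{g_{0}}<\infty$) are not automatic: this is what Theorem \ref{Lef} supplies, and it cannot be bypassed the way your smooth-metric representative would allow.
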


In the proof of Theorem \ref{key}, 
the following theorem plays an important role. 
This theorem is a refinement of the hard Lefschetz theorem 
with multiplier ideal sheaves proved in \cite{DPS01}, 
which is independently of interest.

\begin{theo}[Hard Lefschetz Theorem]\label{Lef}
Let $\omega$ be a K\"ahler form on a compact K\"ahler manifold $X$ and 
$(G, h)$ be a singular hermitian line bundle with semi-positive curvature. 
Assume that the singular hermitian metric $h$ is 
smooth on a non-empty Zariski open set in $X$. 
Then, for a harmonic $G$-valued $(n,q)$-form $u \in \mathcal{H}^{n,q}_{h,\omega}(G)$ 
with respect to $h$ and $\omega$, 
we have 
$$
* u \in H^{0}(X, \Omega_{X}^{n-q}\otimes G \otimes \I{h}), 
$$
where $\Omega_{X}^{n-q}$ is the vector bundle of holomorphic $(n-q,0)$-forms,  
$*$ is the Hodge star operator with respect to $\omega$,  
and $\I{h}$ is the multiplier ideal sheaf of $h$. 
See Section \ref{Sec2} for the definition of 
the set of harmonic forms $\mathcal{H}^{n,q}_{h,\omega}(G)$. 
\end{theo}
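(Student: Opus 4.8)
The plan is to prove the pointwise and the integrability assertions together by combining the Bochner--Kodaira--Nakano identity with a regularization of the singular metric $h$, following the strategy of the hard Lefschetz theorem in \cite{DPS01}. Let $Z \subset X$ be the proper analytic set where $h$ fails to be smooth, so that $X \setminus Z$ is the non-empty Zariski open set on which $h$ is smooth with $\sqrt{-1}\Theta_{h}(G) \geq 0$. First I would reduce the statement to the smooth locus $X \setminus Z$: there the Chern connection $D = D' + \dbar$ of $(G,h)$ is defined, and the harmonic form $u \in \mathcal{H}^{n,q}_{h,\omega}(G)$ satisfies $\dbar u = 0$ and $\dbar^{*}_{h} u = 0$ in the usual smooth sense (see Section \ref{Sec2} for the definition of the harmonic space).

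The first key step is to show $D' u = 0$. Since $u$ has bidegree $(n,q)$ and $\sqrt{-1}\Theta_{h}(G) \geq 0$, the curvature term $\langle [\sqrt{-1}\Theta_{h}(G), \Lambda] u, u\rangle$ is non-negative. Applying the Bochner--Kodaira--Nakano identity $\Delta_{\dbar} = \Delta_{D'} + [\sqrt{-1}\Theta_{h}(G), \Lambda]$ to the harmonic form $u$ gives
\begin{equation*}
0 = \lla \Delta_{\dbar} u, u \rra = \|D' u\|^{2} + \|(D')^{*} u\|^{2} + \lla [\sqrt{-1}\Theta_{h}(G), \Lambda] u, u \rra,
\end{equation*}
and since every summand on the right is non-negative, each vanishes; in particular $D' u = 0$. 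The second key step is to translate this into holomorphicity of $*u$: because $u$ has bidegree $(n,q)$, its Hodge star $*u$ has bidegree $(n-q,0)$, and the commutation relations between $*$ and the components of the Chern connection on the K\"ahler manifold $(X \setminus Z, \omega)$ convert $D'u = 0$ into $\dbar(*u) = 0$. Hence $*u$ is a holomorphic section of $\Omega_{X}^{n-q}\otimes G$ over $X \setminus Z$.

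It then remains to extend $*u$ across $Z$ and to verify membership in $\I{h}$. Here I would use that $u$ is $L^{2}$ with respect to $h$ and $\omega$ (as an element of the Hilbert space of harmonic forms) and that the Hodge star is a pointwise isometry, so that $\int_{X} |*u|^{2}_{h,\omega}\, dV_{\omega} = \int_{X} |u|^{2}_{h,\omega}\, dV_{\omega} < \infty$. In local coordinates with a holomorphic frame $e$ of $G$ and weight $\varphi = -\log|e|^{2}_{h}$, writing $*u = \sum_{|I|=n-q} f_{I}\, dz^{I}\otimes e$, this bound reads $\int |f_{I}|^{2} e^{-\varphi}\, dV < \infty$, which is precisely the condition that the holomorphic form $*u$ lies in $\Omega_{X}^{n-q}\otimes G \otimes \I{h}$; the square-integrability against $e^{-\varphi}$ near $Z$ also forces $*u$ to extend holomorphically over $Z$ by the $L^{2}$ removable singularity theorem.

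The main obstacle, and the place where the hypotheses must be used with care, is making the Bochner--Kodaira--Nakano argument rigorous for the singular metric $h$, since the identity above is available only for a smooth metric. I would regularize $h$ by a decreasing sequence of smooth metrics $h_{\e} \searrow h$ with curvature bounded below, endow $X \setminus Z$ with complete K\"ahler metrics to control the boundary terms in the integration by parts, run the harmonic-theory argument for each $h_{\e}$, and pass to the limit. Securing uniform $L^{2}$ estimates and the convergence of the harmonic representatives---so that the limit still satisfies $\dbar(*u)=0$ and the $L^{2}$ bound survives to yield $\I{h}$-membership---is the delicate analytic heart of the proof, exactly as in the approximation scheme of \cite{DPS01}.
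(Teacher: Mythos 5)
Your outline points in the right general direction (Bochner--Kodaira--Nakano plus a limiting argument in the spirit of \cite{DPS01}), but it has a genuine gap at its first key step, and the sketch you offer to repair it would not work as stated. The integrated identity
\begin{equation*}
0 = \|D' u\|^{2} + \|(D')^{*} u\|^{2} + \lla [\sqrt{-1}\Theta_{h}(G), \Lambda] u, u \rra
\end{equation*}
cannot be applied to $u$ on $X \setminus Z$: the restriction of $\omega$ to $X \setminus Z$ is \emph{not complete}, so the integration by parts behind this identity may produce uncontrolled boundary terms along $Z$; Proposition \ref{BKN} of the paper requires a complete K\"ahler form precisely for this reason (cf.\ Remark \ref{Lef-rem}). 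Elliptic regularity does give that $u$ is smooth on $X\setminus Z$ and satisfies $\dbar u=0$ and $\dbar^{*}_{h,\omega}u=0$ pointwise there, but the global norm identity does not follow from the pointwise equations. There is also a smaller error: for an $(n,q)$-form $u$, the form $D'u$ has bidegree $(n+1,q)$ and vanishes for trivial reasons, so ``$D'u=0$'' carries no information; what yields holomorphicity is $(D')^{*}u=0$ combined with $(D')^{*}=-*\dbar\,*$ (an identity which is again only available in the complete setting), giving $\dbar(*u)=0$. Your displayed identity would also give $(D')^{*}u=0$, so this slip is repairable, but only after the completeness problem is solved.

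The ``delicate analytic heart'' you defer to the last paragraph is the actual content of the theorem, and the scheme you propose for it --- smooth metrics $h_{\e}\searrow h$ with curvature bounded below --- is not viable: a singular metric with semi-positive curvature on a compact K\"ahler manifold in general admits no smooth approximations whose negative curvature part tends to zero (the loss of positivity is governed by the Lelong numbers of $h$; this is why \cite{DPS01} must work with equisingular approximations that are still singular), and even granting such approximations you would face the unaddressed problem of relating the harmonic representatives for $h_{\e}$ back to the given form $u$. The paper's proof avoids regularizing $h$ altogether --- this is exactly where the hypothesis that $h$ is smooth on a Zariski open set $Y$ enters. It keeps $h$ fixed and perturbs only the K\"ahler form: choosing $\ome$ complete on $Y$ with bounded local potentials and setting $\omega_{\delta}:=\omega+\delta\ome$, it decomposes $u=w_{\delta}+u_{\delta}$ with $w_{\delta}\in\Image\dbar$ and $u_{\delta}$ harmonic for $(h,\omega_{\delta})$, applies the (now legitimate) identity of Proposition \ref{BKN} to $u_{\delta}$ to get $\dbar(*_{\delta}u_{\delta})=0$ together with the uniform bound $\|*_{\delta}u_{\delta}\|_{\omega}\leq\|u\|_{\omega}$, extends $*_{\delta}u_{\delta}$ across $X$ by the Riemann extension theorem, and then proves two convergence statements: $u_{\delta}$ converges weakly to $u$ itself (Propositions \ref{limit} and \ref{limit2}, which rest on the De Rham--Weil machinery of Propositions \ref{decomp}, \ref{comm}, and \ref{orth}), and $*_{\delta}u_{\delta}$ converges uniformly to a holomorphic $f_{0}\in H^{0}(X,\Omega_{X}^{n-q}\otimes G\otimes\I{h})$ by Montel's theorem, whence $*u=f_{0}$. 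Without an argument of this type, identifying the limit of the auxiliary harmonic forms with $u$, your outline does not close.
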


This paper is organized as follows\,$:$  
In Section \ref{Sec2}, 
we summarize the fundamental results, 
including singularities of pairs, multiplier ideal sheaves, the $L^2$-method, 
and the theory of harmonic integrals.  
We give a proof of Theorem \ref{Lef} (resp. Theorem \ref{key}, Theorem \ref{main}) 
in subsection \ref{Sec3-1} (resp. subsection \ref{Sec3-2}, subsection \ref{Sec3-3}). 
In subsection \ref{Sec3-4}, 
we discuss open problems related to the contents of this paper.

\subsection*{Acknowledgements}
The author wishes to express his gratitude to Professor Junyan Cao for stimulating discussions, 
and he wishes to thank Professor Osamu Fujino for giving useful comments. 
He is supported by the Grant-in-Aid for 
Young Scientists (B) $\sharp$25800051 from JSPS.

\section{Preliminaries}\label{Sec2}
In this section, 
we fix the notations and 
summarize the facts needed in this paper.

\subsection{Singularities of pairs and multiplier ideal sheaves}\label{Sec2-1}
We first recall the notion of singularities of pairs. 

\begin{defi}[Klt, plt, and lc singularities]\label{log-sing}
Let $(X, D)$ be a log pair (that is, 
a pair of a normal variety $X$ and an effective 
$\mathbb{Q}$-divisor $D$ on $X$ such that 
$K_X+D$ is $\mathbb{Q}$-Cartier, where $K_{X}$ is the canonical divisor of $X$). 
For a log resolution $\varphi:Y\rightarrow X$ of $(X,D)$, 
we define the $\mathbb{Q}$-divisor $D'$ by 
$$
K_Y+D'=\varphi^*(K_X+D),
$$ 
and we consider the irreducible decomposition of $D'=\sum b_{i}E_{i}$.  
Then  singularities of the pair $(X,D)$ is defined as follows\,: 
\begin{itemize}
\item[$\bullet$] $(X,D)$ is said to be  
{\it{kawamata log terminal}} $(${\it{klt}}, 
for short$)$ if $b_i < 1$ for every $i$. 
\item[$\bullet$] $(X,D)$ is said to be  
{\it{log canonical}} $(${\it{lc}}, for short$)$ if $b_i \leq 1$ for every $i$.
\end{itemize}
Note that the above definitions do not depend on the choice of log resolutions.

For an lc pair $(X, D)$, 
if there exists a log resolution $\varphi:Y\to X$ of $(X, D)$ such that 
the exceptional set ${\rm{Exc}} (\varphi)$ is a divisor, 
that $b_i<1$ for every $\varphi$-exceptional divisor $E_i$, and  
that $\lfloor D \rfloor$ is a sum of disjoint prime divisors, 
then the pair $(X, D)$ is said to be 
{\it{purely log terminal}} $(${\it{plt}}, for short$)$. 
Here $\lfloor D \rfloor$ denotes the divisor defined by the round-downs 
of the coefficients of $D$. 
\end{defi}

In this paper, we consider only log smooth pairs,  
and thus the following example is enough to follow the contents of this paper.

\begin{ex}\label{log-ex}
Let $(X,D)$ be a log smooth pair (that is, 
a pair of a smooth variety $X$ and an effective  
$\mathbb{Q}$-divisor $D$ on $X$ with simple normal crossing support). 
Let $D=\sum b_{i}D_{i}$ be the irreducible decomposition. 
Then, by the definition, we can easily check the following claims\,$:$   
\begin{enumerate}
\item[$\bullet$] The pair $(X,D)$ is klt if and only if $b_{i}<1$ for every $i$.  
\item[$\bullet$] The pair $(X,D)$ is plt if and only if 
$\lfloor D \rfloor$ is a sum of disjoint prime divisors. 
\item[$\bullet$] The pair $(X,D)$ is lc if and only if $b_{i}\leq 1$ for every $i$. 
\end{enumerate}
\end{ex}

In general, for a log pair $(X, D)$, 
the multiplier ideal sheaf $\mathcal{J}(X,D) \subset \mathcal{O}_{X}$ 
is defined by 
$$
\mathcal{J}(X,D):=\varphi_{*}\mathcal{O}_{Y}(-\lfloor  \varphi^{*}D -K_{Y/X} \rfloor), 
$$
where $K_{Y/X}$ is the relative canonical divisor and 
$\varphi:Y \to X$ is a log resolution of $(X,D)$. 
We remark that the multiplier ideal sheaf $\mathcal{J}(X,D)$ 
does not depend on the choice of log resolutions. 
Then the pair $(X,D)$ is klt if and only if 
$\mathcal{J}(X,D)$ coincides with $\mathcal{O}_{X}$. 
In this sense, multiplier ideal sheaves can be regarded as a non-klt ideal,  
and thus the injectivity theorem with multiplier ideal sheaves  
(proved in \cite{Mat16}, \cite{FM16}) can  be seen 
as an injectivity theorem for klt singularities. 
On the other hand, multiplier ideal sheaves can be defined for singular hermitian metrics 
(see \cite{Dem-L2} for singular hermitian metrics and curvatures).

\begin{defi}{(Multiplier ideal sheaves).}\label{multi}
Let $G$ be a $($holomorphic$)$ line bundle on a complex manifold $X$ 
and $h$ be a singular hermitian metric on $G$ such that 
$\sqrt{-1} \Theta_{h}(G) \geq \gamma$  
for some smooth $(1,1)$-form $\gamma$ on $X$. 
Then \textit{multiplier ideal sheaf} $\I{h}$ of $h$ is 
defined to be  
\begin{equation*}
\I{h}(B):=  \{f \in \mathcal{O}_{X}(B)\ \big|\ 
|f|e^{-\varphi} \in L^{2}_{\rm{loc}}(B) \}
\end{equation*}
for every open set $B \subset X$, 
where $\varphi$ is a local weight of $h$. 
\end{defi}

\begin{ex}\label{multi-ex}
For an effective divisor $D$ on a complex manifold $X$, 
let $g$ be a smooth hermitian metric on the line bundle $D$ and 
$t$ be the natural section of the effective divisor $D$.
Then the singular hermitian metric $h_{D}$ on the line bundle $D$ 
can be defined by 
$$
\varphi:=\frac{1}{2} \log (|t|^2_{g}) \quad \text{ and } \quad 
h_{D}:=ge^{-2\varphi}=\frac{1}{|t|^2}, 
$$
where $|t|_{g}$ is the point-wise norm of $t$ with respect to $g$ 
(see subsection \ref{Sec2-2}). 
Note that the singular hermitian metric 
$h_{D}$ does not depend on the choice of $g$. 
Then it is easy to see that 
the multiplier ideal $\I{h_{D}}$ of $h_{D}$ coincides with 
the multiplier ideal sheaf $\mathcal{J}(X,D)$ of the pair $(X,D)$. 
Moreover, 
when the support of $D$ is simple normal crossing, 
we can easily check $\I{h_{D}}= \mathcal{O}_{X}(-\lfloor D \rfloor)$. 
\end{ex}

\subsection{$L^2$-spaces and differential operators}\label{Sec2-2}

From now on, throughout Section \ref{Sec2},   
let $X$ be a (not necessarily compact) complex manifold of dimension $n$ and 
$G$ be a (holomorphic) line bundle on $X$. 
Further let $\omega$ be a positive $(1,1)$-form on $X$ 
(which is assumed to be a K\"ahler form in the main part of this paper) 
and $h$ be a singular hermitian metric on $G$. 
We always assume that the curvature $\sqrt{-1}\Theta_{h}(G)$ of $h$ 
satisfies $\sqrt{-1}\Theta_{h}(G) \geq \gamma$ 
for some smooth $(1,1)$-form $\gamma$. 

For $G$-valued $(p,q)$-forms $u$ and $v$, 
the notation $\langle u, v\rangle _{h, \omega}$ denotes the point-wise inner product 
with respect to $h$ and $\omega$, and 
$\lla u, v \rra  _{h, \omega}$ denotes 
the (global) inner product defined by
\begin{equation*}
\lla u, v   \rra  _{h, \omega}:=
\int_{X} 
\langle u, v  \rangle _{h, \omega}\, dV_{\omega},  
\end{equation*}
where $dV_{\omega}$ is the volume form defined 
by $dV_{\omega}:=\omega^{n}/n!$. 
(Recall that $n$ is the dimension of $X$.)  
The $L^{2}$-space of $G$-valued $(p, q)$-forms with respect to 
$h$ and $\omega$ is defined by
\begin{align*}
L_{(2)}^{p, q}(G)_{h, \omega}:=
L_{(2)}^{p, q}(X, G)_{h, \omega}:= 
\{u  \,|\,  u \text{ is a }G\text{-valued }(p, q)\text{-form with } 
\|u \|_{h, \omega}< \infty \}. 
\end{align*}
Then the maximal closed extension of the $\dbar$-operator 
determines a densely defined closed operator 
$\dbar: L_{(2)}^{p, q}(G)_{h, \omega} \to L_{(2)}^{p, q+1}(G)_{h, \omega}$ 
with the domain 
$$
{\rm{Dom}}\, \dbar:=\{u \in L_{(2)}^{p, q}(G)_{h, \omega} \, | \, 
\dbar u \in L_{(2)}^{p, q+1}(G)_{h, \omega} \}. 
$$
Strictly speaking, the closed operator $\dbar$ depends on $h$ and $\omega$
since the domain and the range depend on them, 
but we often omit the subscript  
(for example, we simply write $\dbar_{h,\omega}$ 
as $\dbar$). 
In general, we have the orthogonal decomposition 
$$
L_{(2)}^{n, q}(G)_{h, \omega}=
\overline{\Image \dbar} \oplus 
\mathcal{H}^{n,q}_{h,\omega}(G) \oplus  
\overline{\Image \dbar^{*}_{h, \omega}}, 
$$
where $\dbar^{*}_{h, \omega}$ is the Hilbert space adjoint of $\dbar$, 
the subspace ${\Image \dbar}$ (resp. $\Image \dbar^{*}_{h, \omega}$) 
is the range of $\dbar$ (resp. $\dbar^{*}_{h, \omega}$), 
and the subspace $\mathcal{H}^{n,q}_{h,\omega}(G)$ is the set of 
harmonic forms with respect to $h$ and $\omega$, 
that is, 
$$
\mathcal{H}^{n,q}_{h,\omega}(G):=
\{u \in L_{(2)}^{n, q}(G)_{h, \omega} \,|\, 
\dbar u = 0 \text{ and } \dbar^{*}_{h, \omega} u=0 
\}. 
$$
For example, see \cite[(1.2) Theorem]{Dem-L2} for the above orthogonal decomposition.

When $h$ is smooth on $X$, 
the Chern connection $D=D_{(G,h)}$ can be determined 
by the holomorphic structure of $G$ and the smooth hermitian metric $h$, 
which can be written as $D = D'_{h} + \dbar$ 
with the $(1,0)$-connection $D'_{h}$ and the $\dbar$-operator. 
The maximal closed extension of the $(1,0)$-connection $D'_{h}$ 
is also a densely defined closed operator 
$D'_{h}: L_{(2)}^{p, q}(G)_{h, \omega} \to L_{(2)}^{p+1, q}(G)_{h, \omega}$, 
whose domain is 
$$
{\rm{Dom}}\, D'_{h}:=\{u \in L_{(2)}^{p, q}(G)_{h, \omega} \, | \, 
D'_{h} u \in L_{(2)}^{p+1, q}(G)_{h, \omega} \}. 
$$

We consider the Hodge star operator $\ast$ with respect to $\omega$
$$
\ast=\ast_{\omega}\, \colon \, C_{\infty}^{p,q}(G) \to 
C_{\infty}^{n-q,n-p}(G),  
$$
where $C_{\infty}^{p,q}(G)$ is 
the set of smooth $G$-valued $(p,q)$-forms on $X$. 
By the definition, we have 
$\langle u, v\rangle _{h, \omega}\, dV_{\omega}=
u \wedge H \overline{\ast v}$ and $**u=(-1)^{\deg u} u$, 
where $H$ is a local function representing $h$. 
In this paper, 
the notations ${D_{h,\omega}'^{*}}$ and $\dbar^{*}_{h,\omega}$ denote  
the Hilbert space adjoint of $D_{h}'$ and $\dbar$. 
If $\omega$ is complete, 
the Hilbert space adjoint coincides with 
the maximal closed extension of the formal adjoint 
(for example, see \cite[(8.2) Lemma]{Dem-book}). 
In particular, when $\omega$ is complete, we have 
$$
{D_{h,\omega}'^{*}}=-\ast \dbar \ast \quad \text{and} \quad 
\dbar^{*}_{h,\omega}=-\ast {D'}_{h,\omega} \ast. 
$$
The following proposition is obtained from 
the Bochner-Kodaira-Nakano identity and the density lemma 
(see \cite{DPS01} and \cite[(1.2) Theorem]{Dem-book}).

\begin{prop}\label{BKN}
Under the same situation as the first of subsection \ref{Sec2-2}, 
we assume that $\omega$ is a complete K\"ahler form and 
$h$ is smooth on $X$. 
Then we have the following identity\,$:$
$$
[\dbar, \dbar^{*}_{h,\omega}]
=[{D_{h}'}, {D_{h,\omega}'^{*}}]
+ [\sqrt{-1}\Theta_{h}(G), \Lambda_{\omega}], 
$$
where $\Lambda_{\omega}$ is 
the adjoint operator of the wedge product $\omega \wedge \bullet$, 
and $[\bullet, \bullet]$ is the graded bracket 
defined by $[A,B]=A-(-1)^{\deg A \deg B}B$.

Moreover, for every $u \in {\rm{Dom}}\, \dbar \cap {\rm{Dom}}\,\dbar^{*}_{h,\omega} 
\subset L_{(2)}^{p, q}(G)_{h, \omega}$,  
we have  
\begin{equation*}
\|\dbar u \|_{h, \omega}^{2} + 
\| \dbar^{*}_{h,\omega}u \|_{h, \omega}^{2} 
 = 
\| {D_{h}'}u  \|_{h, \omega}^{2} + \| {D_{h,\omega}'^{*}}u  \|_{h, \omega}^{2} + 
\lla \sqrt{-1}\Theta_{h}(G)\Lambda_{\omega} u, u
\rra_{h, \omega}. 
\end{equation*} 
\end{prop}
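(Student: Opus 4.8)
The plan is to prove the pointwise operator identity first and then upgrade it to the stated $L^{2}$-identity by a density argument, with completeness of $\omega$ entering only in the second stage. \emph{The operator identity.} Since $h$ is smooth and $\omega$ is Kähler, the Chern connection splits as $D = D'_{h} + \dbar$ and the Kähler commutation relations hold: $[\Lambda_{\omega}, \dbar] = -\sqrt{-1}\,{D_{h,\omega}'^{*}}$ and $[\Lambda_{\omega}, D'_{h}] = \sqrt{-1}\,\dbar^{*}_{h,\omega}$ as formal adjoints. These are proved exactly as in the untwisted Kähler case from $d\omega = 0$, the twisting by $(G,h)$ entering only through the $(1,0)$-part $D'_{h}$. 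Granting them, I would write $\dbar^{*}_{h,\omega} = -\sqrt{-1}\,[\Lambda_{\omega}, D'_{h}]$ and expand $[\dbar, \dbar^{*}_{h,\omega}]$ via the graded Jacobi identity together with $[\dbar, \Lambda_{\omega}] = \sqrt{-1}\,{D_{h,\omega}'^{*}}$: the cross terms regroup into $[D'_{h}, {D_{h,\omega}'^{*}}]$, while the Chern curvature $\Theta_{h}(G) = [\dbar, D'_{h}]$ contributes $-\sqrt{-1}\,[\Lambda_{\omega}, \Theta_{h}(G)] = [\sqrt{-1}\Theta_{h}(G), \Lambda_{\omega}]$. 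This is the asserted identity, and the computation is purely formal on smooth forms, so I would quote it from the smooth setting in \cite{DPS01} and \cite{Dem-book}.

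\emph{Integrating against compactly supported forms.} For a smooth $G$-valued $(p,q)$-form $u$ with compact support, I pair the operator identity with $u$ in $\lla\,\cdot\,,\,\cdot\,\rra_{h,\omega}$. Since $\dbar$ and $\dbar^{*}_{h,\omega}$ are odd, $[\dbar, \dbar^{*}_{h,\omega}] = \dbar\dbar^{*}_{h,\omega} + \dbar^{*}_{h,\omega}\dbar$, and integration by parts — valid with no boundary term because $u$ is compactly supported — turns $\lla [\dbar, \dbar^{*}_{h,\omega}] u, u \rra_{h,\omega}$ into $\|\dbar u\|_{h,\omega}^{2} + \|\dbar^{*}_{h,\omega} u\|_{h,\omega}^{2}$, and likewise $\lla [D'_{h}, {D_{h,\omega}'^{*}}] u, u\rra_{h,\omega}$ into $\|D'_{h} u\|_{h,\omega}^{2} + \|{D_{h,\omega}'^{*}} u\|_{h,\omega}^{2}$. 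This establishes the norm identity on the dense subspace of compactly supported smooth forms.

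\emph{Density and passage to the limit.} It remains to replace compact support by the hypothesis $u \in {\rm{Dom}}\,\dbar \cap {\rm{Dom}}\,\dbar^{*}_{h,\omega}$. Here completeness of $\omega$ is essential: it yields the density lemma via Gaffney-type cutoffs $\chi_{\nu}$ with $\|\dbar\chi_{\nu}\|_{\infty}\to 0$, showing that compactly supported smooth forms are dense in ${\rm{Dom}}\,\dbar \cap {\rm{Dom}}\,\dbar^{*}_{h,\omega}$ for the graph norm $u \mapsto (\|u\|_{h,\omega}^{2} + \|\dbar u\|_{h,\omega}^{2} + \|\dbar^{*}_{h,\omega} u\|_{h,\omega}^{2})^{1/2}$; it is the same completeness that forces the formal and Hilbert-space adjoints to agree, as recorded before the statement via \cite[(8.2) Lemma]{Dem-book} (so that $\dbar^{*}_{h,\omega}=-\ast {D'}_{h,\omega}\ast$). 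Both sides of the norm identity are continuous in this graph norm, so approximating an arbitrary $u$ in the domain and passing to the limit gives the identity in general.

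\emph{Main obstacle.} The substantive point is this last step. The algebraic identity is routine once the Kähler relations are available, but the functional-analytic extension from compactly supported forms to the full $L^{2}$-domain is exactly where completeness cannot be dispensed with: without it, the Gaffney cutoffs need not exist, boundary terms may survive in the limit, and the Hilbert-space adjoint $\dbar^{*}_{h,\omega}$ need not coincide with $-\ast\,{D'}_{h,\omega}\ast$, so the whole identification breaks down.
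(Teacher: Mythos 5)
Your route --- the formal Bochner--Kodaira--Nakano identity via the K\"ahler commutation relations, integration by parts on compactly supported smooth forms, then extension by the density lemma afforded by completeness --- is exactly the argument the paper intends: the paper itself gives no details for this proposition and merely cites these two ingredients (\cite{DPS01} and \cite{Dem-book}). Your first two steps are correct and standard.

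The gap is your final claim that \emph{both} sides of the norm identity are continuous in the graph norm
$u \mapsto (\|u\|_{h,\omega}^{2}+\|\dbar u\|_{h,\omega}^{2}+\|\dbar^{*}_{h,\omega}u\|_{h,\omega}^{2})^{1/2}$.
The left-hand side is, trivially; the right-hand side is not, and asserting that it is begs the question, since its graph-norm continuity is a consequence of the very identity being proved. For $u$ merely in ${\rm{Dom}}\,\dbar\cap{\rm{Dom}}\,\dbar^{*}_{h,\omega}$ one does not know a priori that $D_{h}'u$ and $D_{h,\omega}'^{*}u$ lie in $L^{2}$, nor that $\langle\sqrt{-1}\Theta_{h}(G)\Lambda_{\omega}u,u\rangle_{h,\omega}$ is integrable; graph-norm convergence $v_{\nu}\to u$ of the cutoff approximations gives no control whatsoever on $D_{h}'v_{\nu}$, $D_{h,\omega}'^{*}v_{\nu}$, or the curvature pairing, so one cannot pass to the limit term by term. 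The standard repair is to apply the compactly supported identity to the differences $v_{\nu}-v_{\mu}$: whenever the curvature operator $[\sqrt{-1}\Theta_{h}(G),\Lambda_{\omega}]$ is bounded below by $-C\,{\rm{Id}}$ on the relevant bidegree (e.g.\ $X$ compact, or $\sqrt{-1}\Theta_{h}(G)\geq0$ acting on $(n,q)$-forms --- the only situations in which the paper invokes the proposition), all three right-hand terms for $v_{\nu}-v_{\mu}$ are nonnegative up to $C\|v_{\nu}-v_{\mu}\|^{2}_{h,\omega}$, hence $D_{h}'v_{\nu}$, $D_{h,\omega}'^{*}v_{\nu}$, and the square root of the shifted curvature term form Cauchy sequences in $L^{2}$; their limits are identified with $D_{h}'u$, $D_{h,\omega}'^{*}u$, etc.\ in the sense of distributions, and the identity then passes to the limit with all terms finite. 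Some such lower bound is genuinely needed: for a general smooth $h$ on a noncompact complete manifold the right-hand side can be of the form $\infty-\infty$ (already in bidegree $(0,0)$, where ${\rm{Dom}}\,\dbar^{*}_{h,\omega}$ is all of $L^{2}$ and the curvature may oscillate unboundedly), so the limit passage is precisely where the analytic content lies and cannot be dispatched by a continuity assertion.
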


For the proof of our results, 
it is important to use special characteristics of 
canonical bundles (differential $(n,q)$-forms). 
By the following lemma, 
we can compare the norms of $(n,q)$-forms and $(p,0)$-forms  
with respect to different positive $(1,1)$-forms. 
Lemma \ref{nq} is obtained from straightforward computations, 
and thus we omit the proof.

\begin{lemm}\label{nq}
Let $\omega$ and $\ome$  be positive $(1,1)$-forms such that $\omega \leq \ome$. 
Then we have the following\,$:$
\begin{itemize}
\item[$\bullet$] There exists $C>0$ such that 
$|a \wedge b|_{\omega}\leq C |a|_{\omega}|b|_{\omega}$ 
for differential forms $a$, $b$.
\item[$\bullet$] The inequality $|a|^2_{\ome} \leq |a|^2_{\omega}$ holds for a differential form $a$. 
\item[$\bullet$] The inequality $|a|^2_{\ome}\, dV_{\ome} 
\leq |a|^2_{\omega}\,  dV_{\omega}$ holds
for a $(n,q)$-form $a$. 
\item[$\bullet$] The inequality $|a|^2_{\ome}\, dV_{\ome} 
\geq |a|^2_{\omega}\,  dV_{\omega}$ holds
for a $(p,0)$-form $a$. 
\item[$\bullet$] The equality $|a|^2_{\ome}\, dV_{\ome} 
= |a|^2_{\omega}\,  dV_{\omega}$ holds
for a $(n,0)$-form $a$. 
\end{itemize}
\end{lemm}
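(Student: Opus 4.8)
The plan is to reduce all five assertions to a pointwise linear-algebra computation carried out in a coframe that simultaneously diagonalizes $\omega$ and $\ome$. Fix a point $x\in X$. Since $\omega$ is positive definite and $\ome$ is a positive $(1,1)$-form with $\omega\leq\ome$, I can choose a basis $dz_1,\dots,dz_n$ of the holomorphic cotangent space at $x$ with
$$
\omega=\sqrt{-1}\sum_{j=1}^{n}dz_j\wedge d\bar z_j
\qquad\text{and}\qquad
\ome=\sqrt{-1}\sum_{j=1}^{n}\lambda_j\,dz_j\wedge d\bar z_j ,
$$
and the hypothesis $\omega\leq\ome$ forces $\lambda_j\geq 1$ for every $j$. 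This is the only place the assumption $\omega\leq\ome$ enters.

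Next I would record the effect of this normalization on the relevant norms. The family $\{dz_I\wedge d\bar z_J\}_{I,J}$, indexed by increasing multi-indices $I,J$, is orthogonal for both $\langle\,\cdot\,,\cdot\,\rangle_\omega$ and $\langle\,\cdot\,,\cdot\,\rangle_{\ome}$, with
$$
|dz_I\wedge d\bar z_J|^2_\omega=1,
\qquad
|dz_I\wedge d\bar z_J|^2_{\ome}=\frac{1}{\lambda_I\lambda_J},
$$
where $\lambda_I:=\prod_{i\in I}\lambda_i$, and moreover $dV_{\ome}=\Lambda\,dV_\omega$ with $\Lambda:=\prod_{j=1}^{n}\lambda_j\geq 1$. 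Writing $a=\sum_{I,J}a_{IJ}\,dz_I\wedge d\bar z_J$, orthogonality turns each claim into a termwise comparison of nonnegative coefficients of $|a_{IJ}|^2$. The second bullet is immediate from $\lambda_I\lambda_J\geq 1$. For an $(n,q)$-form one has $I=\{1,\dots,n\}$, so $\lambda_I=\Lambda$ and the coefficient in $|a|^2_{\ome}\,dV_{\ome}$ is $\Lambda/(\Lambda\lambda_J)=1/\lambda_J\leq 1$, giving the third bullet. For a $(p,0)$-form one has $J=\emptyset$, so the coefficient is $\Lambda/\lambda_I=\prod_{j\notin I}\lambda_j\geq 1$, giving the fourth bullet. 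For an $(n,0)$-form both specializations occur and the coefficient is $\Lambda/\Lambda=1$, giving the equality in the fifth bullet.

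The first bullet I would argue separately, since it does not involve $\ome$. Passing to an $\omega$-orthonormal coframe at $x$, the structure constants of the exterior product relative to the induced orthonormal basis of the bigraded exterior algebra are universal—each basis wedge is either $0$ or $\pm$ a basis element—so the bilinear wedge map is bounded with operator norm depending only on $n=\dim X$. Hence $|a\wedge b|_\omega\leq C|a|_\omega|b|_\omega$ holds pointwise with a constant $C=C(n)$ uniform over $X$.

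Since every step uses only the pointwise data at a single $x$, all five statements hold at each $x\in X$, as claimed. I do not expect any analytic obstacle: the single place where care is required, and where a convention error could creep in, is the bookkeeping in the second paragraph—fixing one normalization so that the induced norms $|dz_I\wedge d\bar z_J|^2_\omega$, $|dz_I\wedge d\bar z_J|^2_{\ome}$ and the volume ratio $dV_{\ome}/dV_\omega=\Lambda$ are mutually consistent, and checking that the factors $\lambda_I$, $\lambda_J$, $\Lambda$ cancel exactly as above. This is consistent with the author's remark that the lemma follows from a straightforward computation.
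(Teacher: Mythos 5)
Your proof is correct: the simultaneous diagonalization of $\omega$ and $\ome$ at a point, the eigenvalue bookkeeping with $\lambda_j\geq 1$, and the universal bound for the wedge product in an orthonormal coframe are exactly the ``straightforward computations'' the paper alludes to when it omits the proof of Lemma \ref{nq}. Indeed, the paper itself uses this same local diagonalization device later (in the proof of Proposition \ref{agree}), so your argument is in the spirit of the original and fills the gap completely.
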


\subsection{De Rham-Weil isomorphisms} \label{Sec2-3}

In this subsection, 
we explain facts on the De Rham-Weil isomorphism 
from the $\dbar$-cohomology to the $\rm{\check{C}}$ech cohomology. 
The contents of this subsection are essentially contained in 
\cite{Fuj13a} and \cite{Mat16}, 
but we will summarize them for the reader's convenience.

Let $\omega$ be a K\"ahler form on a compact K\"ahler manifold $X$ 
and $h$ be a singular hermitian metric on a (holomorphic) line bundle $G$ 
such that $\sqrt{-1}\Theta_{h}(G) \geq - \omega$. 
Further let $Z$ be a proper subvariety on $X$ and 
let $\ome$ be a K\"ahler form on the Zariski open set $Y:=X \setminus Z$ 
with the following properties\,$:$
\begin{itemize}
\item[(B)] $\ome \geq \omega $ on $Y=X\setminus Z$. 
\item[(C)] For every point $p$ in $X$, 
there exists a \lq \lq bounded" function $\Phi$ 
on an open neighborhood of $p$ in $X$ 
such that $\ome =\deldel \Phi$. 
\end{itemize} 
The important point is that $\ome$ locally admits 
a \lq \lq bounded" potential function on a neighborhood of every point $p$ in $X$ (not $Y$), 
which enables us to construct the De Rham-Weil isomorphism.

As explained in subsection \ref{Sec2-2}, 
for the $L^2$-space of $G$-valued $(n,q)$-forms on $Y$ 
with respect to $h$ and $\ome$
\begin{align*}
L_{(2)}^{n, q}(G)_{h, \ome}:=
L_{(2)}^{n, q}(Y, G)_{h, \ome}:= 
\{u  \,|\,  u \text{ is a }G\text{-valued }(n, q)\text{-form with } 
\|u \|_{h, \ome}< \infty \}, 
\end{align*}
we have the orthogonal decomposition 
$$
L_{(2)}^{n, q}(G)_{h, \ome}=
\overline{\Image \dbar} \oplus 
\mathcal{H}^{n,q}_{h,\ome}(G) \oplus  
\overline{\Image \dbar^{*}_{h, \ome}}. 
$$
See subsection \ref{Sec2-2} for the set of harmonic forms 
$\mathcal{H}^{n,q}_{h,\ome}(G)$ with respect to $h$ and $\ome$. 
The following proposition is proved 
by the observation on the De Rham-Weil isomorphism 
(see \cite[Proposition 5.8]{Mat16} for the precise proof.)

\begin{prop}[{\cite[Proposition 5.8]{Mat16}}]\label{decomp}
Consider the same situation as above. 
That is, we consider a K\"ahler form $\omega$ on a compact K\"ahler manifold $X$, 
a singular hermitian metric $h$ on a $($holomorphic$)$ line bundle $G$ 
such that $\sqrt{-1}\Theta_{h}(G) \geq - \omega$, 
and a K\"ahler form $\ome$ on a Zariski open set $Y$ with properties $(B)$, $(C)$. 
Then the ranges ${\Image \dbar}$ and $\Image \dbar^{*}_{h, \ome}$ 
are closed subspaces in $L_{(2)}^{n, q}(G)_{h, \ome}$. 
In particular, we have the orthogonal decomposition 
$$
L_{(2)}^{n, q}(G)_{h, \ome}=
\Image \dbar \oplus 
\mathcal{H}^{n,q}_{h,\ome}(G) \oplus  
\Image \dbar^{*}_{h, \ome}. 
$$
\end{prop}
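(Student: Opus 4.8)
The plan is to remove the closures from the weak orthogonal decomposition by proving that the range $\Image \dbar$ is closed; closedness of $\Image \dbar^{*}_{h,\ome}$ then follows formally. The weak decomposition $L_{(2)}^{n,q}(G)_{h,\ome}=\overline{\Image \dbar}\oplus \mathcal{H}^{n,q}_{h,\ome}(G)\oplus\overline{\Image \dbar^{*}_{h,\ome}}$ is a purely Hilbert-space statement, valid for any closed densely defined operators with $\dbar^{2}=0$, so only the closedness of the ranges is at stake. Since $\overline{\Image \dbar}\subset \Ker \dbar$ and the reduced cohomology $\Ker\dbar/\overline{\Image\dbar}$ is isomorphic to $\mathcal{H}^{n,q}_{h,\ome}(G)$, it suffices to show that the non-reduced $L^2$-cohomology $\Ker \dbar/\Image \dbar$ is finite dimensional. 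Indeed, then $\overline{\Image \dbar}/\Image \dbar$ is finite dimensional; regarding $\dbar$ as a bounded operator from $(\Ker\dbar)^{\perp}\cap {\rm{Dom}}\,\dbar$ (with the graph norm) onto $\overline{\Image\dbar}$, its range $\Image\dbar$ has finite codimension in the target and is therefore closed, so $\Image \dbar=\overline{\Image \dbar}$.

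To compute this cohomology I would establish a De Rham–Weil isomorphism onto the cohomology on the \emph{compact} manifold $X$. For each open $U\subset X$, introduce the sheaf $\mathcal{L}^{n,q}$ of $G$-valued $(n,q)$-forms on $U\cap Y$ that are locally $L^2$ with respect to $h$ and $\ome$ together with their $\dbar$; these sheaves are fine (they are $C^{\infty}$-modules admitting partitions of unity), hence acyclic. Two local facts then identify $(\mathcal{L}^{n,\bullet},\dbar)$ with a fine resolution of $K_{X}\otimes G\otimes \I{h}$. First, in degree $0$, an $L^2$ holomorphic $(n,0)$-form is precisely a section of $K_{X}\otimes G\otimes \I{h}$: this uses the definition of $\I{h}$ together with the last bullet of Lemma \ref{nq}, by which the $\ome$-$L^2$ condition on an $(n,0)$-form coincides with the $\omega$-$L^2$ condition. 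Second, the complex is exact in positive degrees, i.e. a local $L^2$ Dolbeault lemma holds at every point $p\in X$, including $p\in Z$; this is where property $(C)$ enters, the bounded local potential $\ome=\deldel \Phi$ allowing one to solve $\dbar$ on a small ball with Hörmander estimates (the bounded weight changes the $L^2$ norms only by bounded factors, and replacing $h$ by $he^{-k\Phi}$ for large $k$ produces the strict positivity needed in the curvature term, using $\sqrt{-1}\Theta_{h}(G)\geq -\omega\geq -\ome$ from property $(B)$). Taking cohomology of this fine resolution yields $\Ker\dbar/\Image\dbar\cong H^{q}(X,K_{X}\otimes G\otimes \I{h})$.

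Because $X$ is compact and $K_{X}\otimes G\otimes \I{h}$ is coherent, the right-hand side is finite dimensional; hence the non-reduced $L^2$-cohomology is finite dimensional and, by the first paragraph, $\Image \dbar$ is closed. Running the same argument in bidegree $(n,q+1)$ shows that the range of $\dbar\colon L^{n,q}_{(2)}(G)_{h,\ome}\to L^{n,q+1}_{(2)}(G)_{h,\ome}$ is closed, which is equivalent to closedness of its adjoint $\Image \dbar^{*}_{h,\ome}\subset L^{n,q}_{(2)}(G)_{h,\ome}$. Substituting the two closed ranges into the weak decomposition gives the asserted orthogonal decomposition $L_{(2)}^{n,q}(G)_{h,\ome}=\Image \dbar\oplus \mathcal{H}^{n,q}_{h,\ome}(G)\oplus\Image \dbar^{*}_{h,\ome}$.

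The main obstacle is the De Rham–Weil isomorphism itself, and specifically the local $L^2$ solvability of $\dbar$ at points of the boundary $Z$, where $\ome$ degenerates relative to $X$ and the ordinary Dolbeault lemma is unavailable; it is exactly here that the geometric hypotheses $(B)$ and $(C)$ are indispensable, the bounded potential converting the problem into a genuine Hörmander $L^2$-estimate on relatively compact neighborhoods. Once this local statement is in place, the passage to the global finite-dimensional cohomology and the functional analysis are routine.
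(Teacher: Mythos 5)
Your proposal is correct and is essentially the same argument as the paper's source for this statement, \cite[Proposition 5.8]{Mat16}: one identifies the non-reduced $L^{2}$-cohomology of $L^{n,\bullet}_{(2)}(G)_{h,\ome}$ with $H^{q}(X, K_{X}\otimes G\otimes \I{h})$ via a De Rham--Weil isomorphism whose local $\dbar$-solvability near $Z$ rests precisely on properties (B) and (C), and then invokes the standard functional-analysis fact that finite-dimensional cohomology forces $\Image\dbar$ (and, by the closed range theorem, $\Image\dbar^{*}_{h,\ome}$) to be closed. The only cosmetic difference is that you build the De Rham--Weil isomorphism from a fine resolution by sheaves of locally $L^{2}$ forms, while the cited proof phrases it via \v{C}ech cohomology on a Stein cover with local $L^{2}$ solutions of the $\dbar$-equation.
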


We fix a finite open cover 
$\mathcal{U}:=\{B_{i}\}_{i \in I}$ of $X$ by 
sufficiently small Stein open sets $B_{i}$. 
We consider the set of $q$-cochains 
$C^{q}(\mathcal{U}, K_{X}\otimes G \otimes \I{h})$ 
with coefficients in  $K_{X}\otimes G \otimes \I{h}$ calculated by $\mathcal{U}$ 
and the coboundary operator 
$$
\delta : C^{q}(\mathcal{U}, K_{X}\otimes G \otimes \I{h}) \to 
C^{q+1}(\mathcal{U}, K_{X}\otimes G \otimes \I{h}). 
$$
Then we have the isomorphism 
$$
\dfrac{{\rm{Ker}}\, \delta}{{\rm{Im}}\, \delta} 
\text{ of } C^{q}(\mathcal{U}, K_{X}\otimes G \otimes \I{h})
\cong  \check{H}^{q}(X, K_{X}\otimes G \otimes \I{h}) 
$$
since the open cover $\mathcal{U}$ is a Stein cover. 
By using suitable local solutions of the $\dbar$-equation, 
we can construct the De Rham-Weil isomorphism  
\begin{align*}
\overline{f_{h, \ome}} \colon \dfrac{{\rm{Ker}}\, \dbar}{{\rm{Im}}\, \dbar} 
\text{ of } L^{n,q}_{(2)}(G)_{h, \ome}  
\xrightarrow{\quad \cong \quad }
& \dfrac{{\rm{Ker}}\, \delta}{{\rm{Im}}\, \delta} 
\text{ of } C^{q}(\mathcal{U}, K_{X}\otimes G \otimes \I{h}). 
\end{align*}
Then, by the construction of $\overline{f_{h, \ome}}$ (see \cite[Proposition 5.5]{Mat16}), 
we can easily check the following proposition

\begin{prop}\label{comm}
Consider the same situation as in Proposition \ref{decomp}. \\
$(1)$ Then the following diagram is commutative\,$:$ 
\begin{align*}
\xymatrix{
&\check{H}^{q}(X, K_{X}\otimes G \otimes \I{h})
\ar@{=}[r] & \check{H}^{q}(X, K_{X}\otimes G \otimes \I{h})\\
&\ar[u]^-{ \cong}_-{\overline{f_{h, \omega}}}
\dfrac{\Ker \dbar}{\Image \dbar} \text{ of } 
L^{n,q}_{(2)}(G)_{h, \omega} \ar[r]^{j_{1}}
& \ar[u]^-{ \cong}_-{\overline{f_{h, \ome}}}
\dfrac{\Ker \dbar}{\Image \dbar} \text{ of } 
L^{n,q}_{(2)}(G)_{h, \ome}, \ar[u]}
\end{align*}
where $j_{1}$ is the map induced by the natural map 
$L^{n,q}_{(2)}(G)_{h, \omega} \to L^{n,q}_{(2)}(G)_{h, \ome}$. 
\\
$(2)$ Let $h'$ be a singular hermitian metric on $G$ such that 
$\sqrt{-1}\Theta_{h'}(G) \geq -\omega$ and $h' \geq h$. 
Then the following diagram is commutative\,$:$ 
\begin{align*}
\xymatrix{
&\check{H}^{q}(X, K_{X}\otimes G \otimes \I{h'})
\ar[r]^-{ j} & \check{H}^{q}(X, K_{X}\otimes G \otimes \I{h})\\
&\ar[u]^-{ \cong}_-{\overline{f_{h', \omega}}}
\dfrac{\Ker \dbar}{\Image \dbar} \text{ of } 
L^{n,q}_{(2)}(G)_{h', \ome} \ar[r]^{j_{2}}
& \ar[u]^-{ \cong}_-{\overline{f_{h, \ome}}}
\dfrac{\Ker \dbar}{\Image \dbar} \text{ of } 
L^{n,q}_{(2)}(G)_{h, \ome}, \ar[u]}\end{align*}
where $j_{2}$ is the map induced by the natural map 
$L^{n,q}_{(2)}(G)_{h', \ome} \to L^{n,q}_{(2)}(G)_{h, \ome}$ and 
$j$ is the map induced by $\I{h'} \hookrightarrow \I{h}$. 
\end{prop}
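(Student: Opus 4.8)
The plan is to read off both commutativities directly from the explicit construction of the De Rham--Weil isomorphism $\overline{f_{h, \ome}}$ recalled in \cite[Proposition 5.5]{Mat16}. The guiding principle is that the \v{C}ech cocycle representing $\overline{f_{h, \ome}}([u])$ is produced by a purely local procedure that depends only on the $\dbar$-operator and on the chosen representative $u$; the auxiliary data $(h, \ome)$ intervene only through two requirements, namely that the local $\dbar$-equations used in the construction admit $L^2$ solutions and that the holomorphic sections obtained at the end lie in the multiplier ideal sheaf $\I{h}$. Once this is made explicit, the naturality of the construction with respect to the maps $j_1$ and $j_2$ will be immediate.

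First I would recall the construction. Given a $\dbar$-closed form $u \in L^{n,q}_{(2)}(G)_{h, \ome}$, one fixes the Stein cover $\mathcal{U} = \{B_i\}_{i \in I}$ and solves on each $B_i$ the equation $\dbar u_i = u$; on the nonempty intersections one then solves successive $\dbar$-equations, descending through the bidegrees $(n, q-1), (n, q-2), \dots, (n,0)$. At the bottom one obtains, on the $(q+1)$-fold intersections, holomorphic sections of $K_X \otimes G$ that assemble into a \v{C}ech $q$-cocycle, and the ambient $L^2$-control forces these sections to lie in $K_X \otimes G \otimes \I{h}$; their class is $\overline{f_{h, \ome}}([u])$.

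For part $(1)$, the map $j_1$ sends the class of $u$ to the class of the \emph{same} form $u$ regarded in $L^{n,q}_{(2)}(G)_{h, \ome}$, which is well defined because $\ome \ge \omega$ gives $\|u\|_{h, \ome} \le \|u\|_{h, \omega}$ by Lemma \ref{nq}. The two isomorphisms $\overline{f_{h, \omega}}$ and $\overline{f_{h, \ome}}$ run the identical descent on this identical representative. Here Lemma \ref{nq} is used twice: its statement for $(n,q)$-forms shows that the local $L^2$-conditions only weaken when one passes from $\omega$ to $\ome$, so the very same local solutions may be reused; and its equality for $(n,0)$-forms shows that the integrability condition defining $\I{h}$ at the bottom of the descent is literally unchanged. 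Hence both descents yield the identical cocycle, which is precisely the assertion that the top horizontal arrow is the identity.

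For part $(2)$, the hypothesis $h' \ge h$ gives $\|u\|_{h, \ome} \le \|u\|_{h', \ome}$, so $j_2$ is well defined and again sends $[u]$ to the class of the same form $u$. Running the descent for $\overline{f_{h', \ome}}$ produces holomorphic sections that are locally $L^2$ with respect to $h'$, hence lie in $\I{h'}$; regarding these very sections as sections of $\I{h}$ through the inclusion $\I{h'} \hookrightarrow \I{h}$ is exactly the map $j$, and recovers the cocycle produced by the descent for $\overline{f_{h, \ome}}$. This gives the second commutativity. The one substantive point, which I would invoke from \cite[Proposition 5.5]{Mat16} rather than reprove, is the $L^2$-control of the local solutions across the singular set $Z$ where $\ome$ may degenerate; this is exactly what property $(C)$ (a bounded local potential for $\ome$) secures. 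Granting it, the argument is the formal observation that identical local data produce identical cocycles, which is why both diagrams commute.
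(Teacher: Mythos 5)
Your proposal is correct and takes essentially the same route as the paper: the paper also deduces both commutativities directly from the explicit construction of the De Rham--Weil isomorphism in \cite[Proposition 5.5]{Mat16}, using the norm comparisons of Lemma \ref{nq} together with property (B) and the hypothesis $h' \geq h$ to see that $j_{1}$ and $j_{2}$ are well defined (cf. Remark \ref{rem-comm}). Your write-up merely makes explicit the same observation the paper leaves to the reader, namely that the local solutions used in the descent for the stronger norm remain admissible for the weaker one, so identical cocycles result.
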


\begin{rem}\label{rem-comm}
By property (B) and the third claim of Lemma \ref{nq}, 
we have $\|u\|_{h,\ome} \leq \|u\|_{h,\omega}$ 
for an arbitrary $G$-valued $(n,q)$-form $u$. 
Therefore the natural map $j_{1}$ is well-defined. 
By the same way, 
we can easily check that $j_{2}$ is well-defined 
from $\|u\|_{h,\ome} \leq \|u\|_{h',\ome}$. 
\end{rem}

\subsection{Weak convergence in Hilbert spaces}\label{Sec2-4}
In this subsection, 
we summarize Lemma \ref{w-clo} and Lemma \ref{w-con}. 
See \cite[Section 2]{FM16} for the proof.

\begin{lemm}\label{w-clo}
Let $L$ be a closed subspace in a Hilbert space $\mathcal{H}$. 
Then $L$ is closed with respect to the weak topology of $\mathcal{H}$, 
that is, if a sequence $\{w_{k}\}_{k=1}^{\infty}$ in $L$ 
weakly converges to $w$, then the weak limit $w$ belongs to $L$. 
\end{lemm}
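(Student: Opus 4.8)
The plan is to exploit the Hilbert space structure directly through the orthogonal decomposition, rather than invoking the more general Mazur theorem that closed convex sets are weakly closed. Since $L$ is a norm-closed subspace, we have the orthogonal decomposition $\mathcal{H} = L \oplus L^{\perp}$, and consequently $(L^{\perp})^{\perp} = L$. The key reformulation is that membership $w \in L$ is equivalent to the condition $\langle w, v \rangle = 0$ for every $v \in L^{\perp}$, so it suffices to test the weak limit against the orthogonal complement.

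First I would fix an arbitrary element $v \in L^{\perp}$. Since each $w_{k}$ lies in $L$ and $v$ is orthogonal to $L$, we have $\langle w_{k}, v \rangle = 0$ for all $k$. By the very definition of weak convergence, $\langle w_{k}, v \rangle \to \langle w, v \rangle$ as $k \to \infty$, and passing to the limit in the constant sequence of zeros yields $\langle w, v \rangle = 0$.

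Because $v \in L^{\perp}$ was arbitrary, this shows $w \in (L^{\perp})^{\perp}$, which equals $L$ by the closedness assumption. Hence $w \in L$, as claimed.

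There is essentially no serious obstacle in this argument; the only point that genuinely uses the hypothesis is the identity $(L^{\perp})^{\perp} = L$, which holds \emph{precisely} because $L$ is assumed to be closed (for a non-closed subspace one would only recover the closure $\overline{L}$, and the conclusion would fail). Everything else is an immediate consequence of the definition of weak convergence, tested against the separating family provided by $L^{\perp}$.
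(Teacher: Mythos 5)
Your proof is correct and is essentially the argument the paper relies on: the paper itself does not reproduce a proof but defers to \cite[Section 2]{FM16}, where the lemma is proved by the same orthogonal-decomposition device (write $w = w_1 + w_2$ with $w_1 \in L$, $w_2 \in L^{\perp}$, and test weak convergence against $w_2$ to get $\|w_2\|^2 = \lim_{k\to\infty}\langle w_k, w_2\rangle = 0$). Your formulation via $w \in (L^{\perp})^{\perp} = L$ is just a repackaging of that computation, so nothing further is needed.
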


\begin{lemm}\label{w-con}
Let $\varphi: \mathcal{H}_{1} \to \mathcal{H}_{2}$ be 
a bounded operator $($continuous linear map$)$ between Hilbert spaces
$\mathcal{H}_{1}$ and $\mathcal{H}_{2}$. 
If $\{w_{k}\}_{k=1}^{\infty}$ weakly converges to $w$ in $\mathcal{H}_{1}$, 
then $\{\varphi(w_{k})\}_{k=1}^{\infty}$ weakly converges to 
$\varphi(w)$ in $\mathcal{H}_{2}$. 
\end{lemm}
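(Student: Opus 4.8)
The plan is to reduce weak convergence of the images to weak convergence of the original sequence tested against a single fixed vector, by passing through the Hilbert space adjoint. Recall that $w_{k} \rightharpoonup w$ in $\mathcal{H}_{1}$ means precisely that $\langle w_{k}, u \rangle \to \langle w, u \rangle$ for every $u \in \mathcal{H}_{1}$, and likewise weak convergence in $\mathcal{H}_{2}$ is tested against arbitrary vectors of $\mathcal{H}_{2}$. Thus the goal is to verify that $\langle \varphi(w_{k}), v \rangle \to \langle \varphi(w), v \rangle$ for each fixed $v \in \mathcal{H}_{2}$.

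First I would invoke the existence of the adjoint $\varphi^{*} \colon \mathcal{H}_{2} \to \mathcal{H}_{1}$, which is a bounded operator since $\varphi$ is bounded (its existence and boundedness follow from the Riesz representation theorem), and which satisfies $\langle \varphi(x), y \rangle = \langle x, \varphi^{*}(y) \rangle$ for all $x \in \mathcal{H}_{1}$ and $y \in \mathcal{H}_{2}$. Applying this identity with $x = w_{k}$ and $y = v$ gives $\langle \varphi(w_{k}), v \rangle = \langle w_{k}, \varphi^{*}(v) \rangle$, which moves the operator off the varying sequence and onto the fixed test vector.

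Next, observe that $\varphi^{*}(v)$ is one fixed element of $\mathcal{H}_{1}$. Hence the hypothesis $w_{k} \rightharpoonup w$, applied to the test vector $u := \varphi^{*}(v)$, yields $\langle w_{k}, \varphi^{*}(v) \rangle \to \langle w, \varphi^{*}(v) \rangle$. Using the adjoint relation once more, now with $x = w$, I rewrite the limit as $\langle w, \varphi^{*}(v) \rangle = \langle \varphi(w), v \rangle$. Combining these gives $\langle \varphi(w_{k}), v \rangle \to \langle \varphi(w), v \rangle$, and since $v \in \mathcal{H}_{2}$ was arbitrary, this is exactly the assertion $\varphi(w_{k}) \rightharpoonup \varphi(w)$.

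There is essentially no obstacle here: the statement is the standard fact that bounded operators are weak-to-weak continuous, and the only ingredient beyond the definition of weak convergence is the adjoint. The one point worth stating cleanly is that the boundedness of $\varphi$ is precisely what makes $\varphi^{*}$ well-defined and bounded; without it the key identity $\langle \varphi(w_{k}), v \rangle = \langle w_{k}, \varphi^{*}(v) \rangle$ would be unavailable, so the hypothesis of continuity is used exactly once and is indispensable.
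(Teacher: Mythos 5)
Your proof is correct: the adjoint argument $\langle \varphi(w_{k}), v \rangle = \langle w_{k}, \varphi^{*}(v) \rangle \to \langle w, \varphi^{*}(v) \rangle = \langle \varphi(w), v \rangle$ is exactly the standard proof of weak-to-weak continuity of bounded operators. The paper itself does not reproduce a proof but defers to \cite[Section 2]{FM16}, where the argument is the same one you give, so there is nothing to add.
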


\section{Proof of the main results}\label{Sec3}
\subsection{Proof of Theorem \ref{Lef}}\label{Sec3-1}
Theorem \ref{Lef} is a refinement of the hard Lefschetz theorem 
with multiplier ideal sheaves 
and plays a crucial role in the proof of Theorem \ref{main}. 
In this subsection, we give a proof of Theorem \ref{Lef}. 
We first show the following proposition, 
which is needed for the proof of Theorem \ref{Lef}.

\begin{prop}\label{orth}
Let $\omega$ be a K\"ahler form on a compact K\"ahler manifold $X$ and 
$(G, h)$ be a singular hermitian line bundle with semi-positive curvature. 
Let $\ome$ be a K\"ahler form on a non-empty Zariski open set $Y$ 
with the following properties\,$:$ 
\begin{itemize}
\item[(B)] $\ome \geq \omega$ on $Y$. 
\item[(C)] For every point $p \in X$, 
there exists a bounded function $\Phi$ on an open neighborhood of $p$ in $X$ 
such that $\ome = \deldel \Phi$. 
\end{itemize}
Then, we have $\lla u, w \rra_{h,\omega}=0 $
for any $u \in \mathcal{H}^{n,q}_{h,\omega}(G)$ 
and $w \in L^{n,q}_{(2)}(G)_{h,\omega}$ such that 
$w \in \Image \dbar \subset L^{n,q}_{(2)}(G)_{h,\ome}$. 
\end{prop}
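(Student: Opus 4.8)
The plan is to reduce the desired orthogonality to the injectivity of the comparison map $j_{1}$ between the two $\dbar$-cohomologies supplied by Proposition \ref{comm}. The guiding observation is that $u \in \mathcal{H}^{n,q}_{h,\omega}(G)$ is, by the orthogonal decomposition of $L^{n,q}_{(2)}(G)_{h, \omega}$ recorded in subsection \ref{Sec2-2}, orthogonal in the $\lla \cdot, \cdot \rra_{h,\omega}$ product to $\overline{\Image \dbar}$. Hence it suffices to prove that $w$ represents the zero class in the quotient $\Ker \dbar / \Image \dbar$ of $L^{n,q}_{(2)}(G)_{h, \omega}$, since then $w \in \Image \dbar \subset \overline{\Image \dbar}$ and the inner product vanishes automatically.

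First I would verify that $w$ genuinely defines a class in the $\omega$-cohomology. By assumption $w \in L^{n,q}_{(2)}(G)_{h, \omega}$, and since $w$ lies in $\Image \dbar \subset L^{n,q}_{(2)}(G)_{h, \ome}$ we have $\dbar w = 0$ as a form; because $0 \in L^{n,q+1}_{(2)}(G)_{h, \omega}$ this places $w$ in ${\rm Dom}\, \dbar$ with $\dbar w = 0$ in the $\omega$-space, so $[w]_{\omega}$ is well-defined. Property (B) and the third inequality of Lemma \ref{nq} (the norm comparison for $(n,q)$-forms noted in Remark \ref{rem-comm}) make the inclusion $L^{n,q}_{(2)}(G)_{h, \omega} \hookrightarrow L^{n,q}_{(2)}(G)_{h, \ome}$ well-defined, so its induced map $j_{1}$ sends $[w]_{\omega}$ to $[w]_{\ome}$. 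Now I would invoke Proposition \ref{comm}$(1)$: the commuting square with the De Rham-Weil isomorphisms $\overline{f_{h,\omega}}$ and $\overline{f_{h,\ome}}$ exhibits $j_{1}$ as the composite $\overline{f_{h,\ome}}^{-1} \circ \overline{f_{h,\omega}}$ of two isomorphisms onto $\check{H}^{q}(X, K_{X}\otimes G \otimes \I{h})$, hence $j_{1}$ is injective. Since $w \in \Image \dbar$ in $L^{n,q}_{(2)}(G)_{h, \ome}$, we get $j_{1}([w]_{\omega}) = [w]_{\ome} = 0$, so injectivity forces $[w]_{\omega} = 0$, and the reduction above finishes the proof.

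The substantive input is thus entirely carried by the De Rham-Weil machinery of Proposition \ref{comm} together with the closed-range statement of Proposition \ref{decomp}; the only new bookkeeping is to observe that $w$, a priori only $\dbar$-exact for the complete metric $\ome$, still determines a legitimate class for $\omega$ and that the two classes are matched by $j_{1}$. I expect the point demanding the most care to be precisely this compatibility: one must know that passing from $\ome$ to $\omega$ does not lose the form $w$ from the relevant $L^2$-space, which is guaranteed here only because the norm inequality of Lemma \ref{nq} runs in the favourable direction for $(n,q)$-forms (and not for arbitrary forms). This is the place where the special role of differential $(n,q)$-forms, emphasized before Lemma \ref{nq}, is indispensable.
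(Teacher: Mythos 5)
Your proof is correct and takes essentially the same route as the paper: the paper decomposes $w = w_{1} + w_{2}$ with $w_{2} \in \mathcal{H}^{n,q}_{h,\omega}(G)$ and kills $w_{2}$ using the isomorphism $\phi$ (the identification $\mathcal{H}^{n,q}_{h,\omega}(G) \cong \Ker \dbar / \Image \dbar$ composed with $j_{1}$), which is exactly your argument that $j_{1}$ is injective applied to the class $[w]_{\omega}$. Both versions rest on the same inputs, namely Proposition \ref{decomp} (closed range) and Proposition \ref{comm} (the De Rham--Weil comparison), together with the $(n,q)$-form norm inequality of Lemma \ref{nq} that makes $j_{1}$ well-defined.
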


\begin{rem}\label{rem-orth}
By the assumption $w \in \Image \dbar \subset L^{n,q}_{(2)}(G)_{h,\ome}$, 
there exists $v \in L^{n,q-1}_{(2)}(G)_{h,\ome}$ such that $w=\dbar v$. 
However, since the solution $v$ may not belong to $L^{n,q-1}_{(2)}(G)_{h,\omega}$, 
we can not immediately conclude that 
$w \in \Image \dbar \subset L^{n,q}_{(2)}(G)_{h,\omega}$. 
\end{rem}

\begin{proof}
Note that we have $w \in \Ker \dbar \subset L^{n,q}_{(2)}(G)_{h,\omega}$ 
by the assumption $w \in \Image \dbar \subset L^{n,q}_{(2)}(G)_{h,\ome}$. 
By applying Proposition \ref{decomp} for $\omega$, 
we obtain the orthogonal decomposition
$$
L^{n,q}_{(2)}(G)_{h,\omega} \supset \Ker \dbar =
\Image \dbar \oplus 
\mathcal{H}^{n,q}_{h,\omega}(G). 
$$
By this orthogonal decomposition, $w$ can be decomposed as follows\,$:$
$$
w=w_{1}+w_{2} \ \text{ for some } 
w_{1} \in \Image \dbar \ \text{ and }\  
w_{2} \in \mathcal{H}^{n,q}_{h,\omega}(G) \ \text{ in } L^{n,q}_{(2)}(G)_{h,\omega}.
$$
We will show that $w_{2}$ is actually zero 
by the assumption $w \in \Image \dbar \subset L^{n,q}_{(2)}(G)_{h,\ome}$. 
Then we obtain the conclusion $\lla u, w \rra_{h,\omega}=0 $ 
since we have 
$$
\lla u, w \rra_{h,\omega} = \lla u, w_{2} \rra_{h,\omega}= 0 
\text{ by $u \in \mathcal{H}^{n,q}_{h,\omega}(G)$ and 
$w_{1} \in \Image \dbar \subset L^{n,q}_{(2)}(G)_{h,\omega}$.}
$$ 
To prove that $w_{2}=0$, 
we consider the following composite map\,:
$$
\phi : \mathcal{H}^{n,q}_{h,\omega}(G) \xrightarrow{} 
\frac{\Ker \dbar}{\Image \dbar} \text{ of }  L^{n,q}_{(2)}(G)_{h,\omega} \xrightarrow{j_{1}} 
\frac{\Ker \dbar}{\Image \dbar} \text{ of }  L^{n,q}_{(2)}(G)_{h,\ome},  
$$
where $j_{1}$ is the map induced by the natural map 
$L^{n,q}_{(2)}(G)_{h, \omega} \to L^{n,q}_{(2)}(G)_{h, \ome}$. 
The map $\phi $ is a (well-defined) isomorphism 
by Proposition \ref{decomp} and Proposition \ref{comm}. 
It follows that 
$$
w_{1} \in  \Image \dbar \subset L^{n,q}_{(2)}(G)_{h,\omega} \subset 
\Image \dbar \subset L^{n,q}_{(2)}(G)_{h,\ome}
$$ 
from the third claim of Lemma \ref{nq} and property (B) of $\ome$. 
Hence  
$w_{2}=w-w_{1}$ also belongs to 
$\Image \dbar \subset L^{n,q}_{(2)}(G)_{h,\ome}$ 
by the assumption $w \in \Image \dbar \subset L^{n,q}_{(2)}(G)_{h,\ome}$. 
In particular, this implies that  $\phi (w_{2})=0$.  
We obtain $w_{2}=0$ since the map $\phi $ is an isomorphism. 
\end{proof}

In the rest of this subsection, 
we prove Theorem \ref{Lef}.  

\begin{theo}[=Theorem \ref{Lef}]\label{Leff}
Let $\omega$ be a K\"ahler form on a compact K\"ahler manifold $X$ and 
$(G, h)$ be a singular hermitian line bundle with semi-positive curvature. 
Assume that the singular hermitian metric $h$ is 
smooth on a non-empty Zariski open set in $X$. 
Then, for a harmonic $G$-valued $(n,q)$-form $u \in \mathcal{H}^{n,q}_{h,\omega}(G)$ 
with respect to $h$ and $\omega$, 
we have 
$$
* u \in H^{0}(X, \Omega_{X}^{n-q}\otimes G \otimes \I{h}), 
$$
where $*$ is the Hodge star operator with respect to $\omega$.   
\end{theo}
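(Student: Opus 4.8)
The plan is to reduce the statement to two assertions about the $(n-q,0)$-form $\ast u$ on the Zariski open set $Y\subset X$ on which $h$ is smooth: that $\ast u$ is holomorphic on $Y$, and that its pointwise $h$-norm is globally square-integrable, so that $\ast u$ extends across $Z:=X\setminus Y$ to a holomorphic section of $\Omega_X^{n-q}\otimes G$ lying in $\I{h}$. The extension-and-membership half is the easier one: since $\ast$ is a pointwise isometry we have $\|\ast u\|_{h,\omega}=\|u\|_{h,\omega}<\infty$, and because a local weight of $h$ is plurisubharmonic (hence locally bounded above) the form $\ast u$ is locally square-integrable against a smooth metric; a holomorphic form defined off the proper analytic set $Z$ with locally $L^2$ coefficients extends holomorphically across $Z$, as the $L^2$ bound rules out poles, and the finiteness of $\|\ast u\|_{h,\omega}$ is precisely the defining condition for membership in $\I{h}$ (Definition \ref{multi}). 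Everything therefore hinges on proving $\dbar(\ast u)=0$ on $Y$.

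To establish holomorphicity I would use the Bochner--Kodaira--Nakano identity of Proposition \ref{BKN}. The obstruction is that $\omega$ is \emph{not} complete on $Y$, so the Hilbert-space adjoint need not agree with the formal adjoint and Proposition \ref{BKN} cannot be applied to $(h,\omega)$ directly. I would therefore work with a family of complete K\"ahler forms $\ome_\delta\searrow\omega$ on $Y$ satisfying properties (B) and (C)---Poincar\'e-type metrics, whose potentials stay bounded while the metrics are complete near $Z$. For each $\delta$ the pair $(h,\ome_\delta)$ has $h$ smooth and $\ome_\delta$ complete on $Y$, so Proposition \ref{BKN} is available. Via the De Rham--Weil machinery of Propositions \ref{decomp} and \ref{comm} and the orthogonality of Proposition \ref{orth}, I would take $u_\delta$ to be the $(h,\ome_\delta)$-harmonic representative of the cohomology class of $u$, so that $\dbar u_\delta=0$ and $\dbar^{*}_{h,\ome_\delta}u_\delta=0$.

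For such a harmonic $u_\delta$ the identity of Proposition \ref{BKN} collapses to $0=\|{D'}^{*}_{h,\ome_\delta}u_\delta\|_{h,\ome_\delta}^{2}+\lla \sqrt{-1}\Theta_{h}(G)\Lambda_{\ome_\delta}u_\delta,u_\delta\rra_{h,\ome_\delta}$, since $D'_h u_\delta$ has bidegree $(n+1,q)$ and hence vanishes. Semi-positivity of the curvature makes the commutator term nonnegative on $(n,q)$-forms, so both summands vanish; in particular ${D'}^{*}_{h,\ome_\delta}u_\delta=0$, and the formal identity ${D'}^{*}_{h,\ome_\delta}=-\ast_{\ome_\delta}\dbar\ast_{\ome_\delta}$ then yields $\dbar(\ast_{\ome_\delta}u_\delta)=0$, i.e. $\ast_{\ome_\delta}u_\delta$ is holomorphic on $Y$.

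It remains to pass to the limit $\delta\to0$. The special behavior of $(n,q)$- and $(n-q,0)$-forms under a change of K\"ahler metric (Lemma \ref{nq}) should provide uniform $L^2$ bounds on $u_\delta$ and on $\ast_{\ome_\delta}u_\delta$, and the weak-compactness Lemmas \ref{w-clo} and \ref{w-con} let me extract a weak limit that is $\dbar$-closed; the content is to identify this limit with $\ast_\omega u$ and the limiting class with $u$ itself. I expect this reconciliation of the auxiliary Hodge stars $\ast_{\ome_\delta}$ with the genuine one $\ast_\omega$---transferring holomorphicity from the complete metrics $\ome_\delta$ back to the K\"ahler form $\omega$ while controlling the metric-dependent harmonic projections and adjoints---to be the main obstacle. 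Once $\dbar(\ast_\omega u)=0$ is secured on $Y$, the extension and $\I{h}$-membership from the first paragraph finish the proof.
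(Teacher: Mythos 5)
Your plan follows the same route as the paper's proof: approximate $\omega$ by complete K\"ahler forms $\omega_{\delta}=\omega+\delta\ome$ on the smooth locus $Y$, let $u_{\delta}$ be the $(h,\omega_{\delta})$-harmonic projection of $u$, apply Proposition \ref{BKN} (legitimate since $\omega_{\delta}$ is complete) to get ${D'}^{*}_{h,\omega_{\delta}}u_{\delta}=0$ and hence $\dbar(*_{\delta}u_{\delta})=0$ on $Y$, and finish with the uniform $L^{2}$ bound from Lemma \ref{nq}, the Riemann extension theorem, and the definition of $\I{h}$. Those parts of your write-up are correct and coincide with the paper's Proposition \ref{hol} and the closing argument.

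However, the step you explicitly defer --- identifying the weak limit of $u_{\delta}$ with $u$ and the limit of $*_{\delta}u_{\delta}$ with $*u$ --- is not a loose end but the mathematical core of the proof (the paper's Propositions \ref{limit}, \ref{limit2}, and \ref{agree}), and it needs an idea your proposal does not supply. The defect $w_{\delta}:=u-u_{\delta}$ lies in $\Image\dbar$ computed in $L^{n,q}_{(2)}(G)_{h,\omega_{\delta_{0}}}$ for any fixed $\delta_{0}\geq\delta$, and its weak limit $w_{0}$ stays there by Proposition \ref{decomp} and Lemma \ref{w-clo}; but this is \emph{not} $\Image\dbar$ with respect to $\omega$, so harmonicity of $u$ alone does not give $\lla u, w_{0}\rra_{h,\omega}=0$: a $\dbar$-potential of $w_{0}$ may fail to be $\omega$-square-integrable (Remark \ref{rem-orth}). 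Exactly this is what Proposition \ref{orth} provides, via the De Rham--Weil isomorphisms of Propositions \ref{decomp} and \ref{comm}; you invoke Proposition \ref{orth} only when constructing $u_{\delta}$ (where Proposition \ref{decomp} already suffices), not here, where it is indispensable. With that orthogonality, Fatou's lemma ($\|u_{0}\|_{\omega}\leq\|u\|_{\omega}$) and Pythagoras force $w_{0}=0$, i.e. $u_{0}=u$. Note also that the weak limits must be extracted in each fixed space $L^{n,q}_{(2)}(G)_{h,\omega_{\delta'}}$ by a diagonal argument, since the $u_{\delta}$ are not known to lie in $L^{n,q}_{(2)}(G)_{h,\omega}$ at all; that $u_{0}$ does lie there only comes out a posteriori. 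Finally, identifying $\lim_{\delta\to 0}*_{\delta}u_{\delta}$ with $*u$ requires more than weak compactness: the paper uses Montel's theorem (local sup bounds on the holomorphic forms $f_{\delta}=*_{\delta}u_{\delta}$ coming from their uniform $L^{2}$ bounds) to get locally uniform convergence $f_{\delta}\to f_{0}$, an eigenvalue comparison of $\omega_{\delta}$ against $\omega$ showing $|*_{\delta}f_{\delta}-*f_{\delta}|_{\omega}\to 0$ locally uniformly, and uniqueness of weak limits to conclude $u=u_{0}=(-1)^{n+q}*f_{0}$, whence $*u=f_{0}\in H^{0}(X,\Omega^{n-q}_{X}\otimes G\otimes\I{h})$. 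As it stands, your proposal is a correct outline with the decisive step acknowledged but missing.
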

\begin{proof}

Let $Y$ be a non-empty Zariski open set in $X$ such that $h$ is smooth on $Y$. 
We first take a complete K\"ahler form  $\ome$ on $Y$ 
with the following properties\,: 
\begin{itemize}
\item[$\bullet$] $\ome$ is a complete K\"ahler form on $Y$.
\item[$\bullet$] $\ome \geq \omega $ on $Y$. 
\item[$\bullet$] For every point $p \in X$, 
there exists a bounded function $\Phi$ on an open neighborhood of $p$ in $X$ 
such that $\omega = \deldel \Phi$.     
\end{itemize} 
See \cite[Section 3]{Fuj12a} for the construction of $\ome$. 
For the K\"ahler form $\omega_{\delta}$ on $Y$ defined by  
$$
\omega_{\delta}:=\omega + \delta \ome 
\, \text{ for }\, \delta>0, 
$$
it is easy to check the following properties\,: 
\begin{itemize}
\item[(A)] $\omega_{\delta}$ is a complete K\"ahler form on $Y$ for every  $\delta>0$.
\item[(B)] $\omega_{\delta_{2}} \geq  \omega_{\delta_{1}} \geq \omega $ on $Y$ 
for $\delta_{2} \geq \delta_{1} > 0$. 
\item[(C)] For every point $p \in X$, 
there exists a bounded function $\Phi_{\delta}$ on an open neighborhood of $p$ in $X$ 
such that $\omega_{\delta} = \deldel \Phi_{\delta}$.  
\end{itemize} 
Note that we can apply Proposition \ref{BKN} for $\omega_{\delta}$ thanks to property (A).  
In the proof of Theorem \ref{Lef}, 
we will omit the subscription $h$ of the norm, the $L^2$-space, and so on. 
For example, we will use the notations 
$$
\|\bullet \|_{\omega} := \|\bullet \|_{h, \omega}, \ 
\|\bullet \|_{\omega_{\delta}} := \|\bullet \|_{h, \omega_{\delta}}, \text{ and }
L^{n,q}_{(2)}(G)_{\omega_{\delta}}:=L^{n,q}_{(2)}(G)_{h,\omega_{\delta}}. 
$$
It follows that 
 \begin{align}\label{a}
\|u\|_{\omega_{\delta}} \leq \|u\|_{\omega} < \infty 
\end{align}
from Lemma \ref{nq} and property (B). 
In particular $u$ belongs to $L^{n,q}_{(2)}(G)_{\omega_{\delta}}$ 
for every $\delta>0$. 
By the orthogonal decomposition (see Proposition \ref{orth}) 
$$
L^{n,q}_{(2)}(G)_{\omega_{\delta}}= 
\Image \dbar \, \oplus 
\mathcal{H}^{n,q}_{\omega_{\delta}} (G) \, \oplus 
\Image \dbar_{\omega_{\delta}}^{*}, 
$$
the $G$-valued $(n,q)$-form $u$ can be decomposed as follows\,:
$$
u=w_{\delta}+u_{\delta} \ \text{ for some } 
w_{\delta} \in \Image \dbar \ \text{ and }\  
u_{\delta} \in \mathcal{H}^{n,q}_{\omega_{\delta}}(G) 
\ \text{ in } L^{n,q}_{(2)}(G)_{\omega_{\delta}}.
$$
The strategy of the proof is as follows\,: 
In the first step, 
we check that $u_\delta$ weakly converges to some $u_{0}$ in suitable $L^2$-spaces. 
In the second step, 
we show that the limit $u_{0}$ actually coincides with $u$ 
by Proposition \ref{orth}. 
In the third step, 
we prove that 
$*_{\delta} u_{\delta} \in H^{0}(X, \Omega_{X}^{n-q}\otimes G \otimes \I{h}) $ 
by the theory of harmonic integrals 
and $*_{\delta} u_{\delta}$ converges to $*u_{0}=*u$, 
where $*_{\delta}$ (resp. $*$) is the Hodge star operator 
with respect to $\omega_{\delta}$ (resp. $\omega$). 

We first check that $u_{\delta}$ has a a suitable weak limit 
by the following proposition. 
Since we use Cantor's diagonal argument in the proof of Proposition \ref{limit}, 
we need to handle only a countable sequence $\{\delta'\}_{\delta'>0}$. 

\begin{prop}\label{limit}
For a countable sequence $\{\delta'\}_{\delta'>0}$ converging to zero, 
there exist a subsequence $\{\delta_{\nu}\}_{\nu=1}^{\infty}$ of 
$\{\delta\}_{\delta>0}$ and $u_{0} \in L^{n,q}_{(2)}(G)_{\omega}$
with the following properties\,$:$
\begin{itemize}
\item[$\bullet$] For every  $\delta'>0$, 
as $\delta_{\nu}$ goes to $0$, 
$$
u_{\delta_{\nu}} \text{ converges to } u_{0} 
\text{ with respect to the weak } \text{topology in } 
L^{n,q}_{(2)}(G)_{\omega_{\delta'}}. 
$$ \item[$\bullet$] $\| u_{0} \|_{\omega}\leq \|u\|_{\omega}. $
\end{itemize}
\end{prop}

\begin{rem}\label{rem-limit} 
The subsequence $\{\delta_{\nu} \}_{\nu=1}^{\infty}$ and 
the weak limit $u_{0}$ do not depend on $\delta'$. 
The $G$-valued $(n,q)$-form $u_{\delta_{\nu}}$ weakly converges to $u_{0}$ 
in $L^{n,q}_{(2)}(G)_{\omega_{\delta'}}$, 
but we do not know whether $u_{\delta_{\nu}}$ weakly converges 
to $u_{0}$ in $L^{n,q}_{(2)}(G)_{\omega}$. 
\end{rem}

\begin{proof}
For a given $\delta'>0$, 
the sequence $\{u_{\delta}\}_{\delta' \geq \delta >0}$ 
is bounded in $L^{n,q}_{(2)}(G)_{\omega_{\delta'}}$. 
Indeed, for $\delta' \geq \delta >0$, 
we obtain 
\begin{align}\label{b}
\|u_{\delta}\|_{\omega_{\delta'}} \leq 
\|u_{\delta}\|_{\omega_{\delta}} \leq 
\|u\|_{\omega_{\delta}} \leq 
\|u\|_{\omega} < \infty. 
\end{align}
The first inequality follows 
from Lemma \ref{nq} and $\omega_{\delta'} \geq \omega_{\delta}$, 
the second inequality follows 
since $u_{\delta}$ is the orthogonal projection of $u$ in $L^{n,q}_{(2)}(G)_{\omega_{\delta}}$, 
and the third inequality follows from inequality (\ref{a}). 
Hence there exists a subsequence $\{\delta_{\nu}\}_{\nu=1}^{\infty}$ 
of $\{\delta\}_{\delta>0}$ such that 
$u_{\delta_\nu}$ weakly converges to some $u_{0, \delta'}$ in 
$L^{n,q}_{(2)}(G)_{\omega_{\delta'}}$, which may depend on $\delta'$. 
We can choose a suitable subsequence independent of $\delta'$ 
by Cantor's diagonal argument, 
and thus we can assume that 
this subsequence $\{\delta_{\nu}\}_{\nu=1}^{\infty}$ 
is independent of $\delta'$.

Now we show that the weak limit $u_{0, \delta'}$ is also independent of $\delta'$. 
For any $\delta_1 \geq \delta_2 $, 
the natural inclusion 
$L^{n,q}_{(2)}(G)_{\omega_{\delta_2}} 
\rightarrow L^{n,q}_{(2)}(G)_{\omega_{\delta_1}}$ 
is a bounded operator (continuous linear map) 
by Lemma \ref{nq} and $\omega_{\delta_1 } \geq \omega_{\delta_2}$. 
By Lemma \ref{w-con}, we can see that 
$u_{\delta_{\nu}}$ weakly converges to $u_{0, \delta_2}$ 
not only in $L^{n,q}_{(2)}(G)_{\omega_{\delta_2}}$ but also in 
$L^{n,q}_{(2)}(G)_{\omega_{\delta_1}}$. 
Therefore it follows that $u_{0, \delta_1}=u_{0, \delta_2}$ 
since $u_{\delta_{\nu}}$ weakly converges to $u_{0, \delta_1}$ in 
$L^{n,q}_{(2)}(G)_{\omega_{\delta_1}}$ and the weak limit is uniquely determined. 

Finally we estimate the $L^2$-norm of the weak limit $u_{0}$. 
Fatou's lemma yields 
$$
\|u_{0}\|^2_{\omega} = \int_{Y} |u_{0}|^2_{\omega}\, dV_{\omega}\leq
\liminf_{\delta' \to 0} 
\int_{Y} |u_{0}|^2_{\omega_{\delta'}}\, dV_{\omega_{\delta'}}=
\liminf_{\delta' \to 0} \|u_{0}\|^2_{\omega_{\delta'}}.  
$$
On the other hand, 
it is easy to see that 
$$
\|u_{0}\|_{\omega_{\delta'}} \leq 
\liminf_{\delta_{\nu} \to 0} 
\|u_{\delta_{\nu}}\|_{\omega_{\delta'}} \leq 
\liminf_{\delta_{\nu} \to 0} 
\|u_{\delta_{\nu}}\|_{\omega_{\delta_{\nu}}} \leq 
\|u\|_{\omega} < \infty.
$$
The first inequality follows since 
the norm is lower semi-continuous with respect to the weak convergence, 
the second inequality follows from Lemma \ref{nq} and 
$\omega_{\delta_{\nu}} \leq \omega_{\delta'} $, 
the third inequality follows from inequality (\ref{b}). 
These inequalities lead to 
the desired inequality $\| u_{0} \|_{\omega}\leq \|u\|_{\omega}$. 
\end{proof}

For simplicity, we use the same notation $\{u_{\delta}\}_{\delta>0}$ 
for the subsequence $\{u_{\delta_{\nu}}\}_{\delta_{\nu}>0}$ 
chosen in Proposition \ref{limit}. 
The following proposition is obtained from Proposition \ref{orth}.

\begin{prop}\label{limit2}
The weak limit $u_{0}$ coincides with $u$. 
\end{prop}

\begin{proof}
We fix $\delta_{0}>0$ in the proof of Proposition \ref{limit2}. 
By Lemma \ref{nq}, we can see that  
$$
\Image \dbar \text{ in } L^{n,q}_{(2)}(G)_{\omega_{\delta}} 
\subset 
\Image \dbar \text{ in } L^{n,q}_{(2)}(G)_{\omega_{\delta_{0}}}
$$
for an arbitrary $\delta$ with $\delta_{0} \geq \delta >0$. 
Hence, it follows that 
$$
u-u_{\delta}=w_{\delta}
\in \Image \dbar \text{ in } L^{n,q}_{(2)}(G)_{\omega_{\delta_{0}}}  
$$
from the construction of $u_{\delta}$ and $w_{\delta}$. 
The subspace $\Image \dbar$ is closed not only with respect to 
the $L^2$-topology but also with respect to the weak topology 
(see Proposition \ref{decomp} and Lemma \ref{w-clo}). 
By taking the weak limit, we can conclude that 
$$
w_{0}:=u-u_{0}= \text{\rm{w}-}\lim_{\hspace{-0.6cm}\delta \to 0}w_{\delta} 
\in \Image \dbar \text{ in } L^{n,q}_{(2)}(G)_{\omega_{\delta_{0}}}. 
$$
Since the K\"ahler form $\omega_{\delta_{0}}$ on $Y$ satisfies 
properties (B) and (C), 
we have $\lla u, w_{0} \rra_{\omega}=0$ by Proposition \ref{orth}, 
where $w_{0}$ is the weak limit of $w_{\delta}=u-u_{\delta}$. 
Hence we obtain $\|u_{0}\|^2_{\omega}=\|u\|^2_{\omega}+\|w_{0}\|^2_{\omega}$. 
This is a contradiction to 
the inequality $\|u_{0} \|_{\omega} \leq \|u \|_{\omega}$ in Proposition \ref{limit}
if $w_{0}$ is not zero. 
Therefore $w_{0}$ is actually zero. 
We obtain the desired conclusion $u=u_{0}$. 
\end{proof}

From now on, we consider the Hodge star operator $*_{\delta}$ with respect to $\omega_{\delta}$ 
and the $G$-valued $(n-q,0)$-form $*_{\delta}u_{\delta}$. 
Note that $*_{\delta}u_{\delta}$ is a $G$-valued $(n-q, 0)$-form 
on $Y$ (not $X$)  
since the K\"ahler form $\omega_{\delta}$ is defined only on $Y$. 
However, by the following proposition, 
we can regard $*_{\delta}u_{\delta}$ 
as a holomorphic $G$-valued $(n-q,0)$-form on $X$.

\begin{prop}\label{hol}
The $G$-valued $(n-q, 0)$-form $*_{\delta}u_{\delta}$ 
can be extended to a holomorphic $G$-valued $(n-q,0)$-form on $X$ 
$($that is, $\dbar *_{\delta}u_{\delta} =0$ on $X$$)$.
Moreover we have 
$$
\|*_{\delta}u_{\delta} \|_{\omega} \leq \|u\|_{\omega} < \infty. 
$$ 
In particular, we have 
$$*_{\delta}u_{\delta} \in 
H^{0}(X, \Omega_{X}^{n-q}\otimes G \otimes \I{h}). 
$$
\end{prop}
\begin{proof}
Let $*_{\delta} u_{\delta}=\sum_{J} f_{J} dz_{J}$ 
be a local expression in terms of a local coordinate 
$(z_{1}, z_{2}, \dots, z_{n})$, where $J$ is 
an ordered multi-index with degree $(n-q)$. 
We will show that every coefficient $f_{J}$ is holomorphic on $Y$ and 
can be extended to a holomorphic function on $X$.

Since $\omega_{\delta}$ is a complete K\"ahler form on $Y$, 
we can apply Proposition \ref{BKN} to $u_{\delta}$. 
Proposition \ref{BKN} yields 
\begin{align}\label{c}
0=\|\dbar u_{\delta} \|^{2}_{\omega_{\delta}}+
\|\dbar_{\delta}^{*} u_{\delta} \|^{2}_{\omega_{\delta}}=
\|D'^*_{\delta} u_{\delta} \|^{2}_{\omega_{\delta}} + 
\lla \sqrt{-1}\Theta_{h}(G)\Lambda_{\omega_{\delta}}u_{\delta},  
u_{\delta} \rra_{\omega_{\delta}}. 
\end{align}
The first equality follows 
since $u_{\delta}$ is harmonic with respect to $\omega_{\delta}$. 
Here $D'^*_{\delta}$ denotes the Hilbert space adjoint of 
the $(1,0)$-part of the Chern connection $D_{h}=D'_{h}+\dbar$ 
and $\Lambda_{\omega_{\delta}}$ denotes the adjoint operator of the wedge product 
$\omega_{\delta} \wedge \bullet$.

The second term of the right hand side is non-negative 
by the assumption $\sqrt{-1}\Theta_{h}(G) \geq 0$, 
and thus the first term and the second term must be zero. 
In particular we obtain $|D'^*_{\delta} u_{\delta}|_{\omega_{\delta}}=0$
by
$$
0=\|D'^*_{\delta} u_{\delta} \|^{2}_{\omega_{\delta}} = 
\int_{Y} 
|D'^*_{\delta} u_{\delta} |^{2}_{\omega_{\delta}}\, dV_{\omega_{\delta}}. 
$$
The Hilbert space adjoint coincides with the formal adjoint 
since $\omega_{\delta}$ is complete 
(see, for example, \cite[(3.2) Theorem 
in Chapter  V\hspace{-.1em}I\hspace{-.1em}I\hspace{-.1em}I]{Dem-book}). 
Hence we have $D'^*_{\delta}=-*_{\delta}\dbar *_{\delta}$. 
It follows that 
$$
0\equiv |D'^*_{\delta} u_{\delta} |_{\omega_{\delta}}=
|-*_{\delta}\dbar *_{\delta} u_{\delta}|_{\omega_{\delta}}=
|\dbar *_{\delta} u_{\delta}|_{\omega_{\delta}}
$$
since the Hodge star operator $*_{\delta}$ preserves 
the point-wise norm $|\bullet|_{\omega_{\delta}}$. 
Therefore the $G$-valued $(n-q,0)$-form $*_{\delta} u_{\delta}$ is $\dbar$-closed on $Y$, 
that is, the coefficient $f_{J}$ is a holomorphic function on $Y$.

Now we show that the $L^2$-norm of the coefficient $f_{J}$ 
with respect to $h$ is uniformly bounded     
(that is, $\int |f_{J}|^{2}_{h}\, dV_{\omega} < C$ for some $C>0$).  
The key point here is the following inequality\,:
\begin{align}\label{d}
\|*_{\delta} u_{\delta}\|_{\omega} \leq 
\|*_{\delta} u_{\delta}\|_{\omega_{\delta}} = 
\|u_{\delta}\|_{\omega_{\delta}} \leq 
\|u\|_{\omega}<\infty. 
\end{align}
The first inequality follows from 
the fourth claim of Lemma \ref{nq} and $\omega \leq \omega_{\delta}$, 
the second inequality follows since $*_{\delta}$ preserves 
the point-wise norm $|\bullet|_{\omega_{\delta}}$, 
the third inequality follows from inequality (\ref{b}). 
On the other hand, 
there is a constant $C'$ (independent of $\delta$) 
such that $|f_{J}|^2_h \leq C' |*_{\delta} u_{\delta}|^2_{\omega}$. 
Indeed, by  the first claim of Lemma \ref{nq}, 
we can easily check that 
\begin{align*}
|f_{J} |_{h} \inf 
(|dz_{J} \wedge dz_{\hat{J}} \wedge d\overline{z}|_{\omega}) &\leq 
|f_{J} dz_{J}\wedge dz_{\hat{J}} \wedge d\overline{z}|_{h,\omega} \\ &= 
|*_{\delta} u_{\delta} \wedge dz_{\hat{J}} \wedge d\overline{z}|_{h,\omega} \\ &\leq
C''|*_{\delta} u_{\delta}|_{\omega} \sup (|dz_{\hat{J}} \wedge d\overline{z}|_{\omega}),  
\end{align*}
for some positive constant $C''$ (independent of $\delta$), 
where $\hat{J}$ is the complementary index of $J$. 
By combining with inequality (\ref{d}), 
we obtain 
$$
\int |f_{J}|^{2}_{h}\, dV_{\omega} \leq 
C'\|*_{\delta} u_{\delta}\|^2_{\omega} \leq  
C'\|u\|^2_{\omega}
$$
Therefore, by the Riemann extension theorem, 
the coefficient $f_{J}$ can be extended to a holomorphic function on $X$. 
\end{proof}

We put $f_{\delta}:=*_{\delta}u_{\delta}$ and 
consider a local expression 
$f_{\delta} = *_{\delta} u_{\delta}=\sum_{J} f_{\delta, J} dz_{J}$ again. 
By the proof of Proposition \ref{hol}, we can see that 
the $L^2$-norm of the coefficient $f_{\delta, J}$ 
is uniformly bounded with respect to $\delta$. 
Hence, by Montel's theorem, there exists a subsequence $\{\delta_{\nu}\}_{\nu=1}^{\infty}$ 
of $\{\delta\}_{\delta>0}$ such that 
$f_{\delta_{\nu}}=*_{\delta_{\nu}}u_{\delta_{\nu}}$ uniformly converges to some $f_{0}$,  
that is,  the local sup-norm 
$\sup |f_{\delta_{\nu}, J} - f_{0, J}|$ converges to zero, 
where $f_{0, J}$ is the coefficient of $f_{0}=\sum_{J} f_{0, J} dz_{J}$. 
Then the $L^2$-norm $\|f_{\delta_{\nu}} - f_{0} \|_{h,\omega}$ 
also converges to zero (for example see \cite[Lemma 5.2]{Mat16}). 
In particular, the limit $f_{0}$ satisfies 
$f_{0} \in H^{0}(X, \Omega_{X}^{n-q}\otimes G \otimes \I{h})$. 
For simplicity we use the same notation $\{f_{\delta}\}_{\delta>0}$ for this subsequence. 
Then we show that $u_{0}$
(which is the weak limit obtained in Proposition \ref{limit}) 
coincides with $(-1)^{n+q} *f_{0}$. 

\begin{prop}\label{agree}
The weak limit $u_{0}$ coincides with $(-1)^{n+q}* f_{0}$. 
In particular, we can see that $u= (-1)^{n+q}*f_{0}$ by Proposition \ref{limit2}. 
\end{prop}
\begin{proof}
For a contradiction, 
we assume that $u_{0} \not = (-1)^{n+q}* f_{0}$ 
in $L^{n,q}_{(2)}(G)_{\omega_{\delta'}}$. 
Since the smooth $G$-valued $(n,q)$-forms with compact support in $Y$ 
is dense in $L^{n,q}_{(2)}(G)_{\omega_{\delta'}}$, 
there exists a smooth $G$-valued $(n,q)$-form $\eta$ with compact support in $Y$ 
such that 
$\lla u_{0}, \eta \rra_{\omega_{\delta'}}\not= 
\lla (-1)^{n+q}* f_{0}, \eta \rra_{\omega_{\delta'}}$. 
Since $u_{\delta}$ weakly converges to $u_{0}$ in 
$L^{n,q}_{(2)}(G)_{\omega_{\delta'}}$, 
we have 
$\lla u_{0}, \eta \rra_{\omega_{\delta'}}= 
\lim_{\delta \to 0}\lla u_{\delta}, \eta \rra_{\omega_{\delta'}}$. 
On the other hand, 
it follows that $*_{\delta}f_{\delta}$ uniformly converges $*f_{0}$ 
on every relatively compact set in $Y$
since $f_{\delta}$ uniformly converges $f_{0}$ and 
$\omega_{\delta}$ uniformly converges $\omega$ 
on every relative compact set in $Y$. 
Indeed, it is sufficient to consider $(*_{\delta}f_{\delta}- *f_{\delta})$ 
since we have   
\begin{align*}
&*_{\delta}f_{\delta}-*f_{0}=
(*_{\delta}f_{\delta}- *f_{\delta}) + (*f_{\delta} -*f_{0}),\\
&\sup_{X}|*f_{\delta} -*f_{0}|_{\omega}=\sup_{X}|f_{\delta} -f_{0}|_{\omega} 
\to 0. 
\end{align*}
For a relatively compact set $K$ in $Y$ and a given point $x \in K$, 
we take a local coordinate $(z_{1}, z_{2}, \dots, z_{n})$ 
centered at $x \in K$ such that 
$$
\omega=\frac{\sqrt{-1}}{2}\sum_{i=1}^{n} dz_{i} \wedge  d\overline{z_{i}} 
\text{ \quad and  \quad } 
\ome=\frac{\sqrt{-1}}{2}\sum_{i=1}^{n} \lambda_{i} dz_{i} \wedge  d\overline{z_{i}} \text{\quad at } x.
$$
By $K \Subset Y$, there exists a positive constant $C$ such that 
$0 \leq \ome \leq C \omega$ on $K$.
In particular we have $0 \leq \lambda_{i} \leq C$. 
Note that the eigenvalues of $\omega_{\delta}$ with respect to $\omega$ 
are $\{(1+\delta \lambda_{i})\}_{i=1}^{n}$. 
When $f_{\delta}$ is locally written as $f_{\delta}=\sum_{J} f_{\delta,J} dz_{J}$, 
we can easily see that 
\begin{align*}
|*_{\delta}f_{\delta}- *f_{\delta}|_{\omega} 
&=|\sum_{J} f_{\delta,J} (*_{\delta} dz_{J} -*dz_{J}) |_{\omega}
\\
&=|\sum_{J} f_{\delta,J}\   {\rm{sign}}(J\hat{J})\  \big\{\Pi_{i \in \hat{J}}
(1+\delta \lambda_{i}) -1 \big\}
dz_{(1,2,\dots,n)} \wedge d\overline{z_{\hat{J}}}|_{\omega}\\
&\leq \delta C' \sum_{J} \sup_{K}|f_{\delta,J}| |dz_{(1,2,\dots,n)} \wedge d\overline{z_{\hat{J}}}|_{\omega}
\end{align*}
for some constant $C'$. 
The coefficient $f_{\delta,J}$ is a holomorphic function, 
and thus the (local) sup-norm $\sup_{K}|f_{\delta,J}|$
of  $f_{\delta,J}$ can be bounded by 
the $L^2$-norm. 
Further  the $L^2$-norm of $f_{\delta,J}$ is uniformly bounded with respect to $\delta$ 
(see Proposition \ref{hol}). 
Therefore $(*_{\delta}f_{\delta}- *f_{\delta})$ uniformly 
converges to zero on $K \Subset Y$. 
Hence, by the definition of $f_{\delta} = *_{\delta} u_{\delta}$, 
we obtain 
\begin{align*}
\lla (-1)^{n+q}* f_{0}, \eta \rra_{\omega_{\delta'}}&= 
\lim_{\delta \to 0}\lla (-1)^{n+q}*_{\delta} f_{\delta}, 
\eta \rra_{\omega_{\delta'}}\\ &=
\lim_{\delta \to 0}\lla (-1)^{n+q}*_{\delta}*_{\delta} u_{\delta}, 
\eta \rra_{\omega_{\delta'}}\\
&=\lim_{\delta \to 0}\lla  u_{\delta}, 
\eta \rra_{\omega_{\delta'}}. 
\end{align*}
This is a contradiction to $\lla u_{0}, \eta \rra_{\omega_{\delta'}}\not= 
\lla (-1)^{n+q}* f_{0}, \eta \rra_{\omega_{\delta'}}$. 
Therefore we can conclude that $u_{0} = (-1)^{n+q}* f_{0}$ 
in $L^{n,q}_{(2)}(G)_{\omega_{\delta'}}$ for every $\delta'>0$. 
Then, by Fatou's lemma, we can easily see that 
\begin{align*}
\| u_{0} - (-1)^{n+q}* f_{0} \|_{\omega} \leq 
\liminf_{\delta' \to 0} \| u_{0} - (-1)^{n+q}* f_{0} \|_{\omega_{\delta'}}=0. 
\end{align*}
\end{proof}
By $f_{0} \in H^{0}(X, \Omega_{X}^{n-q}\otimes G \otimes \I{h})$, 
we obtain the desire conclusion
$$
* u =  (-1)^{n+q}**f_{0}=f_{0} \in H^{0}(X, \Omega_{X}^{n-q}\otimes G \otimes \I{h}) 
$$
in Proposition \ref{Lef}. 
This completes the proof. 
\end{proof}

\subsection{Proof of Theorem \ref{key}}\label{Sec3-2}
In this subsection, we prove Theorem \ref{key}.

\begin{theo}[=Theorem \ref{key}]\label{keyy}
Let $D$ be a simple normal crossing divisor on a compact K\"ahler manifold $X$. 
Let $F$ $($resp. $M$$)$ be a $($holomorphic$)$ line bundle on $X$ 
with a smooth hermitian metric $h_{F}$  $($resp. $h_{M}$$)$ 
such that 
\begin{equation*}
\sqrt{-1}\Theta_{h_M}(M)\geq 0 \  \text{ and }\ 
\sqrt{-1}(\Theta_{h_F}(F)-t \Theta 
_{h_M}(M))\geq 0 
\ \text{ for some }t>0. 
\end{equation*}
We consider the map 
\begin{equation*}
\Phi_{D} : 
H^{q}(X, K_{X} \otimes F) 
\xrightarrow{\quad} 
H^{q}(X, K_{X} \otimes D \otimes F )
\end{equation*}
induced by the natural inclusion $\OX_{X} \hookrightarrow \OX_{X}(D)$. 
Then, the multiplication map on the image $\Image \Phi_{D}$ 
induced by the tensor product with $s$ 
\begin{equation*}
\Image \Phi_{D}
\xrightarrow{\quad \otimes s \quad } 
H^{q}(X, K_{X} \otimes D \otimes F\otimes M )
\end{equation*}
is injective for every $q$.\end{theo}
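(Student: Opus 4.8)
The plan is to run Enoki's harmonic-integral argument, using the canonical singular metric attached to $D$ to convert the log situation into a semi-positively curved (but singular) one, and to feed in Theorem \ref{Lef} to control the Hodge duals. First I would equip $G:=\OX_X(D)\otimes F$ with the singular metric $h_G:=h_Dh_F$, where $h_D=1/|t_D|^2$ is the singular metric of $D$ determined by its defining section $s_D$ (so $|s_D|_{h_D}\equiv 1$, the curvature $\sqrt{-1}\Theta_{h_G}(G)=[D]+\sqrt{-1}\Theta_{h_F}(F)$ is a semi-positive current that is smooth on $Y:=X\setminus\Supp D$, and $\I{h_G}=\OX_X(-D)$). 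Since $K_X\otimes G\otimes\I{h_G}=K_X\otimes F$, the De Rham--Weil machinery of Propositions \ref{decomp} and \ref{comm} identifies $H^{q}(X,K_X\otimes F)$ with the harmonic space $\mathcal{H}^{n,q}_{h_G,\omega}(G)$, and identifies $\Phi_D$ with the natural map $j$ of Proposition \ref{comm}(2) induced by $\I{h_G}\hookrightarrow\OX_X$ to the $L^2$-model of $H^{q}(X,K_X\otimes D\otimes F)$ built from a smooth metric on $G$. This is exactly where the restriction to $\Image\Phi_D$ enters: only classes coming from $H^{q}(X,K_X\otimes F)$ possess a harmonic representative with respect to the semi-positively curved metric $h_G$, to which Theorem \ref{Lef} applies.

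Next, given $\alpha=\Phi_D(\beta)\in\Image\Phi_D$ with $s\alpha=0$, I would represent $\beta$ by an $\omega$-harmonic form $u\in\mathcal{H}^{n,q}_{h_G,\omega}(G)$, produced exactly as in the proof of Theorem \ref{Lef} through the complete K\"ahler forms $\omega_\delta=\omega+\delta\ome$ on $Y$ and a weak-limit argument. Theorem \ref{Lef} then gives $\ast u\in H^{0}(X,\Omega_X^{n-q}\otimes G\otimes\I{h_G})=H^{0}(X,\Omega_X^{n-q}\otimes F)$; in particular $\ast u$ has no poles along $D$, so $s\cdot \ast u$ is a global holomorphic $(n-q,0)$-form with values in $F\otimes M$. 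The core is then the Enoki-type computation showing that $su$ is harmonic for $h_Gh_M$ and $\omega$: one has $\dbar(su)=0$ since $s$ is holomorphic and $\dbar u=0$; one has $\ast$-adjoint vanishing $\,{-}\ast\dbar\ast(su)={-}\ast\dbar(s\cdot \ast u)=0$ since $s\cdot \ast u$ is holomorphic; and the curvature term vanishes because harmonicity of $u$ and Proposition \ref{BKN} force $\langle\sqrt{-1}\Theta_{h_F}(F)\Lambda_\omega u,u\rangle\equiv 0$, whence the hypothesis $\sqrt{-1}(\Theta_{h_F}(F)-t\Theta_{h_M}(M))\ge 0$ together with $\sqrt{-1}\Theta_{h_M}(M)\ge 0$ and the positivity of the bracket on $(n,q)$-forms yields $\langle\sqrt{-1}\Theta_{h_M}(M)\Lambda_\omega u,u\rangle\equiv 0$ as well. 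Reassembling these through Proposition \ref{BKN} (applied on $Y$ via the complete $\omega_\delta$) gives $su\in\mathcal{H}^{n,q}_{h_Gh_M,\omega}(G\otimes M)$. The comparison constant $t>0$ is precisely what transfers the vanishing of the $F$-curvature term to the $M$-curvature term, which is the step that fails for a general semi-positive $M$ without such a hypothesis.

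Finally I would translate $s\alpha=0$, via the commutative diagram of Proposition \ref{comm}(2) for $G\otimes M$, into the statement that $su$ is $\dbar$-exact in the $L^2$-complex built from a smooth metric on $D\otimes F\otimes M$ (the model computing $H^{q}(X,K_X\otimes D\otimes F\otimes M)$), and combine this with the harmonicity just established to conclude $\alpha=0$. I expect this reconciliation to be the main obstacle and the genuinely new point beyond Enoki's theorem. The difficulty is that the harmonicity of $su$ is taken with respect to the singular metric $h_Gh_M$, which is forced on us by the need for semi-positive curvature and by Theorem \ref{Lef}, whereas the exactness furnished by $s\alpha=0$ lives in the smooth-metric model; because $\I{h_Gh_M}=\OX_X(-D)\neq\OX_X$, these two $L^2$-spaces are genuinely distinct, and one cannot simply invoke Proposition \ref{orth} to force $su=0$. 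Indeed $su$ need not vanish: when $\beta\in\Ker\Phi_D$ one has $\alpha=0$ yet $su\neq0$, so the argument must conclude $\Phi_D(\beta)=0$ rather than $su=0$.

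The plan to surmount this is to push the exactness back down from $su$ to $u$, effectively \emph{dividing by $s$} at the level of the smooth-metric $\dbar$-cohomology, by exploiting that both $u=\pm\ast(\ast u)$ and $su=\pm\ast(s\cdot \ast u)$ are Hodge duals of holomorphic forms and by matching the orthogonal decompositions of Propositions \ref{decomp} and \ref{orth} across the metrics $h_G$, $h_Gh_M$ and their smooth counterparts through the diagrams of Proposition \ref{comm}; the outcome should be that the class $\Phi_D(\beta)$, and not merely $su$, is the object that is forced to vanish. The subsidiary technical point, namely that Proposition \ref{BKN} requires a complete K\"ahler form while $\omega$ is incomplete on $Y$ and $h_G$ is singular along $D$, is handled uniformly by the $\omega_\delta$-regularization and weak-limit arguments already developed for Theorem \ref{Lef}, so the emphasis of the proof falls squarely on the cohomological bookkeeping reconciling the singular and smooth $L^2$-models.
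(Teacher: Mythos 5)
Your setup coincides with the paper's: the singular metric $h_Dh_F$ on $D\otimes F$, the De Rham--Weil identification of $\Image \Phi_{D}$ with the classes admitting harmonic representatives for that metric, the use of Theorem \ref{Lef} to make $\ast u$ (hence $u$ and $\ast u/t$) smooth on $X$, and the $D'^{*}$-vanishing $-\ast\dbar\ast(su)=-\ast\dbar(s\cdot\ast u)=0$ are all exactly the paper's ingredients. You have also correctly diagnosed the central obstruction: any harmonicity you can extract for $su$ lives in the singular-metric model (whose multiplier ideal is $\OX_{X}(-D)$), while the exactness furnished by $s\alpha=0$ lives in the smooth-metric model, and since $\beta$ may have a component in $\Ker\Phi_{D}$ one cannot hope to prove $su=0$ for an arbitrary harmonic representative. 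But precisely at this point your proposal stops being a proof: the concluding ``plan'' (matching orthogonal decompositions across $h_G$, $h_Gh_M$ and the smooth metrics so that $\Phi_D(\beta)$ is ``forced to vanish'') is not carried out, and it cannot be carried out as stated. All that your Enoki-type argument can deliver, even if the Bochner step is made rigorous through the $\omega_\delta$-regularization, is that $su$ is harmonic for $g_0h_Fh_M$ and $\dbar$-exact in the smooth model, i.e.\ that $su$ lies in the kernel of the natural map from $\mathcal{H}^{n,q}_{g_0h_Fh_M,\omega}(D\otimes F\otimes M)$ to $H^{q}(X,K_X\otimes D\otimes F\otimes M)$; that kernel is exactly the obstruction space you identified, it is nonzero in general, and membership in it yields no contradiction.

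The paper closes the argument with two ideas that are absent from your proposal. First, the representative is chosen more carefully: with $\phi:\mathcal{H}^{n,q}_{g_0}(D\otimes F)\to \Ker\dbar/\Image\dbar$ of $L^{n,q}_{(2)}(D\otimes F)_{g_{\e}}$, one decomposes $\mathcal{H}^{n,q}_{g_0}(D\otimes F)=\Ker\phi\oplus(\Ker\phi)^{\perp}$ and replaces the harmonic representative $u_1$ of $\beta$ by its component $u\in(\Ker\phi)^{\perp}$. Since $\Ker\phi$ corresponds to $\Ker\Phi_{D}$ under the De Rham--Weil isomorphisms, this makes $\alpha=0$ equivalent to $u=0$ (equivalently to $su=0$), which neutralizes exactly your ``cannot divide by $s$'' objection. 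Second --- and this is the genuinely new technique --- the paper never proves that $su$ is harmonic; instead it interpolates between the two models by the smooth metrics $g_{\e}=g/(|t|^2_g+\e)$ and proves the quantitative divergence estimates $\|v\|_{g_{\e}}=O(-\log\e)$ (Lemma \ref{comp}, Proposition \ref{bdd}) and $\|\dbar^{*}_{g_{\e}}(su)\|^{2}_{g_{\e}}=O(\e(-\log\e))$ (Propositions \ref{f-1} and \ref{f-2}), so that Cauchy--Schwarz applied to $\|su\|^{2}_{g_0}=\lim_{\e\to 0}\lla su,\dbar v\rra_{g_{\e}}$ forces $su=0$. Note moreover that the crucial curvature vanishing (Proposition \ref{f-1}) is itself proved by the $(\Ker\phi)^{\perp}$ trick --- the form $w=\sqrt{-1}\Theta_{g_{\e}h_F}(D\otimes F)\Lambda u=\dbar\,\dbar^{*}_{g_{\e}}u$ lies in $\Image\dbar$ of $L^{n,q}_{(2)}(D\otimes F)_{g_{\e}}$, hence its harmonic part lies in $\Ker\phi$ and is orthogonal to $u$ --- not by the pointwise Bochner argument you sketch, which would in any case require justifying Proposition \ref{BKN} for the incomplete $\omega$ on $Y$ and the singular weight $g_0$. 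Without these two ingredients (the orthogonal projection onto $(\Ker\phi)^{\perp}$ and the $O(-\log\e)$, $O(\e(-\log\e))$ estimates through $g_{\e}$), your argument cannot be completed.
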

\begin{proof}
Let $g$ be a smooth hermitian metric on the line bundle $D$ and 
$t$ be the natural section of the effective divisor $D$. 
Then we define the smooth hermitian metric $g_{\e}$ on the line bundle $D$ by 
$$
\varphi_{\e}:=\frac{1}{2} \log(|t|^2_{g} + \e) \text{\quad and \quad} 
g_{\e}:=g e^{-2 \varphi_{\e}}=g \cdot \Big(\frac{1}{|t|^2_{g} + \e}\Big). 
$$
It is easy to see that 
\begin{itemize}
\item $g_{\e_{2}} \leq g_{\e_{1}}$ for $\e_{1}\leq \e_{2}$, 
\item $g_{\e}$ converges to $g_{0}=h_{D}$ in the point-wise sense 
as $\e$ tends to zero, 
\end{itemize}
where $h_{D}$ is the singular metric defined by the effective divisor $D$ 
(see Example \ref{multi-ex}). 
We have $\I{g_{0}}=\I{h_{D}}=\mathcal{O}_{X}(-D)$ since $D$ is a simple normal crossing divisor. 
Let $\omega$ be a K\"ahler form on $X$, 
and let $h_{F}$ and $h_{M}$ be smooth hermitian metrics satisfying 
the assumptions in Theorem \ref{key}. 
We often omit the subscripts $\omega$, $h_{F}$, and $h_{M}$ 
of the norm, the $L^2$-space, and so on. 
For example, we use the notations 
$$
L^{n,q}_{(2)}(D\otimes F)_{g_{\e}}:=L^{n,q}_{(2)}(D\otimes F)_{g_{\e}h_{F}, \omega} 
\text{ and }
\mathcal{H}^{n,q}_{g_{0}}(D\otimes F):=
\mathcal{H}^{n,q}_{g_{0}h_{F},\omega}(D\otimes F). 
$$

We first consider the following commutative diagram\,: 
\begin{align*}
\xymatrix{
&H^{q}(X, K_{X}\otimes D \otimes F \otimes \I{g_{0}})=H^{q}(X, K_{X}\otimes F)\ar[r]^-{\Phi_{D}}&H^{q}(X, K_{X}\otimes D \otimes F)\\
&\ar[u]^-{ \cong}_-{\overline{f_{0}}}
\dfrac{\Ker \dbar}{\Image \dbar} \text{ of } 
L^{n,q}_{(2)}(D\otimes F)_{g_{0}}
& \ar[u]^-{ \cong}_-{\overline{f_{\e}}}
\dfrac{\Ker \dbar}{\Image \dbar} \text{ of } 
L^{n,q}_{(2)}(D\otimes F)_{g_{\e}}\ar[u]\\
&\ar[u]^-{ \cong}_-{j}\ar[ur]_-{\phi } \mathcal{H}^{n,q}_{g_{0}}(D\otimes F).&  
}
\end{align*}
Here $\overline{f_{0}}$ and $\overline{f_{\e}}$ 
are the De Rham-Weil isomorphisms given in subsection \ref{Sec2-3} 
and $j$ (resp. $\phi$) is the map induced by the natural inclusion 
$\mathcal{H}^{n,q}_{g_{0}}(D\otimes F) \hookrightarrow \Ker \dbar \subset 
L^{n,q}_{(2)}(D\otimes F)_{g_{0}}$ (resp. 
$\mathcal{H}^{n,q}_{g_{0}}(D\otimes F) \hookrightarrow \Ker \dbar \subset 
L^{n,q}_{(2)}(D\otimes F)_{g_{\e}}$). 
For a cohomology class $\alpha$ such that 
$\alpha \in \Image \Phi_{D} \subset H^{q}(X, K_{X}\otimes D \otimes F)$, 
we assume that $s \alpha = 0\in H^{q}(X, K_{X}\otimes D \otimes F \otimes M)$. 
Our goal is to show that the cohomology class $\alpha$ is actually zero 
under this assumption. 
By $\alpha \in \Image \Phi_{D}$, 
there exists a cohomology class 
$\beta \in H^{q}(X, K_{X} \otimes F)$ 
such that $\Phi_{D}(\beta)=\alpha$. 
By the above isomorphisms, 
the cohomology class $\beta$ can be represented by 
the harmonic form $u_{1} \in \mathcal{H}^{n,q}_{g_{0}}(D\otimes F)$ 
(that is, $\beta=\{ u_{1}\}$). 
Since $\mathcal{H}^{n,q}_{g_{0}}(D\otimes F)$ is a finite dimensional 
vector space with the inner product 
$\lla \bullet, \bullet \rra_{g_{0}}:=\lla \bullet, \bullet  \rra_{g_{0}h_{F},\omega}$, 
we have the orthogonal decomposition 
\begin{align}\label{g}
\mathcal{H}^{n,q}_{g_{0}}(D\otimes F)=
\Ker \phi \oplus (\Ker \phi)^{\perp}. 
\end{align}
From this orthogonal decomposition, the harmonic form $u_{1}$ can be decomposed as follows\,: 
$$
u_{1}=u_{2} + u \text{ for some } u_{2} \in \Ker \phi 
\text{ and } u \in (\Ker \phi)^{\perp}.  
$$ 
Then it is easy to see that 
$$
\Phi_{D}(\{u\})=\Phi_{D}(\{u_{2}+u\})=\Phi_{D}(\beta)=\alpha. 
$$
Note that $\{u_{2}+u\}$ is equal to  $\beta$, 
but it is not necessarily equal to $\{u\}$. 
We can see that if we can prove $u = 0$, 
we obtain $\alpha=0$ (the desired conclusion of Theorem \ref{key}). 
Hence our goal is to show $u = 0$. 

By the assumption $\sqrt{-1}\Theta_{h_{F}}(F)\geq 0$, 
the line bundle $G=D\otimes F$ and the singular hermitian metric $h=g_{0}h_{F}$ satisfy 
the assumptions in Theorem \ref{Lef}. 
By applying  Theorem \ref{Lef} for $u$, 
we obtain 
\begin{align}\label{f}
\ast u \in H^{0}(X, \Omega_{X}^{n-q}\otimes D\otimes F \otimes \I{g_{0}}). 
\end{align}
In particular $\ast u $ is smooth on $X$. 
Although $u$ is a priori $D \otimes F$-valued $(n,q)$-form on 
$Y:=X \setminus \Supp D$ (not $X$), 
it follows that $u=(-1)^{n+q}**u$ is smooth on $X$
from (\ref{f}).

\begin{rem}\label{Lef-rem}
(1) It seems to be difficult to show that $u $ is smooth on $X$ 
without using Theorem \ref{Lef}, 
since $g_{0}$ is a singular hermitian metric and $\omega$ is not complete on $Y$.\\
(2) Note that we have $\I{g_{0}}=\mathcal{O}(-D)$ since 
$D$ is a simple normal crossing divisor. 
Therefore $\ast u/t$ is a holomorphic  $F$-valued $(n-q,0)$-form. 
In particular $\ast u/t$ is still smooth on $X$, 
which plays a crucial role later. 
\end{rem}

By the standard De Rham-Weil isomorphism, 
we have 
$$
\Phi_{D} (\{su\})=s\alpha =0 \in H^{q}(X, K_{X}\otimes D \otimes F \otimes M) \cong 
\frac{\Ker \dbar}{\Image \dbar} \text{ of } C^{n,q}_{\infty} 
(D \otimes F \otimes M),  
$$
where $C^{n,q}_{\infty}(D \otimes F \otimes M)$ is the 
set of smooth $D \otimes F \otimes M$-valued $(n,q)$-forms on $X$.
Hence, by the assumption $s\alpha =0$,  
we can take a smooth $D \otimes F \otimes M$-valued $(n,q-1)$-form $v$ 
such that $su=\dbar v$. 
The bounded Lebesgue convergence theorem yields 
$$
\|su \|^2_{g_{0}}= \lim_{\e \to 0}\int_{Y} 
|su |^2_{g_{\e}}\, dV_{\omega} 
= \lim_{\e \to 0}  \lla su, su \rra_{g_{\e}}
$$ 
since $|su |^2_{g_{\e}} \leq |su |^2_{g_{0}}$ 
and $|su |^2_{g_{0}}$ is integrable. 
Therefore, from Cauchy-Schwartz's inequality, 
we obtain 
\begin{align}\label{i}
\|su \|^2_{g_{0}}= \lim_{\e \to 0}  \lla su, su \rra_{g_{\e}}
= \lim_{\e \to 0}  \lla su, \dbar v \rra_{g_{\e}}
\leq \lim_{\e \to 0}  \|\dbar_{g_{\e}}^{*} su \|_{g_{\e}} \| v\|_{g_{\e}}. 
\end{align}

The strategy of the proof of Theorem \ref{key} is as follows\,: 
We will show that $\| v\|_{g_{\e}}=O(- \log \e)$ and 
$\|\dbar_{g_{\e}}^{*} su \|_{g_{\e}} = O(\e (-\log \e))$. 
Then, from inequality (\ref{i}), 
we obtain $\|su \|^2_{g_{0}}=0$ (that is, $su=0$). 
This completes the proof.

We first check the following lemma. 
\begin{lemm}\label{comp}
Let $(z_{1},z_{2},\dots, z_{n})$ be the standard coordinate of 
$\mathbb{C}^{n}$ and $B$ be an open ball containing the origin. 
Then, for every $1 \leq k \leq n$, we have 
$$
\int_{B} \frac{1}{\e + |z_{1}z_{2}\cdots z_{k}|^2}= 
O(-\log \e). 
$$ 
\end{lemm}
\begin{proof}
By the variable change $z_{i}=r_{i}e^{\sqrt{-1}\theta_{i}}$, 
the problem can be reduced to showing
$$
\int_{0 \leq r_{1} \leq 1} \int_{0 \leq r_{2} \leq 1}\dots 
\int_{0 \leq r_{k} \leq 1}
\frac{r_{1}r_{2}\cdots r_{k} }{\e + |r_{1}r_{2}\cdots r_{k}|^2}
dr_{1} dr_{2}\cdots dr_{k} = O(-\log \e). 
$$
Further, by using the polar coordinate, 
we can obtain the conclusion from the following computation\,:
\begin{align*}
\int_{0 \leq R \leq 1} 
\frac{R^{2k-1} }{\e + R^{2k}}dR 
= \frac{1}{2k}(\log(\e+1)-\log \e). 
\end{align*}
\end{proof}

By Lemma \ref{comp}, we can easily obtain the following proposition. 
In the proof of the following proposition, 
we essentially use the fact that $v$ is smooth on $X$. 

\begin{prop}\label{bdd}
$\| v\|_{g_{\e}}=O(- \log \e)$. 
\end{prop}
\begin{proof}
By the definition of $g_{\e}$, 
we can see that 
$$
\| v\|^2_{g_{\e}}=\int_{X}|v|^2_{g} \frac{1}{\e + |t|^2_{g}} dV_{\omega} 
\leq \sup_{X}|v|^2_{g} \int_{X} \frac{1}{\e + |t|^2_{g}} dV_{\omega}.  
$$
It follows that  $\sup_{X}|v|^2_{g}$ is finite since $v$ and $g$ 
are smooth on $X$. 
Since $D={\rm{div}}\,t$ is a simple normal crossing divisor, 
we can obtain the conclusion by Lemma \ref{comp}. 
\end{proof}

It remains to show that 
$$
\|\dbar_{g_{\e}}^{*} su \|_{g_{\e}} = O(\e (-\log \e)).
$$
By applying Proposition \ref{BKN} for $su$, $g_{\e}$, and $\omega$, 
we obtain  
\begin{align}\label{e}
\|\dbar_{g_{\e}}^{*} su \|^2_{g_{\e}}= 
\|D_{g_{\e}}'^* su \|^2_{g_{\e}} + 
\lla \sqrt{-1}\Theta_{g_{\e}h_{F}h_{M}}(D\otimes F \otimes M) 
\Lambda su, su \rra_{g_{\e}},  
\end{align}
where $D_{g_{\e}}'^*$ (resp. $\dbar_{g_{\e}}^{*}$) is 
the Hilbert space adjoint of the $(1,0)$-part $D_{g_{\e}}'$ 
(resp. the $(0,1)$-part $\dbar$)
of the Chern connection $D_{g_{\e}}=D_{g_{\e}}'+\dbar$, 
and $\Lambda$ is the adjoint operator of the wedge product $\omega \wedge \bullet$. 
Here we used that $\dbar su=s \dbar u=0$.

We consider the first term $\|D_{g_{\e}}'^* su \|_{g_{\e}}$ of the right hand side of (\ref{e}).  
It follows that $D_{g_{\e}}'^*=-*\dbar * $ since $X$ is compact and $\omega$ is defined on $X$. 
We have $\dbar * u=0$ by (\ref{f}) (see Theorem \ref{Lef}), 
and thus we obtain 
\begin{align}\label{j}
D_{g_{\e}}'^* su = 
-*\dbar * su =
-*\dbar s* u =
-*s \dbar * u =0.  
\end{align}
In particular we can see $\|D_{g_{\e}}'^* su \|_{g_{\e}}=0$. 

The problem is the second term of the right hand side of (\ref{e}). 
We can obtain  
\begin{align*}
\sqrt{-1}\Theta_{g_{\e}}(D)&= 
\e \frac{1} {|t|^2_{g}+\e}\sqrt{-1}\Theta_{g}(D)+
\e  \frac{D'_{g}t \wedge \overline{D'_{g}t} }{(|t|^2_{g}+\e)^2}, 
\end{align*}
where $D'_{g}$ is the $(1,0)$-part of the Chern connection $D_{g}$. 
The above equality follows from simple computations, 
but we will check it for reader's convenience. 
By the definition of $g_{\e}$ and the Chern connection, we have 
\begin{align*}
&\sqrt{-1}\Theta_{g_{\e}}(D)\\
=&\sqrt{-1}\Theta_{g}(D)+\deldel \log (|t|^2_{g}+\e)\\
=&\sqrt{-1}\Theta_{g}(D)+ 
\frac{-\sqrt{-1}}{(|t|^2_{g}+\e)^2}
\langle D'_{g}t, t \rangle_g \wedge \langle t, D'_{g}t \rangle_g + 
\frac{\sqrt{-1}}{|t|^2_{g}+\e}\bigg(
\langle D'_{g}t, D'_{g}t \rangle_g + 
\langle t, \dbar D'_{g}t \rangle_g
\bigg). 
\end{align*}
Further, by easy computations, 
we have  
\begin{align*}
\langle D'_{g}t, t \rangle_g \wedge \langle t, D'_{g}t \rangle_g 
= \langle D'_{g}t, D'_{g}t \rangle_g |t|_g^2 \quad \text{ and } \quad 
\sqrt{-1}\langle t, \dbar D'_{g}t \rangle_g= 
-\sqrt{-1}\Theta_{g}(D)|t|_g^2. 
\end{align*}
These equalities lead to the desired equality. 

Now we compute the negativity of the curvature 
$\sqrt{-1}\Theta_{g_{\e}}(D)$. 
By the above equality, we have 
\begin{align*}
\sqrt{-1}\Theta_{g_{\e}}(D) \geq \e \frac{1} {|t|^2_{g}+\e}\sqrt{-1}\Theta_{g}(D).  
\end{align*}
On the other hand, 
there exists a positive constant $C$ such that $\sqrt{-1}\Theta_{g}(D) \geq -C \omega$ on $X$ 
since $X$ is compact and $g$ is smooth on $X$. 
For the positive $(1,1)$-form $A_{\e}$ defined by 
$$
A_{\e}:= \e \frac{C} {|t|^2_{g}+\e} \omega \geq 0, 
$$
we have  
$$
\sqrt{-1}\Theta_{g_{\e}}(D) + A_{\e} \geq 0. 
$$
Then we can see that  
\begin{align*}
&\lla \sqrt{-1}\Theta_{g_{\e}h_{F}h_{M}}(D\otimes F \otimes M) 
\Lambda su, su \rra_{g_{\e}} \\ \leq& 
\lla (\sqrt{-1}\Theta_{g_{\e}h_{F}h_{M}}(D\otimes F \otimes M) + A_{\e})
\Lambda su, su \rra_{g_{\e}} \\ \leq& 
\sup_{X}|s|^2_{h_{M}}
\lla (\sqrt{-1}\Theta_{g_{\e}h_{F}h_{M}}(D\otimes F \otimes M) + A_{\e})
\Lambda u, u \rra_{g_{\e}} \\ \leq& 
\sup_{X}|s|^2_{h_{M}}
\lla (\sqrt{-1}\Theta_{g_{\e}h_{F}h_{M}}(D\otimes F \otimes M) + A_{\e})
\Lambda u, u \rra_{g_{0}}. 
\end{align*}
The first inequality is obtained from $A_{\e} \geq 0$, 
the second inequality is obtained from $\sqrt{-1}\Theta_{g_{\e}}(D)+A_{\e} \geq 0$, 
and the third inequality is obtained from $g_{\e} \leq g_{0}$. 
Further, by the assumption 
$\sqrt{-1}\Theta_{h_{F}}(F) \geq t \sqrt{-1}\Theta_{h_{M}}(M)$, 
we can see that 
\begin{align*}
\sqrt{-1}\Theta_{g_{\e}h_{F}h_{M}}(D\otimes F \otimes M) + A_{\e} 
&\leq \sqrt{-1}\Theta_{g_{\e}}(D) + A_{\e}+
(1+\frac{1}{t})\sqrt{-1}\Theta_{h_{F}}(F)\\
&\leq (1+\frac{1}{t}) (\sqrt{-1}\Theta_{g_{\e}}(D) + A_{\e}+
\sqrt{-1}\Theta_{h_{F}}(F)). 
\end{align*}
Here we used $\sqrt{-1}\Theta_{g_{\e}}(D)+A_{\e} \geq 0$ to obtain the second inequality. 
In summary, we have 
\begin{align*}
&\lla \sqrt{-1}\Theta_{g_{\e}h_{F}h_{M}}(D\otimes F \otimes M) 
\Lambda su, su \rra_{g_{\e}} \\ \leq& 
\sup_{X}|s|^2_{h_{M}}\, (1+\frac{1}{t})\, 
\lla (\sqrt{-1}\Theta_{g_{\e}}(D) + A_{\e}+
\sqrt{-1}\Theta_{h_{F}}(F))
\Lambda u, u \rra_{g_{0}}. 
\end{align*}
For the proof of Theorem \ref{key}, 
it is sufficient to estimate the order of the right hand side. 

\begin{prop}[cf. {\cite[Proposition 3.8]{Tak97}}]\label{f-1}
Under the above situation, we have 
$$
\lla (\sqrt{-1}\Theta_{g_{\e}}(D) +\sqrt{-1}\Theta_{h_{F}}(F))
\Lambda u, u \rra_{g_{0}}=0.
$$ 
\end{prop}
\begin{proof}
For simplicity, 
we put 
$$
w:=\sqrt{-1}\Theta_{g_{\e}h_{F}}(D\otimes F)\Lambda u =
(\sqrt{-1}\Theta_{g_{\e}}(D) +\sqrt{-1}\Theta_{h_{F}}(F))
\Lambda u. 
$$
Then it follows that $w \in L^{n,q}_{(2)}(D\otimes F)_{g_{0}}$ 
since the metric $g_{\e}h_{F}$ is smooth on $X$ and $u \in L^{n,q}_{(2)}(D\otimes F)_{g_{0}}$.
Indeed, 
there is a positive constant $C$ such that 
$-C \omega \leq \sqrt{-1}\Theta_{g_{\e}h_{F}}(D\otimes F) \leq C \omega$. 
Then we have $|w|_{g_{0}} \leq C q |u|_{g_{0}}$, and thus 
we can see that $w \in L^{n,q}_{(2)}(D\otimes F)_{g_{0}}$
by $u \in L^{n,q}_{(2)}(D\otimes F)_{g_{0}}$. 
Further, by $u \in \mathcal{H}^{n,q}_{g_{0}}(D \otimes F)$ and (\ref{j}), 
we have $\dbar u=0$ and $D'^*_{g_{\e}}u=0$. 
Therefore we obtain  
$$
\dbar \, \dbar^{*}_{g_{\e}} u = 
\sqrt{-1}\Theta_{g_{\e}h_{F}}(D\otimes F)\Lambda u=w
$$
from Proposition \ref{BKN}. 
In particular, 
we can see that 
$w \in \Ker \dbar \subset L^{n,q}_{(2)}(D\otimes F)_{g_{0}}$.

By (\ref{g}), we have the orthogonal decomposition 
$$
\Ker \dbar = \Image \dbar \oplus \Ker \phi \oplus (\Ker \phi)^{\perp} 
 \text{ in }  L^{n,q}_{(2)}(D\otimes F)_{g_{0}}, 
$$
and thus $w$ can be decomposed as follows\,: 
$$
w=w_{1} + w_{2} + w_{3}\text{ for some } 
w_{1} \in \Image \dbar,  
w_{2} \in \Ker \phi 
\text{, and } w_{3} \in (\Ker \phi)^{\perp}. 
$$
Since we have $u \in (\Ker \phi)^{\perp}$ by the construction of $u$, 
we obtain $\lla w, u \rra_{g_{0}}=\lla w_{3}, u \rra_{g_{0}}$. 
It is sufficient for the proof to show that $w_{3}$ is zero. 
It follows that  $\dbar^{*}_{g_{\e}} u \in L^{n,q}_{(2)}(D\otimes F)_{g_{\e}}$
since $\dbar^{*}_{g_{\e}} u$ is smooth on $X$. 
(Note that we do not know whether 
$\dbar^{*}_{g_{\e}} u \in L^{n,q}_{(2)}(D\otimes F)_{g_{0}}$.) 
By combining with $\dbar \, \dbar^{*}_{g_{\e}} u =w$, 
we can conclude that 
$$
w_{2} + w_{3} = w - w_{1} \in \Image \dbar \subset 
L^{n,q}_{(2)}(D\otimes F)_{g_{\e}}, 
$$ 
and thus we obtain $w_{2} + w_{3} =w - w_{1} \in \Ker \phi$. 
In particular we can see $w_{3}=0$. 
This completes the proof. 
\end{proof}

Finally we prove the following proposition.

\begin{prop}\label{f-2}
Under the above situation, we have 
$$
\lla A_{\e}\Lambda u, u \rra_{g_{0}}=O(\e (- \log \e)). 
$$
\end{prop}
\begin{proof}
By Remark \ref{Lef-rem} (which is obtained from Theorem \ref{Lef}), 
we see that $|u|_{g_{0}}$ is a bounded function on $X$. 
By the definition of $A_{\e}$, 
we can easily see that 
\begin{align*}
\lla A_{\e}\Lambda u, u \rra_{g_{0}} &= 
\e \int_{Y} \frac{Cq} {|t|^2_{g}+\e} |u|^2_{g_{0}}\, dV_{\omega} \leq 
\e \sup_{X}|u|^2_{g_{0}} \int_{Y} \frac{Cq} {|t|^2_{g}+\e} \, dV_{\omega}. 
\end{align*}
By Lemma \ref{comp}, we obtain the conclusion. 
\end{proof}

\begin{rem}\label{important}
The integral in Lemma \ref{comp} naturally appears 
when we prove Proposition \ref{bdd} and Proposition \ref{f-2}, 
but the reasons why the integral appears are different. 
The integral in Proposition \ref{bdd} comes from the definition of $g_{\e}$.  
On the other hand, the same integral comes 
from the curvature of $g_{\e}$ when we prove Proposition \ref{f-2}. 
\end{rem}

By Proposition \ref{bdd}, Proposition \ref{f-2}, and inequality (\ref{i}), 
we complete the proof of Theorem \ref{key}.  
\end{proof}

\subsection{Proof of Theorem \ref{main}}\label{Sec3-3}

In this subsection, 
we prove that Theorem \ref{key} leads to Theorem \ref{main}. 
In particular, 
Conjecture \ref{F-conj} is affirmatively solved for plt pairs 
(see Corollary \ref{main-co}).  

\begin{theo}[=Theorem \ref{main}]\label{mainn}
Let $D$ be a simple normal crossing divisor on a compact K\"ahler manifold $X$. 
Let $F$ $($resp. $M$$)$ be a $($holomorphic$)$ line bundle on $X$ 
with a smooth hermitian metric $h_{F}$  $($resp. $h_{M}$$)$ 
such that 
\begin{equation*}
\sqrt{-1}\Theta_{h_M}(M)\geq 0 \  \text{ and }\ 
\sqrt{-1}(\Theta_{h_F}(F)-t \Theta 
_{h_M}(M))\geq 0 
\ \text{ for some }t>0. 
\end{equation*}
We assume that the pair $(X,D)$ is a plt pair. 
Let $s$ be a $($holomorphic$)$ section of $M$  
such that the zero locus $s^{-1}(0)$ 
contains no lc centers of the lc pair $(X,D)$.
Then, the multiplication map induced by the tensor product with $s$
\begin{equation*}
H^q(X, K_X\otimes D \otimes F)
\xrightarrow{\otimes s } 
H^q(X, K_X\otimes D \otimes F\otimes M)
\end{equation*} 
is injective for every $q$. 
\end{theo}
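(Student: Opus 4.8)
The plan is to deduce Theorem \ref{mainn} from Theorem \ref{key} by peeling off the boundary divisor $D$ with an adjunction exact sequence, the plt hypothesis guaranteeing that the boundary itself carries no further singularities. Write $D = \sum_{i \in I} D_i$ for the irreducible decomposition; the plt assumption means precisely that the $D_i$ are pairwise disjoint smooth divisors, so $D$ is the disjoint union $\coprod_i D_i$ and adjunction gives $(K_X \otimes D)|_{D_i} = K_{D_i}$ for every $i$. First I would tensor the residue sequence $0 \to \mathcal{O}_X \to \mathcal{O}_X(D) \to \mathcal{O}_X(D)|_D \to 0$ with $K_X \otimes F$ to obtain the short exact sequence of sheaves
$$
0 \longrightarrow K_X \otimes F \longrightarrow K_X \otimes D \otimes F \longrightarrow \bigoplus_{i \in I} K_{D_i} \otimes F|_{D_i} \longrightarrow 0.
$$
Passing to the long exact sequence in cohomology and using $H^q(D, -) = \bigoplus_i H^q(D_i, -)$, I obtain a restriction map $r_F \colon H^q(X, K_X \otimes D \otimes F) \to \bigoplus_i H^q(D_i, K_{D_i} \otimes F|_{D_i})$, and exactness identifies $\Image \Phi_D = \Ker r_F$.

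Next I would take a class $\alpha \in H^q(X, K_X \otimes D \otimes F)$ with $s\alpha = 0$ and aim to show $\alpha = 0$. Since restriction commutes with multiplication by the global section $s$, the square relating $r_F$ and $r_{F \otimes M}$ through the vertical maps $\otimes s$ commutes, whence $s \cdot r_F(\alpha) = r_{F \otimes M}(s\alpha) = 0$ in $\bigoplus_i H^q(D_i, K_{D_i} \otimes F \otimes M|_{D_i})$. On each component $D_i$ the restricted metrics $h_F|_{D_i}$ and $h_M|_{D_i}$ still satisfy the curvature hypotheses (restriction preserves semi-positivity), $D_i$ is again a compact K\"ahler manifold, and the section $s|_{D_i}$ is not identically zero: this is exactly where the lc-center hypothesis enters, since $s^{-1}(0)$ contains no $D_i$ by Remark \ref{main-rem}. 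Hence Theorem \ref{key} applied to $(D_i, 0)$ with empty boundary (so that $\Phi_{0}$ is the identity and $\Image \Phi_{0}$ is the whole group, recovering Enoki's injectivity) shows that $\otimes s|_{D_i}$ is injective on $H^q(D_i, K_{D_i} \otimes F|_{D_i})$. Therefore $r_F(\alpha) = 0$, i.e. $\alpha \in \Ker r_F = \Image \Phi_D$.

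Finally I would invoke Theorem \ref{key} on $X$ itself: since $\alpha$ now lies in $\Image \Phi_D$ and $s\alpha = 0$, the injectivity of $\otimes s$ on $\Image \Phi_D$ forces $\alpha = 0$, which completes the proof.

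I do not expect a serious technical obstacle in this reduction itself; the only points that require care are the identification of the boundary cohomology via adjunction and the verification that $s|_{D_i} \not\equiv 0$, so that the component-wise injectivity applies. The genuine conceptual crux -- and the reason the argument does not extend to general lc pairs -- is the disjointness of the $D_i$. It is disjointness that makes $(K_X \otimes D)|_{D_i}$ the honest canonical bundle $K_{D_i}$ rather than an adjoint bundle $K_{D_i} \otimes (D - D_i)|_{D_i}$ of a nontrivial boundary; the boundary pieces are thereby smooth (hence klt), and injectivity on them is already supplied by Theorem \ref{key}. If the components met, the restriction to $D_i$ would again carry an lc, non-klt boundary, and one would be confronted with precisely the lc difficulty one is trying to resolve.
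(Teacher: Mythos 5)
Your proposal is correct and follows essentially the same route as the paper: restrict to the pairwise disjoint plt components $D_i$, use Enoki-type injectivity there (the paper cites Enoki's theorem directly, while you recover it from Theorem \ref{key} applied with empty boundary, which is legitimate), conclude $\alpha \in \Image \Phi_{D}$ from the long exact sequence of the residue/adjunction sequence, and finish with Theorem \ref{key} on $X$. The only cosmetic difference is that the paper runs one exact sequence per component (twisted by $\hat{D_i}$, a formulation chosen with Observation \ref{toward-conj} in mind) and then combines them through the sequence for the full boundary $D$, whereas you work with the single restriction map to $D = \coprod_i D_i$ at once.
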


\begin{proof}
Let $D=\sum_{i \in I}D_{i}$ be the irreducible decomposition of $D$. 
We remark that $D_{i} \cap D_{j} = \emptyset$ for $i \not = j$ 
since $(X,D)$ is a plt pair. 
For every $i \in I$, 
we consider the long exact sequence 
induced by the standard short exact sequence\,: 
\begin{align}\label{h}
\vcenter{ \xymatrix{
&\ar[d] & \ar[d]\\
&H^q(X,\mathcal{O}_{X}(K_{X}\otimes F \otimes \hat{D_{i}}))
\ar[d]^-{\Phi_{D_{i}}} \ar[r]^-{\otimes s} 
&H^q(X, \mathcal{O}_{X}(K_{X}  \otimes F \otimes \hat{D_{i}}\otimes M))\ar[d]\\ 
&H^q(X, \mathcal{O}_{X}(K_{X}\otimes D \otimes F))
\ar[d]^-{r_i}\ar[r] ^-{\otimes s}
&H^q(X, \mathcal{O}_{X}(K_{X}\otimes D \otimes F\otimes M)) \ar[d]\\ 
&H^q(D_{i}, \mathcal{O}_{D_{i}}(K_{D_{i}}\otimes F \otimes \hat{D_{i}}))
\ar[d]\ar[r]^-{f_{i}:=\otimes s|_{D_{i}} } 
& H^q(D_{i}, \mathcal{O}_{D_{i}}(K_{D_{i}}\otimes F \otimes \hat{D_{i}}\otimes M))\ar[d]\\ 
& & 
}}
\end{align}
Here $\hat{D_{i}}$ is the divisor defined by 
$\hat{D_{i}}:= \sum_{k\in I, k \not = i} D_{k}$ and 
$f_{i}$ is the multiplication map induced by the tensor product 
with the restriction $s|_{D_{i}}$ of $s$ to $D_{i}$. 
Further $\Phi_{D_{i}}$ is the map induced by the natural inclusion 
$\OX_{X} \hookrightarrow \OX_{X}(D_{i})$ and 
$r_{i}$ is the map induced by the restriction map 
$\OX_{X} \to \OX_{D_{i}}$. 
Note that we used the adjunction formula 
$\mathcal{O}_{D_{i}}(K_{X}\otimes D_{i})=\mathcal{O}_{D_{i}}(K_{D_{i}})$. 

\begin{rem}\label{after}
By the assumption $D_{i} \cap D_{j} = \emptyset$, 
we actually have 
$\mathcal{O}_{D_{i}}(K_{D_{i}}\otimes F \otimes \hat{D_{i}})=
\mathcal{O}_{D_{i}}(K_{D_{i}}\otimes F )$, 
but we used the notation 
$\mathcal{O}_{D_{i}}(K_{D_{i}} \otimes F \otimes \hat{D_{i}})$ 
for Observation \ref{toward-conj}. 
\end{rem}

Let $\alpha$ be a cohomology class in 
$H^q(X, \mathcal{O}_{X}(K_{X}\otimes D \otimes F))$ 
such that $s \alpha =0 \in 
H^q(X, \mathcal{O}_{X}(K_{X}\otimes D \otimes F\otimes M))$. 
The above commutative diagram implies that $f_{i}( r_{i}(\alpha))=0$. 
Note that we have 
$\mathcal{O}_{D_{i}}(K_{D_{i}}\otimes F \otimes \hat{D_{i}})=
\mathcal{O}_{D_{i}}(K_{D_{i}}\otimes F )$
by the assumption $D_{i} \cap D_{j} = \emptyset$. 
The restriction $\mathcal{O}_{D_{i}}(F)$ is a semi-positive line bundle on $D_{i}$ 
since $F$ is semi-positive, 
and further the restriction $s|_{D_{i}}$ is non-zero since 
the zero locus $s^{-1}(0)$ does not contain $D_{i}$ 
by the assumption.  
In particular $\mathcal{O}_{D_{i}}(F)$ and $s|_{D_{i}}$ satisfy 
the assumptions of Enoki's injectivity theorem, 
and thus the multiplication map $f_{i}$ is injective. 
Therefore we obtain $r_{i}(\alpha)=0$ for every $i \in I$. 

We have the following exact sequence\,:
\begin{equation*}
\cdots \to 
H^{q}(X, K_{X} \otimes F) 
\xrightarrow{\Phi_{D} } 
H^{q}(X, K_{X} \otimes D \otimes F )
\xrightarrow{r_{D}} 
H^{q}(X, \mathcal{O}_{D}(K_{X} \otimes D \otimes F) )
\to \cdots, 
\end{equation*}
where $r_{D}$ is the map induced 
by the restriction map $\OX_{X} \to \OX_{D}$. 
On the other hand, we have 
$$
H^{q}(X, \mathcal{O}_{D}(K_{X} \otimes D \otimes F) )=
\bigoplus_{i \in I}H^{q}(X, \mathcal{O}_{D_{i}}(K_{D_{i}} \otimes F) )
$$
by the assumption $D_{i} \cap D_{j} = \emptyset$. 
Then we can easily check $r_{D}(\alpha)=0$ by the above exact sequence 
since we have $r_{i}(\alpha)=0$ for every $i \in I$. 
Therefore Theorem \ref{key} leads to the desire conclusion $\alpha=0$ 
of Theorem \ref{main}. 
\end{proof}

Corollary \ref{main-co} is easily obtained from Theorem \ref{main}. 
Indeed, the hermitian line bundle $(M, h_{M}):=(F^{m}, h_{F}^{m})$ 
satisfies the assumption 
$\sqrt{-1}\Theta_{h_{F}}(F) \geq (1/m)\sqrt{-1}\Theta_{h_{M}}(M)$ in Theorem \ref{main}.

\subsection{Open problems related to Conjecture \ref{F-conj}}\label{Sec3-4}

In this subsection, 
we give several open problems related to Conjecture \ref{F-conj}. 

We first consider a generalization of Theorem \ref{Lef}. 
For Conjecture \ref{F-conj}, 
our formulation of Theorem \ref{Lef} seems to be enough, 
but it is an interesting problem 
to remove the technical assumption in Theorem \ref{Lef}. 

\begin{prob}\label{P-Lef}
Consider the same situation as in Theorem \ref{Lef}. 
Then can we remove the assumption that $h$ is smooth on 
a non-empty Zariski open set? 
\end{prob}

In \cite{DPS01}, 
it has been shown that the map defined by 
the wedge product $\omega^{q}\wedge \bullet$
$$
H^{0}(X, \Omega_{X}^{n-q}\otimes G \otimes \I{h}) 
\xrightarrow{\ \omega^{q}\wedge \bullet \ }
H^{q}(X, K_{X}\otimes G \otimes \I{h})
$$
is surjective without the assumption that $h$ is smooth on 
a non-empty Zariski open set. 
We remark that Problem \ref{P-Lef} leads to the above result. 
Problem \ref{P-Lef} can be seen as a refinement of \cite[Theorem 0.1]{DPS01}.

The following problem may give a strategy to solve Conjecture \ref{F-conj}. 

\begin{prob}\label{to-conj}
Let $D$ be a simple normal crossing divisor on a compact K\"ahler manifold $X$ 
and $F$ be a semi-positive line bundle on $X$. 
Let $s$ be a $($holomorphic$)$ section of  
$\mathcal{O}_{D}(F^m)$ restricted to the $($possibly non-irreducible$)$ variety $D$. 
Then, is the following multiplication map injective?
\begin{equation*}
H^{q}(D, \mathcal{O}_{D}(K_{X} \otimes D \otimes F)) 
\xrightarrow{\otimes s} 
H^{q}(D, \mathcal{O}_{D}({K_{X} \otimes D \otimes F^{m+1}})). 
\end{equation*}
\end{prob}

\begin{rem}\label{rem-to-conj}
When $X$ is a smooth projective variety and $F$ is a semi-ample line bundle on $X$, 
Problem \ref{to-conj} has already proved. 
By Theorem \ref{key} and the proof of Theorem \ref{main}, 
we can see that if Problem \ref{to-conj} is affirmatively solved, 
we can prove Conjecture \ref{to-conj}. 
\end{rem}

Finally, in order to clarify what is needed for Conjecture \ref{F-conj}, 
we attempt to prove Conjecture \ref{F-conj} 
by the induction on $n=\dim X$. 
\begin{step}[Observation for Conjecture \ref{F-conj}]\label{toward-conj}
In the case $D=0$, Conjecture \ref{F-conj} is the same as Enoki's injectivity theorem, 
and thus we may assume that $D \not= 0$. 
When $n$ is one, the conclusion of Conjecture \ref{F-conj} is obvious 
since $D\otimes F$ is ample. 
Hence we may assume that Conjecture \ref{F-conj} holds 
for compact K\"ahler manifolds of dimension $(n-1)$.

We consider the commutative diagram (\ref{h}) in the proof of Theorem \ref{main}. 
We remark that the pair $(D_{i}, \hat{D_{i}})$ is an lc pair. 
Since the zero locus $s^{-1}(0)$ contains no lc centers of $(X,D)$, 
we can show that the restriction $s|_{D_{i}}$ contains no lc centers of $(D_{i}, \hat{D_{i}})$. 
Further the restriction $\mathcal{O}_{D_{i}}(F)$ is a semi-positive line bundle on $D_{i}$.  
Therefore the multiplication map $f_{i}$ in (\ref{h}) is injective 
by the induction hypothesis.

For a cohomology class $\alpha$ in 
$H^q(X, \mathcal{O}_{X}(K_{X}\otimes D \otimes F))$ 
such that $s \alpha =0 \in 
H^q(X, \mathcal{O}_{X}(K_{X}\otimes D \otimes F\otimes M))$, 
we have $f_{i}(r_{i}(\alpha))=0$. 
Then it follows that  $r_{i}(\alpha) =0$ for every $i \in I$ 
since $f_{i}$ is injective. 
In the case of plt pairs, 
we have $D_{i} \cap D_{j} = \emptyset$ for $i \not = j$. 
Then we can obtain $r_{D}(\alpha)=0$ from $r_{i}(\alpha) =0$  
(see the proof of Theorem \ref{main}). 
If we can show that $r_{D}(\alpha)=0$ in the case of lc pairs, 
Conjecture \ref{F-conj} is affirmatively solved by Theorem \ref{key}. 
However we do not know whether we can conclude $r_{D}(\alpha)=0$ from $r_{i}(\alpha) =0$ 
in this case. 
\end{step}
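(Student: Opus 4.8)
The plan is to prove Conjecture \ref{F-conj} by induction on $n = \dim X$, upgrading the plt argument used in the proof of Theorem \ref{main} so that the injectivity of each $f_i$ is supplied by the induction hypothesis rather than by Enoki's theorem. The base cases are immediate: if $D = 0$ the statement is Enoki's injectivity theorem, and if $n = 1$ then $D \otimes F$ is ample and the conclusion is classical. So I would assume $D \neq 0$, $n \geq 2$, and that Conjecture \ref{F-conj} holds on every compact K\"ahler manifold of dimension $\leq n-1$. Fixing a class $\alpha \in H^q(X, K_X \otimes D \otimes F)$ with $s\alpha = 0$, I consider, for each component $D_i$, the commutative diagram (\ref{h}), whose vertical maps are $\Phi_{D_i}$, the restriction $r_i$, and $f_i = \otimes\, s|_{D_i}$. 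Chasing this diagram gives $f_i(r_i(\alpha)) = 0$, so it suffices to know that each $f_i$ is injective in order to deduce $r_i(\alpha) = 0$.

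To invoke the induction hypothesis, I would verify that $f_i$ is exactly the map appearing in Conjecture \ref{F-conj} for the lower-dimensional pair $(D_i, \hat{D_i}|_{D_i})$. Indeed $D_i$ is a compact K\"ahler manifold of dimension $n-1$, the divisor $\hat{D_i}|_{D_i} = \sum_{k \neq i}(D_i \cap D_k)$ is simple normal crossing, so $(D_i, \hat{D_i}|_{D_i})$ is lc, and $F|_{D_i}$ is again semi-positive. By the adjunction $\mathcal{O}_{D_i}(K_X \otimes D_i) = \mathcal{O}_{D_i}(K_{D_i})$, the source of $f_i$ is $H^q(D_i, K_{D_i} \otimes \hat{D_i} \otimes F)$ and $f_i$ is multiplication by $s|_{D_i}$, with $M|_{D_i}$ in the role of the positive multiple. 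The crucial observation is that the lc centers of $(D_i, \hat{D_i}|_{D_i})$ — namely $D_i$ itself and the deeper strata $D_i \cap D_{j_1} \cap \cdots \cap D_{j_\ell}$ — are all lc centers of $(X, D)$; since $s^{-1}(0)$ contains no lc center of $(X, D)$, the restriction $s|_{D_i}$ is nonzero and contains no lc center of $(D_i, \hat{D_i}|_{D_i})$. Hence the induction hypothesis applies, $f_i$ is injective, and $r_i(\alpha) = 0$ for every $i$.

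It remains to promote the vanishings $r_i(\alpha) = 0$ to the single vanishing $r_D(\alpha) = 0$ for the restriction $r_D \colon H^q(X, K_X \otimes D \otimes F) \to H^q(X, \mathcal{O}_D(K_X \otimes D \otimes F))$ to the whole divisor; once this holds, exactness of $H^q(X, K_X \otimes F) \xrightarrow{\Phi_D} H^q(X, K_X \otimes D \otimes F) \xrightarrow{r_D} H^q(X, \mathcal{O}_D(K_X \otimes D \otimes F))$ places $\alpha \in \Image \Phi_D$, and Theorem \ref{key} (which needs no plt hypothesis) forces $\alpha = 0$. To reach $r_D(\alpha) = 0$ I would instead target the stronger divisorial statement of Problem \ref{to-conj} — injectivity of $\otimes s$ on $H^q(D, \mathcal{O}_D(K_X \otimes D \otimes F))$ — which by Remark \ref{rem-to-conj} already implies the Conjecture. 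The natural attack is a secondary induction on the number of components through the Mayer--Vietoris sequence $0 \to \mathcal{O}_D \to \mathcal{O}_{D_1} \oplus \mathcal{O}_{D'} \to \mathcal{O}_{D_1 \cap D'} \to 0$ with $D' = \hat{D_1}$: each of $D_1$, $D'$, and $D_1 \cap D'$ has fewer components or smaller dimension, so the induction provides injectivity of $\otimes s$ on all three, after which one runs a diagram chase on the induced map of long exact cohomology sequences.

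The main obstacle is precisely this final diagram chase. To conclude injectivity on the middle term $H^q(D, \cdots)$ from injectivity on its neighbors, the four-lemma additionally demands a \emph{surjectivity} statement for $\otimes s$ one step to the left, on $H^{q-1}(D_1) \oplus H^{q-1}(D')$, and injectivity theorems supply no surjectivity. This is exactly the gap that evaporates in the plt case: there $D_1 \cap D' = \emptyset$, the intersection term vanishes, the sequence degenerates into the direct-sum decomposition $\mathcal{O}_D = \bigoplus_i \mathcal{O}_{D_i}$, and $r_i(\alpha) = 0$ for all $i$ gives $r_D(\alpha) = 0$ at once. For genuine lc pairs the overlaps $D_i \cap D_j$ are nonempty, the connecting maps need not vanish, and bridging this surjectivity gap — whether by an auxiliary vanishing theorem on the strata, or by a degeneration of the Mayer--Vietoris spectral sequence compatible with $\otimes s$ — is the crux on which the full Conjecture \ref{F-conj} hinges.
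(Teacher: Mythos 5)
Your proposal follows essentially the same route as the paper's own observation: induction on $\dim X$ with Enoki's theorem and the $n=1$ case as base cases, the diagram (\ref{h}) giving $f_{i}(r_{i}(\alpha))=0$, injectivity of $f_{i}$ from the induction hypothesis (justified, as in the paper, by noting that lc centers of $(D_{i}, \hat{D_{i}}|_{D_{i}})$ are lc centers of $(X,D)$ and that $F|_{D_{i}}$ stays semi-positive), and then the reduction to Theorem \ref{key} once $r_{D}(\alpha)=0$ is known, with the passage from $r_{i}(\alpha)=0$ to $r_{D}(\alpha)=0$ correctly identified as the unresolved gap in the genuine lc case. Your additional Mayer--Vietoris/four-lemma analysis pinpointing the missing surjectivity is a sharper diagnosis of that gap than the paper gives, but it does not change the substance: both arguments terminate at exactly the same open point.
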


\newpage

\end{document}